\newlength{\tempfigdim}
\theoremstyle{definition}
\newtheorem{theorem}{{Theorem}}[section]
\newtheorem{lemma}[theorem]{{Lemma}}
\newtheorem{proposition}[theorem]{{Proposition}}
\newtheorem{definition}[theorem]{{Definition}}
\newtheorem{corollary}[theorem]{{Corollary}}
\newtheorem{example}[theorem]{{Example}}
\newtheorem{question}[theorem]{{Question}}
\newtheorem{remark}[theorem]{{Remark}}
\newtheorem{claim}[theorem]{{Claim}}
\newcommand{\st}{\ | \ }
\newcommand{\resp}{resp.\ }
\newcommand{\nntangles}{\mathcal{T}^{\wedge}_{n,n}}
\newcommand{\C}{\mathbb{C}}
\newcommand{\Q}{\mathbb{Q}}
\newcommand{\R}{\mathbb{R}}
\newcommand{\Z}{\mathbb{Z}}
\newcommand{\F}{\mathbb{F}}
\newcommand{\CA}{\mathcal{A}} 
\newcommand{\gr}{\text{gr}}
\newcommand{\grh}{\gr_h}
\newcommand{\grq}{\gr_q}
\newcommand{\grqk}{\gr_{q-k}}
\newcommand{\grk}{\gr_k}
\newcommand{\mroto}{m_{r_0, t_0}}
\newcommand{\grroto}{\gr_{r_0, t_0}}
\newcommand{\sroto}{s_{r_0, t_0}}
\newcommand{\tot}{{\textit{tot}}}
\newcommand{\ftot}{{\textit{ftot}}}
\newcommand{\dftot}{\delta_{\ftot}}
\newcommand{\dd}{\mathbf{d}}
\newcommand{\ddgamma}{\mathbf{d}_{\D_\gamma}}
\newcommand{\h}{\mathbf{h}}
\newcommand{\hgamma}{\h_{\D_\gamma}}
\newcommand{\CF}{\mathcal{F}} 
\newcommand{\Cftot}{\CC_\ftot}
\newcommand{\cone}{\mathrm{cone}}
\newcommand{\BBO}{\mathbb{O}}
\newcommand{\BBX}{\mathbb{X}}
\newcommand{\rk}{\mathrm{rk}} 
\newcommand{\bclosure}{\widehat{\beta}}
\newcommand{\idclosure}{\widehat{\mathbb{1}}}
\newcommand{\sigmahat}{\widehat{\sigma}}
\newcommand{\srt}{s_{r, t}}
\newcommand{\grrt}{gr_{r,t}}
\newcommand{\vminus}{\mathbf{v}_-}
\newcommand{\Mt}{\mathfrak{M}_{t_0}}
\newcommand{\catLink}{\textbf{Link}} 
\newcommand{\catDiag}{\textbf{Diag}} 
\newcommand{\catFilt}{\textbf{Filt}} 
\newcommand{\boolcube}{\mathbf{2}} 
\newcommand{\Kom}{\textit{Kom}}
\newcommand{\Vect}{\textit{Vect}}
\newcommand{\CC}{\mathcal{C}} 
\newcommand{\D}{\mathcal{D}} 
\newcommand{\CK}{\mathcal{K}} 
\newcommand{\Kh}{\mathit{Kh}} 
\newcommand{\Kc}{\mathit{Kc}} 
\newcommand{\Aux}{\mathit{Aux}} 
\newcommand{\SSS}{Sarkar-Seed-Szab\'{o}}
\newcommand{\Szabo}{Szab\'{o}}
\newcommand{\Ozsvath}{Ozsv\'{a}th}
\title[Annular link invariants from Sarkar-Seed-Szab\'{o} spectral sequence]{Annular link invariants from the Sarkar-Seed-Szab\'{o} spectral sequence}
\author[L. Truong]{Linh Truong}
\thanks{The first author was partially supported by NSF grant DMS-1606451.}
\address {School of Mathematics, Institute for Advanced Study, Princeton, NJ 08540}
\email{ltruong@math.ias.edu}
\author[M. Zhang]{Melissa Zhang}
\address {Department of Mathematics, University of Georgia, Athens, GA 30602}
\email{melissa.zhang@uga.edu}
\begin{document}
\begin{abstract}
For a link in a thickened annulus $A \times I$, we define a $\Z \oplus \Z \oplus \Z$ filtration on Sarkar-Seed-Szab\'o's perturbation of the geometric spectral sequence. The filtered chain homotopy type is an invariant of the isotopy class of the annular link. From this, we define a two-dimensional family of annular link invariants and study their behavior under cobordisms. In the case of annular links obtained from braid closures, we obtain a necessary condition for braid quasipositivity and a sufficient condition for right-veeringness, as well as Bennequin-type inequalities. 
\end{abstract}
\maketitle

\section{Introduction}

In 2000, Khovanov introduced his categorified knot invariant now known as $\mathfrak{sl}(2)$ homology or Khovanov homology \cite{Khovanov-00}. Soon after, \Ozsvath-\Szabo\ \cite{OzSz-HFK} and Rasmussen \cite{Rasmussen-HFK} independently introduced knot Floer homology, which arises from a filtration on Heegaard Floer homology. Among their many topological applications, knot Floer homology and Khovanov homology have led to a wealth of concordance invariants. 

\Ozsvath\ and \Szabo\ extracted information on the four-ball genus of a knot by constructing the concordance invariant $\tau$ \cite{OzSz-tau}, defined using a filtration induced by a knot on the Heegaard Floer chain complex of the three-sphere. Through a similar algebraic construction as that for $\tau$, Rasmussen defined a concordance invariant $s$ \cite{Rasmussen-s}, which is derived from a filtration spectral sequence from Khovanov homology to Lee's perturbation \cite{Lee-perturbation}  of Khovanov homology.

The concordance invariants $\tau$ and $s$ are defined using a single filtration grading on a Heegaard Floer or Khovanov complex. Concordance invariants constructed using two linearly independent filtration gradings include
\begin{itemize}
    \item \Ozsvath-Stipsicz-\Szabo's concordance homomorphism $\Upsilon_t$ \cite{OSS-Upsilon}, a piecewise linear function that generalizes the  \Ozsvath-\Szabo\ $\tau$ invariant. The $\Upsilon_t$ invariant is defined using the $\Z \oplus \Z$-filtered knot Floer chain complex $\text{CFK}^\infty(K)$. 
    \item Grigsby-Licata-Wehrli's $d_t$ \cite{GLW-dt}, a smooth annular link invariant with applications to annular link cobordisms and braids, defined from the $\Z \oplus \Z$-filtered annular Khovanov-Lee complex.
    \item the \SSS\ generalized Rasmussen invariants $s^{\mathcal{U}}$ \cite{SSS} derived from a perturbation of the Szab\'o geometric spectral sequence and the Bar Natan complex. 
    \item Lewark-Lobb's smooth concordance invariants $\gimel_n$ \cite{LL-gimel}, piecewise linear functions arising from Khovanov-Rozansky's complex for $\mathfrak{sl}_n$ knot cohomology over the ring $\C[x]/x^{n-1}(x-1)$.
    \item Dai-Hom-Stoffregen-Truong's  concordance homomorphisms $\phi_j$ \cite{DHST} defined using the knot Floer chain complex over the ring $\mathcal{R} = \F[U,V]/UV$.
\end{itemize}
We study the \SSS\ perturbation of the geometric spectral sequence in the setting of annular links, that is, isotopy classes of links in a thickened annulus $A \times I$, or equivalently, a solid torus $D^2 \times S^1$.
We will show that for an annular link, the \SSS\ complex admits an annular filtration. This extra filtration allows us to define a two-dimensional family of real-valued annular link invariants $\srt$, by using three filtration gradings on the \SSS\ complex. This family of invariants recovers the \SSS\ generalized $s$-invariants and shares many properties with Grigsby-Licata-Wehrli's $d_t$ annular link invariants from Khovanov-Lee homology (cf.\ \cite{GLW-dt}, Theorem 1). 

\begin{theorem}
\label{thm:mainINTRO}
Let $(L, o)$ be an annular link $L \subset A \times I$ equipped with an orientation $o$. Let $r \in [0,1]$, $ t \in [0,1]$.
\begin{enumerate}
    \item
    For each pair $r,t \in [0,1]$, $s_{r,t}(L,o)$ is an oriented annular link invariant.
    \item Suppose $L$ is a knot. Then $s_{0,0}(L,o) =  s_{\F_2}(L,o)-1$, where $s_{\F_2}$ is Rasmussen's concordance invariant over $\F_2$ coefficients \cite{turner-calculating, mackaay-turner-vaz}.
    \item 
    For fixed $r$ (respectively  $t$), the function $s_{r,t}(L,o)$ is piecewise-linear with respect to the variable $t$ (respectively $r$). 
    \item 
    Let $\omega$ be the wrapping number of $L$. Fix $r \in [0,1]$. Then for all $t_0 \in [0,1]$
    \[
        \left ( -\frac{1}{1-r} \right )
        \lim_{t \to t_0+}
        \frac{
            \srt(L,o) 
            - s_{r,t_0}(L,o)
        }{t - t_0}
        \in \{ -\omega, -\omega + 2, \ldots, \omega-2, \omega\}.
    \]
    \item 
    Let $F: (L, o) \to (L', o')$ be an oriented cobordism between two nonempty, oriented links such that each component of $F$ has a boundary component in $L$. Let $a_0$ be the number of annular births or deaths, $a_1$ the number of saddles, and $b_0$ the number of non-annular births or deaths. Then
    \[
        s_{r,t}(L,o) - s_{r,t} (L', o') \leq (r-1)(a_0 - a_1 + b_0(1-t)).
    \]
    If furthermore each component in $F$ has a boundary component in $L'$ as well, then 
    \[
        \vert s_{r,t}(L,o) - s_{r,t} (L', o') \vert \leq (r-1)(a_0 - a_1 + b_0(1-t)).
    \]
    \item $\srt(L,o)$ is an annular concordance invariant.
\end{enumerate}
\end{theorem}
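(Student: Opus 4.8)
The plan is to deduce part (6) as a formal consequence of the cobordism inequality in part (5), exactly as concordance invariance of $\tau$ and $s$ follows from their behavior under cobordisms. First I would recall that an annular concordance from $(L,o)$ to $(L',o')$ is, by definition, a smooth oriented annular cobordism $F \subset (A\times I)\times[0,1]$ in which every component is an annulus with one boundary circle in $L$ and the other in $L'$; in particular $F$ has no closed components, every component has a boundary component in both $L$ and $L'$, and each component has genus zero.

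Next I would put $F$ in a generic Morse position with respect to the $[0,1]$ coordinate so that it decomposes into finitely many elementary cobordisms. Because each component is a genus-zero annulus connecting $L$ to $L'$, an Euler characteristic count shows that the number of births plus deaths equals the number of saddles, i.e. $(a_0 + b_0) = a_1$ in the notation of part (5), where $a_0$ counts annular births/deaths, $b_0$ counts non-annular births/deaths, and $a_1$ counts saddles. Hence $a_0 - a_1 + b_0 = 0$, so $a_0 - a_1 + b_0(1-t) = -b_0 t$.

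Then I would invoke the second, two-sided inequality of part (5), which applies precisely because each component of $F$ has a boundary component in $L$ and in $L'$: it gives
\[
    \bigl| s_{r,t}(L,o) - s_{r,t}(L',o') \bigr| \le (r-1)\bigl(a_0 - a_1 + b_0(1-t)\bigr) = (r-1)(-b_0 t) = (1-r)\, b_0\, t .
\]
To finish I would argue that a genus-zero annular concordance can be taken to have $b_0 = 0$: a non-annular birth creates a new split unknot component not meeting $L$, and since every component of $F$ must connect $L$ to $L'$, any such unknotted sphere piece can be removed (or, equivalently, we may isotope $F$ rel boundary to eliminate non-annular births and deaths, as they contribute a trivial split summand). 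With $b_0 = 0$ the right-hand side vanishes, yielding $s_{r,t}(L,o) = s_{r,t}(L',o')$ for all $r,t \in [0,1]$.

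The main obstacle I anticipate is the bookkeeping in the last step: justifying that the non-annular births/deaths can be assumed absent, or alternatively handling them directly. The cleanest route is probably not to eliminate them but to observe that a non-annular birth must be paired, within its connected component of $F$, with a later saddle merging it into the rest of that component (and symmetrically for deaths), so that one can slide the two-sided inequality of part (5) along the two sub-cobordisms $L \to L''$ and $L'' \to L'$ and combine; but one must be careful that the intermediate link $L''$ and the orientations behave well, and that the counts $a_0, a_1, b_0$ split additively. If this additivity is granted, concordance invariance is immediate; otherwise one falls back on the Euler-characteristic argument above together with the standard fact that split unknot summands do not affect $s_{r,t}$, which itself follows from the definition of $s_{r,t}$ via the distinguished homology class and the behavior of the \SSS\ complex under disjoint union with an unknot.
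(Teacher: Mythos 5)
Your overall strategy matches the paper's: invoke the two-sided cobordism inequality from part (5), use the Euler characteristic count $a_0 + b_0 = a_1$ (each component of $F$ is an annulus, so $\chi(F)=0$), and conclude that the right-hand side vanishes. The algebra $(r-1)(a_0 - a_1 + b_0(1-t)) = (1-r)\,b_0\,t$ is correct. The gap is entirely in the justification that $b_0 = 0$.

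You argue that a non-annular birth ``creates a new split unknot component not meeting $L$'' and hence produces a spherical piece of $F$ which can be removed. This is not correct: a non-annular birth creates a new circle in a \emph{level set} of $F$, but that circle is not in general a new connected component of the \emph{surface} $F$---a later saddle can merge it into an annulus component. So the born circle need not correspond to a closed, removable piece of $F$, and your removal argument fails. Your fallback (decomposing the cobordism at intermediate links $L''$ and ``sliding'' the inequality) also does not help, since the counts $a_0, a_1, b_0$ are additive under composition, so splitting the cobordism does not reduce the $b_0$-contribution to the bound.

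The correct (and simpler) reason is the one the paper gives: by definition, an annular concordance $F$ lies in $(A \times I) \times [0,1]$, i.e.\ is disjoint from the axis $U \times [0,1] \subset S^3 \times [0,1]$. A birth or death in a Morse decomposition of such an $F$ produces a circle bounding a small disk contained in $A \times I$; such a circle cannot separate the basepoints $\BBX$ and $\BBO$, hence is trivial in the annular sense. Thus \emph{all} births and deaths in an annular concordance are annular, so $b_0 = 0$ directly. Combined with your Euler characteristic count, this gives $a_0 = a_1$ and the bound collapses to $\lvert s_{r,t}(L,o) - s_{r,t}(L',o')\rvert \le 0$, as desired. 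You do not need to isotope away any births or deaths; they simply never occur non-annularly.
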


We apply properties of the annular link invariants in Theorem \ref{thm:mainINTRO} to the study of braid closures. Let $\sigma \in \mathfrak{B}_n$ be an $n$-braid. Its braid closure $\sigmahat$ is naturally an annular link, equipped with a braid-like orientation $o_\uparrow$ as described in Section \ref{subsection:braids}. We abbreviate $\srt(\sigmahat, o_\uparrow)$ by $\srt(\sigmahat)$. Analogous to properties of the $d_t$ invariant \cite[Theorem 2, Theorem 4]{GLW-dt}, we obtain a \emph{necessary} condition for braid quasipositivity and a \emph{sufficient} condition for right-veeringness. For a fixed $r_0 \in [0,1]$, we find the function $s_{r_0, t}(\sigmahat)$ is piecewise-linear and has slope bounded above by $n$, the braid index of $\sigma$. Let $\mroto(L,o)$ denote the right-hand slope, with respect to the variable $t$, of $s_{r_0, t}(L,o)$ at $t_0$.  

\begin{theorem}
If $\sigma$ is a quasipositive braid of index $n$ and writhe $w \geq 0$, we have
\[\srt(\sigmahat) = (1-r)(w - (1-t)n) \]
for all $r \in [0,1]$ and $t \in [0,1]$.
\end{theorem}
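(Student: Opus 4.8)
The plan is to establish the two inequalities $\srt(\sigmahat)\le (1-r)\bigl(w-(1-t)n\bigr)$ and $\srt(\sigmahat)\ge (1-r)\bigl(w-(1-t)n\bigr)$ separately. The first follows from the quasipositive band presentation of $\sigma$ together with the cobordism inequality of Theorem~\ref{thm:mainINTRO}(5); the second is a Bennequin-type inequality for braid closures that holds for \emph{every} $n$-braid.

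For the upper bound, write $\sigma=\prod_{k=1}^{\ell} w_k\sigma_{j_k}w_k^{-1}$ as a product of $\ell$ positive bands. Each factor has exponent sum $1$, so $w=\ell$ (in particular $w\ge 0$ is automatic). Taking the oriented resolution at the crossing of each $\sigma_{j_k}$ replaces $\sigma$ by the trivial $n$-braid $\mathbb 1$, and carrying out these $\ell$ resolutions one at a time realizes an oriented annular cobordism $\Sigma\colon\sigmahat\to\idclosure$ with exactly $\ell=w$ saddles and no births or deaths, where $\idclosure$ is the closure of the trivial $n$-braid, oriented compatibly with $o_\uparrow$. Since $\Sigma$ is obtained from $\idclosure\times[0,1]$ by attaching $\ell$ bands, it has no closed components and each of its components meets both $\idclosure$ and $\sigmahat$; hence Theorem~\ref{thm:mainINTRO}(5) applies with $a_0=b_0=0$ and $a_1=w$, giving
\[
    \srt(\sigmahat)-\srt(\idclosure)\;\le\;(r-1)(-w)\;=\;(1-r)w.
\]
Combined with the value $\srt(\idclosure)=-(1-r)(1-t)n$ — the base case $\sigma=\mathbb 1$, which must be computed directly from the \SSS\ complex of the $n$-component trivial braid closure — this yields $\srt(\sigmahat)\le (1-r)\bigl(w-(1-t)n\bigr)$.

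For the lower bound, I would invoke the Bennequin-type inequality $\srt(\bclosure)\ge (1-r)\bigl(w(\beta)-(1-t)n\bigr)$ valid for an arbitrary $n$-braid $\beta$ of writhe $w(\beta)$. Specializing to $r=t=0$ and using Theorem~\ref{thm:mainINTRO}(2), this is the classical slice--Bennequin inequality $s_{\F_2}(\bclosure)\ge w(\beta)-n+1$; the general form is obtained the same way, by bounding the filtration level of the distinguished braid (Plamenevskaya-type) class, which lies at the oriented resolution in the cube of resolutions of the braid diagram. Applying this with $\beta=\sigma$ gives $\srt(\sigmahat)\ge (1-r)\bigl(w-(1-t)n\bigr)$, and together with the upper bound this proves the asserted equality for all $r,t\in[0,1]$.

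The step I expect to be the real obstacle is the sharp lower bound: the cobordism inequality by itself is too weak, since applying Theorem~\ref{thm:mainINTRO}(5) to the reverse cobordism $\idclosure\to\sigmahat$ only gives $\srt(\sigmahat)\ge \srt(\idclosure)-(1-r)w=-(1-r)\bigl((1-t)n+w\bigr)$, which misses the target by $2(1-r)w$. One genuinely needs a lower bound coming from a distinguished homology class rather than from cobordism maps alone, i.e.\ the Bennequin-type inequality above. Everything else is bookkeeping: confirming $w=\ell$, checking that $\Sigma$ has no births, deaths, or closed components so that Theorem~\ref{thm:mainINTRO}(5) applies as stated, matching the orientations, and verifying that $-(1-r)(1-t)n+(1-r)w$ is exactly $(1-r)\bigl(w-(1-t)n\bigr)$.
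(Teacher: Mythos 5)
Your proposal is correct and takes essentially the same route as the paper: the upper bound comes from the quasipositive band cobordism to $\idclosure_n$ together with the cobordism inequality and the value $\srt(\idclosure_n)=-(1-r)(1-t)n$, and the lower bound is the paper's Lemma~\ref{lem:boundsrt}, obtained by computing $(\gr_h,\gr_q,\gr_k)(g(o_\uparrow))=(0,\,-n+w,\,-n)$ for the distinguished generator at the oriented (braid-like) resolution. You correctly anticipate that a reversed cobordism inequality is too weak and that the sharp lower bound must come from a distinguished cycle representative.
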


\begin{theorem}
Let $\sigma \in \mathfrak{B}_n$. Fix $ r_0 < 1$. If $s_{r_0, t}(\sigmahat)$ attains maximal slope at some $t = t_0 < \frac{1}{2}$ (that is, $\mroto(\sigmahat) = n$ for some $t_0 \in [0, \frac{1}{2})$), then $\sigma$ is right-veering. 
\end{theorem}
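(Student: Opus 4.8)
The plan is to prove the contrapositive: assuming $\sigma$ is not right-veering, I would show that for every $r_0<1$ and every $t_0\in[0,\tfrac12)$ the right-hand slope $\mroto(\sigmahat)$ is strictly less than $n$. The first step is to translate the slope into a homological statement about a canonical class, exactly as in the analysis underlying part (4) of Theorem~\ref{thm:mainINTRO}. For the braid closure $\sigmahat$ with its braid orientation $o_\uparrow$, the oriented (braid-like) resolution consists of $n$ Seifert circles all parallel to the annular axis, so the extremal annular ($k$-)grading occurring in the \SSS\ complex is $\pm n$, attained on this resolution; the distinguished Lee-type class $\s_{o_\uparrow}$ is supported on this resolution, and its component in the extremal annular grading is precisely the \SSS-deformed analogue of the annular refinement of Plamenevskaya's transverse class $\psi(\sigma)$. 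The slope $\mroto(\sigmahat)$ equals $n$ if and only if this extremal component of $\s_{o_\uparrow}$ survives, i.e. is nonzero in the appropriate subquotient of the homology at the parameter $(r_0,t_0)$.

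The second step is to identify, for $t_0<\tfrac12$, which subquotient this is. The two ``ends'' of the filtration package have differentials of different bidegree: the Khovanov part of the differential is filtered with respect to both the quantum and the annular gradings, while the \SSS\ perturbation terms shift the quantum grading in one direction and the annular grading in the other. Comparing these shifts shows that, with respect to the combined filtration underlying $\srt$, all perturbation terms strictly \emph{increase} the combined filtration exactly when $t<\tfrac12$; the value $\tfrac12$ is precisely the balance point of these two shifts, which is why it is independent of $r_0$. Hence for $t_0\in[0,\tfrac12)$ the perturbation cannot interfere with the survival of the extremal-annular component of $\s_{o_\uparrow}$, and $\mroto(\sigmahat)=n$ reduces to the Khovanov-level assertion that the (annular) Plamenevskaya class is nonzero in the top winding-grading summand of the annular Khovanov homology of $\sigmahat$. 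Making this reduction precise --- verifying the bidegree inequality that isolates $t=\tfrac12$, and checking that no perturbation term can manufacture the class --- is the step I expect to be the main obstacle, and it is the only place the hypothesis $t_0<\tfrac12$ is genuinely used.

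The third step invokes the obstruction coming from non-right-veeringness. This is the same input used to prove the analogous statement for the $d_t$ invariant in \cite{GLW-dt}: if $\sigma\in\mathfrak{B}_n$ is not right-veering, then the canonical class in the top winding-grading summand of annular Khovanov homology of $\sigmahat$ vanishes. The proof of that fact rests on the Honda--Kazez--Mati\'{c} characterization of non-right-veeringness by an essential properly embedded arc in the $n$-punctured disk that $\sigma$ moves strictly to the left, together with the skein exact triangle attached to such an arc, through which the canonical class is forced to factor as $0$. Combining the three steps: if $\sigma$ is not right-veering then the extremal-annular component of $\s_{o_\uparrow}$ is already a boundary at the Khovanov level, hence does not survive at $(r_0,t_0)$ for any $t_0<\tfrac12$, so $\mroto(\sigmahat)<n$; this is the contrapositive of the theorem.
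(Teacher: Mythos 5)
Your overall strategy is close to the paper's, and you correctly identify the two essential inputs: (i) a grading estimate that isolates the threshold $t_0 < \tfrac12$, and (ii) a topological obstruction statement about the Plamenevskaya class when $\sigma$ is not right-veering. The paper packages (ii) via Hubbard--Saltz's annular invariant $\kappa$ (which, for non-right-veering braids, equals $2$, meaning precisely that there is a chain $\theta$ with $d_1(\theta) = \mathbf{v}_-$ and $\gr_k(\theta) = -n+2$), and proves the result by contradiction rather than by contrapositive; these differences are cosmetic.

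The genuine gap is in your Step~2. You assert that ``all perturbation terms strictly increase the combined filtration exactly when $t < \tfrac12$,'' and that this alone lets you decouple the \SSS\ perturbation from the Khovanov level. This is not correct: by Lemma~\ref{lem:tridegree} every component of $\delta_\ftot$ is non-decreasing in $\gr_{r,t}$ for \emph{all} $r,t\in[0,1]$ (otherwise $\CC_\ftot$ would not be a filtered complex), and in fact $h_{1,2}$ and $d_{2,2}$ have $\gr_{0,1}$-degree exactly $0$, so there is no $t<\tfrac12$ threshold detectable in the degrees of the differential components themselves. The $\tfrac12$ actually arises from a comparison specific to the witnessing chain $\theta$: starting from $\theta$ at trigrading $(\gr_h,\gr_q,\gr_{q-k}) = (-1,\, -n+w,\, w-2)$, the Khovanov differential $d_1$ gains $r_0 + 2t_0(1-r_0)$ in $\gr_{r_0,t_0}$ to produce $\mathbf{v}_-$, while the higher terms $\sum_{i\geq 2} d_i + \sum_{i\geq 1} h_i$ gain at least $r_0 + 2(1-r_0)(1-t_0)$. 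The second exceeds the first precisely when $(1-r_0)(1-t_0) > (1-r_0)t_0$, i.e.\ when $t_0<\tfrac12$ (for $r_0<1$). This is what lets you replace a putative optimizing representative $\xi$ for $[g(o)]$ by $\xi'' = \xi + \delta_\ftot(\theta)$, which is still a representative of $[g(o)]$ and strictly increases $\gr_{r_0,t_0}$, contradicting optimality. In short, your decoupling intuition is headed in the right direction, but the threshold $t_0<\tfrac12$ must be extracted from the interplay of $d_1$ with $\delta_\ftot^{\geq 2}$ applied to the specific chain $\theta$ supplied by $\kappa(\widehat\sigma)=2$, not from any blanket statement about perturbation terms; as written, your Step~2 would not go through.
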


In addition, we study the behavior of $\srt(\sigmahat)$ under Markov stabilizations. We achieve bounds which can be compared to the analogous $d_t$ bounds (\cite{GLW-dt}, Proposition 5). 
\begin{proposition}
Let $\sigma \in \mathfrak{B}_n$ and suppose $\sigma^\pm \in \mathfrak{B}_{n+1}$ is obtained from $\sigma$ by either a positive or negative Markov stabilization. Then for all $r \in [0,1]$, $t \in [0,1]$,
\[ \srt(\sigmahat) - (1-r)t  \leq \srt(\sigmahat^\pm)  \leq  \srt(\sigmahat) + (1-r)t . \]
\end{proposition}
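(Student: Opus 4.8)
The plan is to realize a Markov stabilization as an annular cobordism and then invoke the cobordism inequality, part (5) of Theorem \ref{thm:mainINTRO}. Recall that $\sigmahat^\pm$ is obtained from $\sigmahat$ by adding a single positive (resp.\ negative) Reidemeister~I-type kink that wraps once more around the braid axis; on the level of link diagrams in the annulus, $\sigmahat^\pm$ differs from $\sigmahat \sqcup U$ by one saddle, where $U$ is a small unknot that is trivial in the annulus (i.e.\ a \emph{non-annular} unknot, bounding a disk in $A \times I$ disjoint from the braid axis). Equivalently, there is an oriented cobordism $F$ from $\sigmahat^\pm$ to $\sigmahat$ consisting of exactly one saddle and one non-annular death, and running it backwards gives a cobordism from $\sigmahat$ to $\sigmahat^\pm$ with one saddle and one non-annular birth. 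In both cobordisms every component has boundary on both ends (the braid closure component persists, and the extra kink/unknot is attached to it by the saddle), so the two-sided inequality in part (5) applies.

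\textbf{Step 1: Set up the cobordisms.} I would first describe carefully, with a local picture near the stabilization region, the trace of the Markov move: a positive stabilization replaces a strand by a strand with a positive crossing with a new strand that closes up around the axis. Pulling the new cap-arc across this crossing exhibits $\sigmahat^\pm$ as the result of a single saddle move applied to the split union $\sigmahat \sqcup U$, with $U$ bounding an embedded disk in the complement of the braid axis. This identifies an oriented cobordism $F: \sigmahat^\pm \to \sigmahat$ with $a_1 = 1$ saddle, $a_0 = 0$ annular births/deaths, and $b_0 = 1$ non-annular death; the orientation $o_\uparrow$ on $\sigmahat$ pulls back to the braid-like orientation on $\sigmahat^\pm$, as needed so that the invariants in the statement are the ones being compared.

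\textbf{Step 2: Apply the inequality.} Feeding $F$ into the two-sided form of part (5) of Theorem \ref{thm:mainINTRO} with $a_0 = 0$, $a_1 = 1$, $b_0 = 1$ gives
\[
\lvert \srt(\sigmahat^\pm) - \srt(\sigmahat) \rvert \leq (r-1)\bigl(0 - 1 + (1-t)\bigr) = (r-1)(-t) = (1-r)t,
\]
which is exactly the claimed bound $\srt(\sigmahat) - (1-r)t \le \srt(\sigmahat^\pm) \le \srt(\sigmahat) + (1-r)t$. The same cobordism (read in either direction) handles both the positive and negative stabilizations, since the count of saddles and of non-annular births/deaths is insensitive to the sign of the crossing; only the local orientation bookkeeping changes, and that does not affect the numerology.

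\textbf{Main obstacle.} The only real content is Step 1: verifying that the stabilization cobordism genuinely has the claimed topological type in the \emph{annular} setting — in particular that the auxiliary unknot $U$ is non-annular (contributes to $b_0$, not $a_0$) and that each component of $F$ meets both boundary links, so that the sharper two-sided inequality, rather than the one-sided one, is available. This is a matter of drawing the right local model and checking that the braid axis can be kept disjoint from the disk bounded by $U$; once that is in place, the estimate is immediate. (As a sanity check, one can compare with the quasipositive computation: if $\sigma$ is quasipositive of writhe $w \geq 0$ then $\sigma^+$ is quasipositive of writhe $w+1$ and index $n+1$, and the two formulas $\srt(\sigmahat) = (1-r)(w-(1-t)n)$ and $\srt(\sigmahat^+) = (1-r)(w+1-(1-t)(n+1)) = \srt(\sigmahat) + (1-r)t$ sit exactly at the upper end of the asserted interval, as expected for a positive stabilization.)
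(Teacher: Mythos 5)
Your proof is correct, and it is more economical than the paper's. The paper splits the argument in two: for the upper bound $\srt(\sigmahat^\pm) \leq \srt(\sigmahat)+(1-r)t$ it uses the single-saddle cobordism $\sigmahat^\pm \to \sigmahat \sqcup \idclosure_1$ (resolving the extra crossing to the oriented resolution, so the byproduct circle is an \emph{annular} unknot wrapping once around the axis), applies the one-sided cobordism bound, and then evaluates $\srt(\sigmahat \sqcup \idclosure_1)$ by invoking additivity under horizontal composition (Proposition \ref{prop:additive}) together with the value $\srt(\idclosure_1) = -(1-r)(1-t)$; for the lower bound it uses the cobordism $\sigmahat \to \sigmahat^\pm$ consisting of one non-annular birth and one saddle and applies the one-sided bound again. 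You instead take that second cobordism, observe that every component of it meets both boundary links, and feed the single count $a_0 = 0$, $a_1 = 1$, $b_0 = 1$ into the two-sided form of part (5), yielding $\lvert \srt(\sigmahat^\pm) - \srt(\sigmahat)\rvert \leq (1-r)t$ in one shot. This bypasses the additivity proposition and the quasipositive computation for $\idclosure_1$ entirely, which is cleaner; the only point to be careful about is your Step 1 — you need the saddle band to wrap once around $\BBX$ and cross a strand of $\sigmahat$ exactly once so that the merged diagram really is $\sigmahat^\pm$ with the braid-like orientation, and so that the detached circle $U$ (after the reverse saddle) is indeed non-annular. This is exactly the same picture implicitly used for the paper's lower-bound cobordism, so it is sound, but it would be worth drawing the local model explicitly since it is the one geometric input; the resolution of the new crossing to the \emph{oriented} resolution would instead produce the annular $\idclosure_1$, so the $b_0 = 1$, $a_0 = 0$ count depends on choosing the other saddle. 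Your sanity check against the quasipositive formula is a nice touch and matches the upper endpoint as it should.
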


A complete understanding of the behavior of the $\srt$ (and similarly, $d_t$) annular link invariants under positive and negative stabilization is currently unknown. This question is related to the question of the effectiveness of transverse invariants obtained from Khovanov homology. 

\begin{question} \label{ques:stabilization}
Suppose $\sigma^+ \in \mathfrak{B}_{n+1}$ is obtained from $\sigma \in \mathfrak{B}_n$ by a positive Markov stabilization. Does the equality 
\[  \srt(\sigmahat^+)  =  \srt(\sigmahat) + (1-r)t  \]
hold for all $r \in [0,1]$, $t \in [0,1]$?
\end{question}
\noindent
A positive answer to Question \ref{ques:stabilization} would yield a new transverse invariant: $\srt(\sigmahat) - n(1-r)t $. It is unclear whether such an invariant would be \emph{effective}, in the sense that it would provide more information than the self-linking number of the transverse link. The analogous questions with respect to the $d_t$ invariant are open, as well as the effectiveness of Plamenevskaya's transverse link invariant $\psi$ \cite{Plamenevskaya-transverse}. 

Analogous to the annular Khovanov-Lee setting \cite[Corollary 4]{GLW-dt}, we produce a lower bound on the \emph{band rank} $\rk_n$ of a braid \cite{Rudolph-band-rank}. Given $\beta \in \mathfrak{B}_n$, 
    \[
        \rk_n(\beta):= \min \left \{ c \in \Z^{\geq 0} \ \bigg \vert \  \beta = \prod_{j=1}^c \omega_j \sigma_{i_j}^\pm (\omega_j)^{-1} \text{ for some } \omega_j \in \mathfrak{B}_n \right \},
    \]
    where $\sigma_{i_j}$ denotes the elementary Artin generators. Note that band rank $\rk_n$ is a braid conjugacy class invariant. 
    Topologically, band rank $\rk_n(\beta)$ is the minimum number of half-twist bands (running perpendicularly to the strands) needed to construct a Seifert surface for $\bclosure$ from $n$ disks (the obvious Seifert surface for the identity braid closure in $\mathfrak{B}_n$). 

\begin{proposition}
Let $r \neq 1$. 
Given an oriented cobordism $F$ from $(L,o)$ to $(L',o')$ with $a_0$ annular even index critical points, $a_1$ annular odd index critical points, and $b_0$ non-annular even index critical points, we have
\[
 \left | \frac{s_{r,t}(\hat\beta)}{1-r} + n(1-t) \right | \leq rk_n(\beta).
 \]
\end{proposition}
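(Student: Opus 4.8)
The plan is to realize $\hat\beta$ as the top of an explicit band cobordism starting from the closure of the identity braid, and then to invoke the cobordism inequality of Theorem~\ref{thm:mainINTRO}(5).

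First I would fix a band word $\beta = \prod_{j=1}^{c}\omega_j\,\sigma_{i_j}^{\pm 1}\,\omega_j^{-1}$ realizing the minimum $c=\rk_n(\beta)$. Following Rudolph's surface-from-bands construction \cite{Rudolph-band-rank}, attaching the $c$ corresponding half-twisted bands to the $n$ coherently oriented Seifert disks of the trivial braid closure produces an oriented surface in $A\times I$ whose boundary, viewed as a movie of link diagrams, is an oriented cobordism $F\colon (\idclosure_n, o_\uparrow)\to(\hat\beta,o_\uparrow)$; here $\idclosure_n$ denotes the $n$-component annular unlink in which each component has wrapping number $1$. Each band is glued along two parallel, coherently oriented strands, so every band attachment is a single oriented saddle, and hence $F$ has $a_1=c$ saddles, $a_0=0$ annular births or deaths, and $b_0=0$ non-annular births or deaths. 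Since attaching a $1$-handle to a surface never disconnects it, each connected component of $F$ meets both $\idclosure_n$ and $\hat\beta$, so the hypotheses of the two-sided form of Theorem~\ref{thm:mainINTRO}(5) are met; applying it gives
\[
  \bigl| s_{r,t}(\idclosure_n)-s_{r,t}(\hat\beta)\bigr|\;\le\;(r-1)\bigl(a_0-a_1+b_0(1-t)\bigr)\;=\;(1-r)\,\rk_n(\beta).
\]

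Next I would evaluate $s_{r,t}(\idclosure_n)$. The identity braid is (vacuously) quasipositive of writhe $0$, so the formula for $\srt$ on quasipositive braid closures proved above yields $s_{r,t}(\idclosure_n)=(1-r)\bigl(0-(1-t)n\bigr)=-(1-r)(1-t)n$; alternatively this is a direct base-case computation, the \SSS\ complex of $\idclosure_n$ being the $n$-fold tensor power of that of the one-strand trivial braid, with its distinguished generator in the expected tri-filtration level. Substituting this into the displayed inequality and dividing by $1-r>0$ — which is exactly where the hypothesis $r\neq 1$ is used, since $r\in[0,1]$ forces $r<1$ — gives
\[
  \left|\,\frac{s_{r,t}(\hat\beta)}{1-r}+n(1-t)\,\right|\;\le\;\rk_n(\beta),
\]
which is the desired bound.

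The step I expect to be the main obstacle — or at least the one demanding the most care — is the construction of $F$: one must verify that a band word genuinely produces a smooth oriented cobordism in the thickened annulus whose only critical points are exactly $c$ saddles (no stray births or deaths, nothing non-annular), that the orientations on the two ends are the braid-like orientations $o_\uparrow$, and that the ``each component touches both ends'' condition holds automatically. Everything after that is the base-case value of $s_{r,t}(\idclosure_n)$ together with elementary algebra, where the only subtlety is tracking the sign of $1-r$.
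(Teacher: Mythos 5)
Your proposal is correct and follows essentially the same route as the paper: realize $\rk_n(\beta)$ as the number of saddles in a cobordism between $\widehat\beta$ and $\idclosure_n$, apply the two-sided cobordism inequality, plug in $s_{r,t}(\idclosure_n)=-(1-r)(1-t)n$, and divide by $1-r>0$. The paper phrases the cobordism bound by citing Proposition~\ref{prop:cob-filt-degrees} directly and runs the cobordism from $\widehat\beta$ to $\idclosure_n$ (then reverses it for the absolute value), while you invoke Theorem~\ref{thm:mainINTRO}(5) and run it the other way, but these are the same estimate; you are merely more explicit than the paper about why the band surface is a movie of saddles whose components all touch both ends.
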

As described in \cite{GLW-dt, Grigsby-ribbon}, work of Rudolph \cite[Sec. 3]{Rudolph-band-rank} suggests that a braid conjugacy class invariant (like $\srt$) that yields a lower bound on band rank could potentially lead to an effective ribbon obstruction. This question is explored using the $d_t$ invariant in \cite{Grigsby-ribbon}; we do not pursue this question using $\srt$ here.

The \SSS\ theory conjecturally relates the Khovanov and Heegaard Floer worlds by combining the Szab\'o geometric spectral sequence with Bar Natan's homology. Szab\'o's geometric spectral sequence \cite{Szabo-gss} is conjecturally isomorphic to the Ozsv\'ath-Szab\'o spectral sequence \cite{OzSz-KhHF} relating the Khovanov homology of (the mirror of) a link $\overline{L}$ to the Heegaard Floer homology $\widehat{\textit{HF}}(\Sigma(L))$ of the double branched cover. In addition, F. Lin \cite{Lin} recently constructed a spectral sequence relating a version of Bar Natan's homology of (the mirror of) a link $\overline{L}$ to the involutive monopole Floer homology $\widetilde{\textit{HMI}}(\Sigma(L))$ of the double branched cover $\Sigma(L)$, which is analogous to the hat version of the involutive Heegaard Floer homology $\widehat{\textit{HFI}}(\Sigma(L))$ of Hendricks-Manolescu \cite{Hendricks-Manolescu}. 


We will study a version of the \SSS\ theory for annular links. In \cite{Roberts-dbc} Roberts constructed an annular analogue to the Ozsv\'ath-Szab\'o spectral sequence \cite{OzSz-KhHF}. Therefore, the annular version of Szab\'o's geometric spectral sequence conjecturally converges to (a variant of) the knot Floer homology of the preimage $\widetilde{B}$ of the annular axis $B$ in the double branched cover.
While we study a filtered version of the \SSS\ complex, the full \SSS\ complex is defined over the two-variable polynomial ring $\F_2[H,W]$. It may also be interesting to extend the techniques in this work to, for instance, an annular version of the \SSS\ complex over $\F_2[H]$ and compare the resulting spectral sequence with that of F. Lin \cite{Lin}, potentially yielding a spectral sequence abutting to an involutive knot Floer invariant of $\widetilde{B}$ in the double branched cover $\Sigma(L)$.


\subsection{Acknowledgements} We thank Eli Grigsby for her advice and insightful comments on a draft. We also thank John Baldwin, Ciprian Manolescu, Gage Martin, Krzysztof Putyra, Adam Saltz, Sucharit Sarkar, and Matt Stoffregen for interesting conversations. This collaboration began at the Georgia International Topology Conference at the University of Georgia in 2017, and continued at Columbia University while the first author was a postdoc and the second author was a visiting PhD student. We thank UGA and Columbia's math departments for their hospitality. 

\section{Background}
\label{sec:background}

The purpose of this section is to review the homology theories used in this paper and to set notation.

\subsection{Links and their diagrams}
We are concerned with defining a functorial invariant for links in $S^3$ via link diagrams. Following \cite{BHL-funct}, we first define the relevant categories.

\begin{definition}
Let $\catLink$ denote the category of \emph{smooth} links in $S^3 = \R^3 \cup \{\infty\}$.
Objects are smooth isotopy classes of oriented links in $S^3$.
Morphisms are smooth isotopy classes of oriented, collared link cobordisms in $S^3 \times [0,1]$: two surfaces from $L_0 \subset S^3 \times \{0\}$ to $L_1 \subset S^3 \times \{1\}$ are equivalent if there is a smooth isotopy taking $F$ to $F'$ that fixes a collar neighborhood of the boundary $S^3 \times \{0,1\}$, pointwise.
\end{definition} 

Link invariants are typically computed from a link diagram.

\begin{definition}
Let $\catDiag$ denote the diagrammatic link cobordism category, representing links in $S^3$. 
Objects are oriented link diagrams in $S^2$, up to planar isotopy and the Reidemeister moves (see Remark \ref{rmk:reid-s2-r2}).
Morphisms are diagrammatic cobordisms, usually described as movies, up to a set of equivalences.

Movies are scans of surfaces in $S^3 \times [0,1]$, where $[0,1]$ is the time component. The scan at a fixed time is called a \emph{frame}, which can either look like a link diagram or something more singular. We can always arrange the link cobordism via isotopy such that there are only finitely many, isolated singular frames. In this situation, the movie is a composition of planar isotopies interspersed with finitely many \emph{elementary string interactions (ESIs)}: Reidemeister moves or oriented diagrammatic 0-, 1-, and 2-handle attachments. We sometimes refer to the handle attachments as births, saddles, and deaths, respectively, because a tiny snippet of the movie surrounding these critical levels looks like the birth or death of a circle, or describes a saddle cobordism. We visually depict ESIs by drawing the first and last frames of this tiny snippet.

Two movies are equivalent if they can be related by a finite sequence of the following moves (\cite{GLW-schur-weyl}, Appendix):
	\begin{itemize}
	\item  a Carter-Saito movie move \cite{Carter-Saito}, localized to a disk in $\R^2$,
	\item a time-level preserving isotopy of the associated immersed surfaces,
	\item an interchange of the time-levels of distant (noninteracting) ESIs.
	\end{itemize}
\end{definition}

\begin{remark}
\label{rmk:reid-s2-r2}
Since every link diagram will miss some point on $S^2$, we actually draw the diagrams on $\R^2$ and think of isotopies in $\R^2$. By Reidemeister's theorem, moving a strand across $\infty$ is equivalent to a finite sequence of Reidemeister moves through diagrams on $\R^2$.
\end{remark}

A functorial link invariant is a functor from $\catLink$. Baldwin-Hedden-Lobb describe how to lift a functor from $\catDiag$ to a functor from $\catLink$, and show that these two categories are equivalent \cite{BHL-funct}. 
In this paper we will focus on functors into a category of filtered complexes.

\begin{definition}
\label{def:cat-filt}
Let $\catFilt$ be the homotopy category of filtered complexes. The objects are filtered chain complexes, up to chain homotopy equivalence (we denote this relation by $\simeq$), and the morphisms are induced by filtered chain morphisms.
\end{definition}

\subsection{Annular links and cobordisms}
\label{subsec:annular-cobs}

Let $A$ be a closed, oriented annulus and let $I = [0,1]$ be the closed, oriented unit interval.  Via the identification 
\[A \times I = \{(r,\theta, z) \ | \ r \in [1,2], \theta \in [0, 2\pi], z \in [0,1]\} \subset (S^3 = \R^3\cup \infty),\]
any link $L \subset A \times I$ may be naturally viewed as a link in the complement of a standardly embedded unknot, $U = (z\text{-axis} \cup \infty) \subset S^3$. Such an \emph{annular link} $L \subset A \times I$ admits a diagram $\D(L) \subset A$, obtained by projecting a generic isotopy class representative of $L$ onto $A \times \{1/2\}$.

We shall view $\D(L)$  as a diagram on $S^2 \setminus \{\BBO, \BBX\}$ where $\BBX$ (\resp $\BBO$) are basepoints on $S^2$ corresponding to the inner (resp. outer) boundary circles of $A$. Note that if we forget the data of $\BBX$, we may view $\D(L)$ as a diagram on $\R^2 =  S^2 \setminus \BBO$ of $L$, viewed as a link in  $S^3$.  

In the annular context, we can categorize  these moves as annular or non-annular, based on whether they interact with the axis:
\begin{itemize}
    \item (1-handles) By transversality, all saddle moves can be thought of as annular.
    \item (0, 2-handles)  There are annular and non-annular births/deaths, depending on whether the circles being born/dying is trivial or not, respectively.
    \item (Reidemeister moves and planar isotopies) If the local disk in which a Reidemeister move occurs does not include the basepoint, then it is called an annular Reidemeister move; otherwise, it is non-annular. The same goes for planar isotopies.
\end{itemize}

\begin{remark} 
The homology theories / TQFTs of interest will still be defined as functors $\catLink \to \catFilt$; in particular, the distinguished homology classes come from the link diagram on $S^2$. However, the basepoints indicating the presence of the embedded unknot will provide an extra filtration. 
\end{remark}

\subsection{Khovanov (co)chains, gradings, and annular filtration}
\label{subsec:Khovanov-homology}

In this subsection we give a brief review of Khovanov homology and the variants appearing in this paper. We refer the reader to \cite{BN-Kh-intro, Khovanov-00} for more details on the construction, and focus mainly on setting the notation to be used in the rest of the paper.

Let $\boolcube^n$ be the $n$-dimensional Boolean cube $\{0,1\}^n$. The vertices $u \in \boolcube^n$ are graded by $|u| = \sum_{i} u_i$, and there is a directed edge $u \to v$ whenever $u \preceq v$ and $|v|-|u|= 1$.

Let $\D$ be an oriented link diagram with $n$ crossings. The Khovanov (co)chain complex $\Kc(\D)$ lies above $\boolcube^n$. In this section we'll only describe the (co)chains, and leave the differentials to a more general discussion in Section \ref{subsec:sss-defn}. 

Each vertex $u \in \boolcube^n$ corresponds to a complete resolution $\D_u$ of $\D$. Let $S_u$ be the collection of planar circles in this resolution. 
A \emph{marked resolution} is this resolution $\D_u$ where each circle is decorated with either a $+$ or a $-$. 
The vector space $\Kc$ associates to $u$ is generated by all these marked resolutions. 
Equivalently, let $\mathbb{V}$ be the two-dimensional vector space generated by $x_-$ and $x_+$; $\Kc(\D_u)$ is then $\mathbb{V}^{\otimes |S_u|}$, where the marked resolution corresponds to a pure tensor.
Equivalently, the marked resolution corresponds to a monomial in the symbols $\{x_i\}_{i=1}^{|S_u|}$ where $x_i$ belongs to the $i$th circle.
Here is the dictionary relating these three ways of describing the distinguished generators:
\begin{itemize}
    \item circle $i$ is labeled $-$ in the marked resolution
        = $x_-$ in position $i$ in the pure tensor
        = $x_i$ appears in the monomial
    \item circle $i$ is labeled $+$ in the marked resolution
        = $x_+$ in position $i$ in the pure tensor
        = $x_i$ does not appear in the monomial
\end{itemize}
Marked resolutions are useful in diagrammatic computations; 
Grigsby-Licata-Wehrli use the pure tensors in \cite{GLW-dt};
\SSS\ use monomials in \cite{SSS}.

Let $\vec{x}$ be a pure tensor in $\Kc(\D(L))$, located above vertex $u \in \boolcube^n$. Let $n_+$ (resp.\ $n_-$) be the number of positive (resp.\ negative) crossings in the oriented link diagram $\D$. 
\begin{itemize}
\item The \emph{homological grading} $\gr_h(\vec{x})$ is given by $|u| - n_-$.
\item The \emph{quantum grading} $\gr_q(\vec{x})$ is given by $|u| + \#(x_+) - \#(x_-) + n_+ - 2n_-$. 
\end{itemize}

In the presence of the basepoints $\mathbb{X}$ and $\mathbb{O}$, we tweak the definition above to reflect the position of the circles with respect to these basepoints. 
In a given complete resolution $\D_u$, the circles $S_u$ fall into two categories: those that separate the basepoints, and those that do not. We refer to the former as \emph{nontrivial} circles, and the latter as \emph{trivial} circles. To notationally distinguish these circles in the pure tensor setting, in place of $x_\pm$ in the previous paragraphs, we write $v_\pm$ for the $\pm$-labeling of a nontrivial circle and $w_\pm$ for the $\pm$-labeling of a trivial circle. (The quantum grading does not depend on the triviality of circles, e.g.\ $\#(x_-) = \#(v_-) + \#(w_-)$.)

We can now associate a third grading to the distinguished generators. 
\begin{itemize}
    \item The \emph{$k$-grading} $\gr_k(\vec{x})$ is given by
        $\#(v_+) - \#(v_-)$.
\end{itemize}

The Khovanov differential preserves the homological and quantum gradings, producing a bigraded homology theory. In the annular context, the differential is filtered (non-increasing) with respect to the $k$-grading.

\subsection{\Szabo's geometric spectral sequence}

In \cite{Szabo-gss}, \Szabo\ introduced his geometric spectral sequence in Khovanov homology. The underlying chains are the same as those described in Section \ref{subsec:Khovanov-homology}. The differentials are described in terms of \emph{resolution configurations} of index $\geq 1$, described below. Resolution configurations describe individual components of the differential by encoding two distinguished generators -- one ``before" picture and one ``after" picture -- and what arcs are used to surger to the ``before" picture to arrive at the ``after" picture. If a resolution configuration displays the ordered pair of distinguished generators (before, after) = $(\vec{x}, \vec{y})$, then $\vec{y}$ appears in the image of $\vec{x}$ under the differential. 
Figure \ref{fig:Szaboconfigurations} illustrates the five types of resolution configurations that contribute to the Szabo differential.

\begin{figure}
    \centering
    \begin{subfigure}{0.5\textwidth}
    \centering
    \includegraphics[width=.8\textwidth]{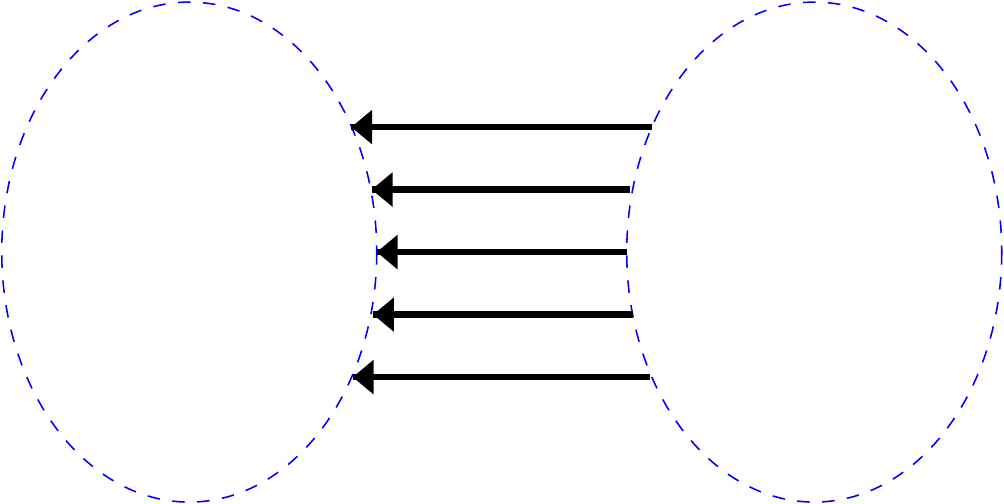}
    \caption{Type A}
    \label{fig:typeA}
    \end{subfigure}%
    \begin{subfigure}{0.5\textwidth}
    \centering
    \includegraphics[width=0.8\linewidth]{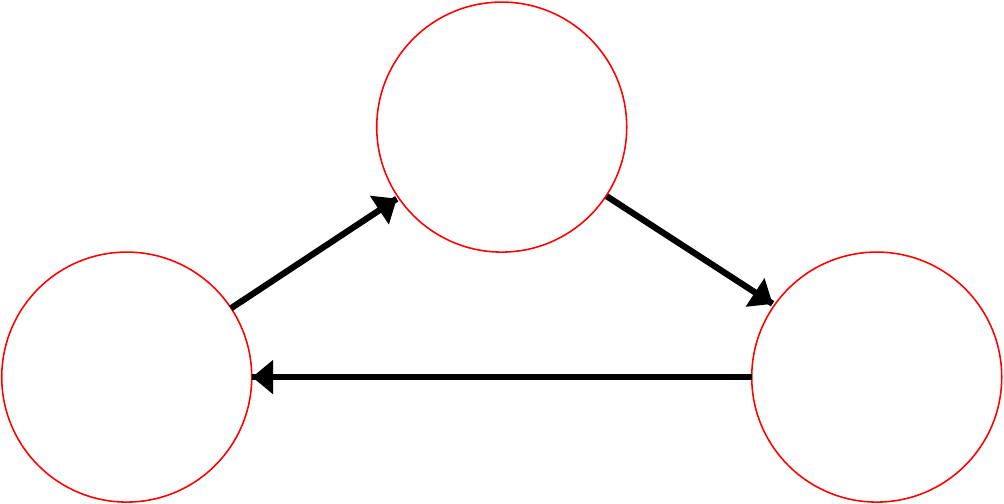}
    \caption{Type B}
    \label{fig:typeB}
    \end{subfigure}%
    \\
    \begin{subfigure}{0.5\textwidth}
    \centering
    \includegraphics[width=\linewidth]{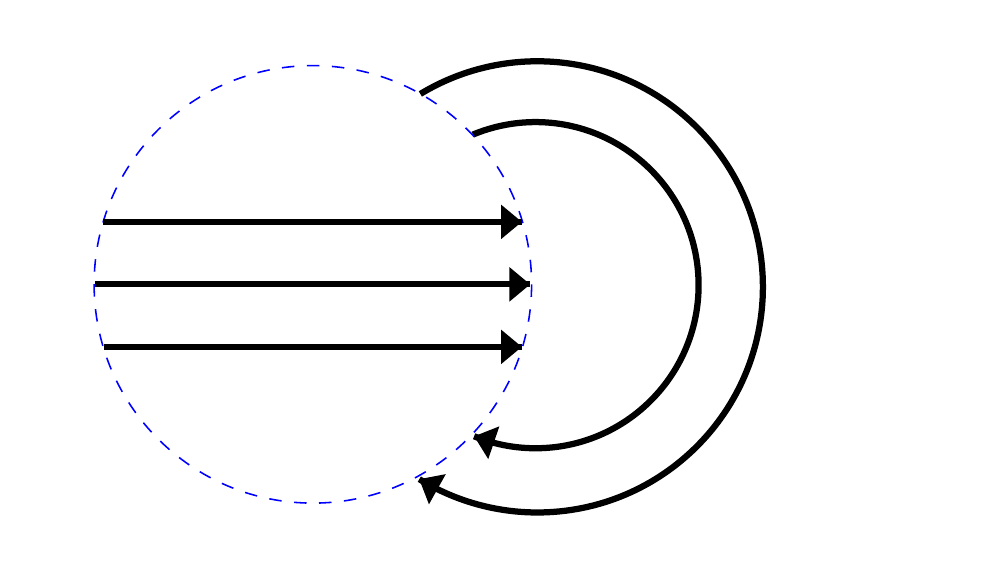}
    \caption{Type C}
    \label{fig:typeC}
    \end{subfigure}%
    \begin{subfigure}{0.5\textwidth}
    \centering
    \includegraphics[width=0.8\linewidth]{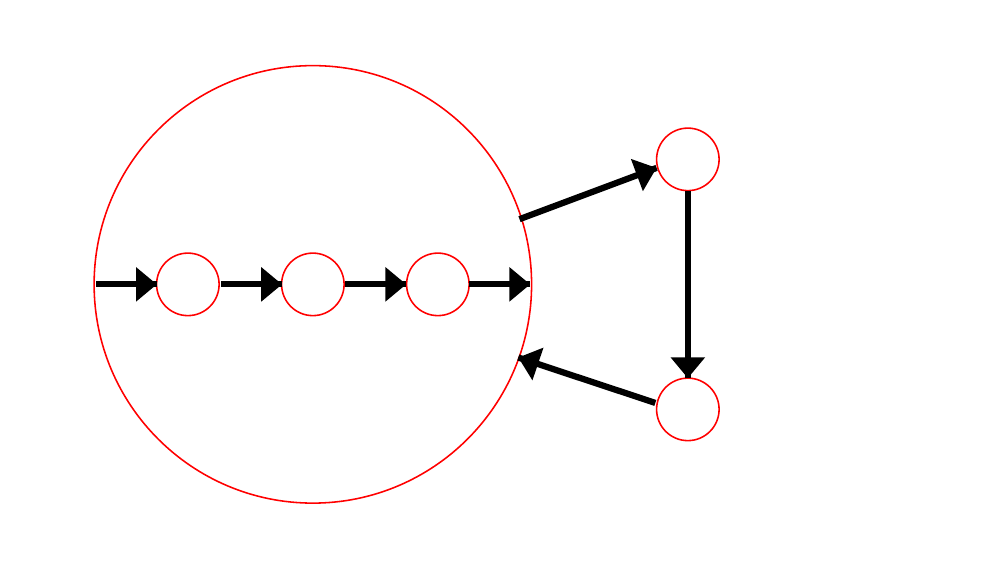}
    \caption{Type D}
    \label{fig:typeD}
    \end{subfigure}%
    \\
    \begin{subfigure}{0.5\textwidth}
    \centering
    \includegraphics[width=0.8\linewidth]{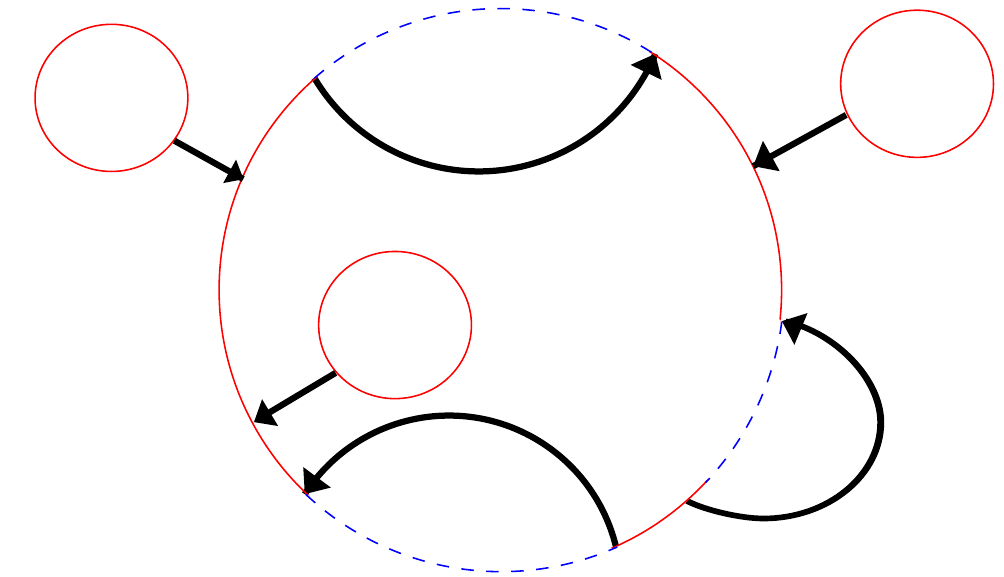}
    \caption{Type E}
    \label{fig:typeE1}
    \end{subfigure}%
    \begin{subfigure}{0.5\textwidth}
    \centering
    \includegraphics[width=0.8\linewidth]{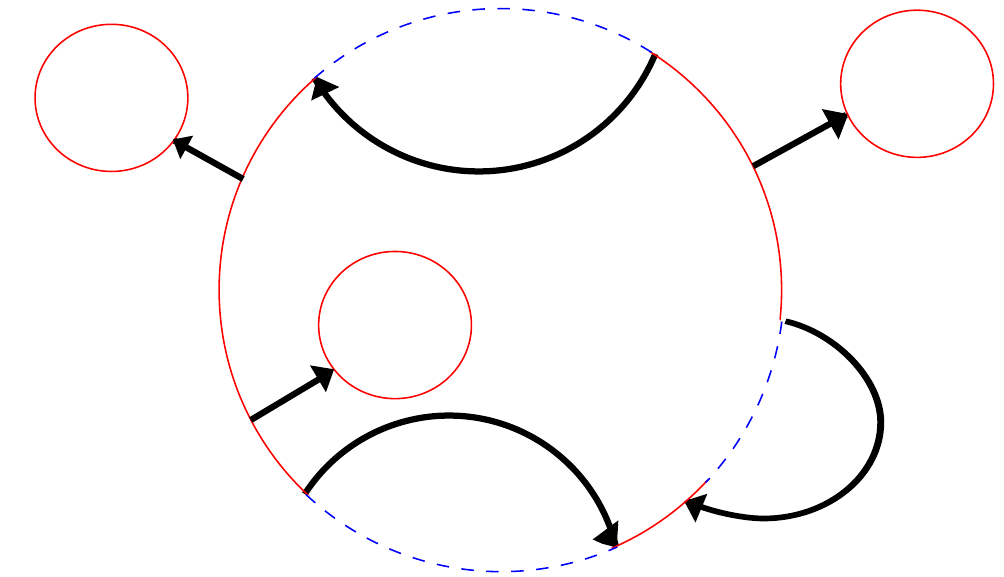}
    \caption{Type E}
    \label{fig:typeE2}
    \end{subfigure}%
    \caption{The oriented configurations that contribute to the Szab\'o differential}
    \label{fig:Szaboconfigurations}
\end{figure}

\Szabo's theory is $\gr_h$-filtered (cf.\ Proposition \ref{lem:tridegree}). The index 1 resolution configurations correspond to the original Khovanov differential. Thus $\gr_h$-filtration gives rise to the ``geometric spectral sequence" from Khovanov homology.

\subsection{The \SSS\ perturbation of the geometric spectral sequence}
\label{subsec:sss-defn}

The \SSS\ perturbation adds further differentials to the \Szabo\ complex, inspired by Bar-Natan's deformation of Khovanov homology \cite{BN-tangles-cobordisms}. The index 1 resolution configurations correspond to the original Bar-Natan deformation differentials. 

The total chain complex of Sarkar-Seed-Szab\'{o} \cite{SSS} is
\[\mathcal{C}_\tot = (\mathcal{C} \otimes \F[H,W], \delta_\tot = d_1 + W d_2 + W^2 d_3 + \cdots + H h_1 + HW h_2 + HW^2 h_3 + \cdots)\]
where the chain groups $\CC$ are the same as the Khovanov chain groups and the total differential lies in (homological, quantum)-grading (1,0). The maps $d_i$ correspond to the Szab\'o differential maps, and we refer the reader to \cite{SSS} for the definition of the maps $h_i$.  The Bar-Natan deformation of the Khovanov chain complex can be obtained from $\mathcal{C}_\tot$ by setting W = 0, and the Szab\'{o} chain complex can be obtained  from  $\mathcal{C}_\tot$ by setting $H = 0$. In this paper, we are mainly interested in the filtered version $(\CC_\ftot, \delta_\ftot)$:
\[\CC_\ftot = \CC_\tot/\{H=W=1\} = ( \mathcal{C}, d_1 + d_2 + d_3 + \cdots + h_1 + h_2 + h_3 + \cdots).\]

\section{Annular \SSS\ invariants}

\subsection{The annular filtration}
\label{subsec:annular-filtration}

From the data of the diagram $\D(L) \subset S^2 \setminus \{ \BBO, \BBX\}$ of an oriented annular link $L \subset A \times I$, we will use the construction of Sarkar-Seed-Szab\'{o} \cite{SSS}, building on  constructions of Khovanov \cite{Khovanov-00}, Bar-Natan \cite{BN-tangles-cobordisms}, and Szab\'{o} \cite{Szabo-gss}, to define a $\Z \oplus \Z \oplus \Z$-filtered chain complex as follows.

We obtain a third grading (the $k$ grading) on the Sarkar-Seed-Szab\'o chain complex $\CC_{tot}$, as follows. In the language of \cite{SSS}, a ``$0$'' (respectively, ``$1$'') marking on a starting circle denotes that the circle appears (respectively, does not appear) in the starting monomial. In a break with their conventions, we write $+$ and $-$ instead of the ``$0$" and ``$1$" that Sarkar-Seed-Szab\'o use on starting circles. (This change is made to reserve the use of numbers for the $k$ grading). We keep the \cite{SSS} notation for the ending monomial: A dashed blue (respectively, solid red) ending circle in a complete resolution of $\D(L)$ denotes that the ending circle does not appear (respectively, does appear) in the ending monomial. Recall that a circle are either \emph{trivial} (respectively, \emph{nontrivial}) if it intersects any fixed oriented arc $\gamma$ from $\BBX$ to $\BBO$ in an even (respectively, odd) number of points. We define the $k$ grading of a basis element of $\CC$ (or a square-free monomial in circles in a complete resolution) to be the number of nontrivial circles not appearing in the monomial minus the number of nontrivial circles appearing in the monomial. 

\begin{lemma}
\label{lem:tridegree}
The (filtered) total differential $\delta_\ftot$ decomposes into $(\gr_h,\gr_q,\gr_k)$-homogeneous pieces. In particular, 
\begin{itemize}
\item $d_i$ increases $\gr_h$ and $\gr_q$ by $i$ and $2i-2$, respectively, and decomposes into three $\gr_k$-homogeneous summands $d_{i,-2}, d_{i,0},$ and $d_{i,2}$, corresponding to a shift in $\gr_k$ by an element of $-2$, $0$, and $2$, respectively. Furthermore,  $d_{1,2} = 0$.  
\item $h_i$ increases $\gr_h$ and $\gr_q$ by $i$ and $2i$, respectively, and decomposes into $\gr_k$-homogeneous summands $h_{i, j}$ corresponding to a shift in $\gr_k$ by an even integer $j \in [0, i+1]$.  
\end{itemize}
Therefore we may write $\delta_\ftot = \sum_i (d_i + h_i)$, where  the  $d_i$ and $h_i$ decompose  into summands
\begin{itemize} 
\item $d_i = d_{i,-2} + d_{i,0} + d_{i, 2}$, with  $d_{1,2} = 0$
\item $h_i = h_{i, 0} + h_{i, 2} + h_{i, 4} + \cdots + h_{i, i+1}$ 
\end{itemize}
and each summand is $(\gr_h,\gr_q,\gr_k)$-homogeneous.

\begin{proof}
The $\gr_h$- and $\gr_q$-homogeneity of $d_i$ and $h_i$ are given in \cite{SSS}. We will check the $k$-grading ($\gr_k$) shifts of $d_i$ and $h_i$. First, note that $d_1$ coincides precisely with the Khovanov differential. In \cite[Lemma 3]{GLW-dt} it is shown that $d_1$ splits into homogeneous summands that shift the $k$-grading by either $0$ or $-2$. Thus, we have $d_{1,2} = 0$ as claimed.

We next check the $k$-grading $\gr_k$ shift by $d_i$. We will consider all possible configurations of the Szab\'o differential (see Figure \ref{fig:Szaboconfigurations}). For each possible configuration and possible placement of the basepoints $\BBX$ and $\BBO$, we compute the $k$-grading difference of the ending and starting monomial associated to the configuration. If $\BBX$ is located in the same region of the plane as $\BBO$, then the $k$-grading shift is zero. In the following casework, we leave out this possibility. 

\begin{enumerate}
    \item In a Type-A configuration (Figure \ref{fig:typeA}), there are two starting circles, with some (at least one) parallel arcs between them. 
    Note that after removing the two starting circles from $S^2$, there are three connected components: two disks and an annulus. Up to symmetry, the basepoint $\BBO$ is either inside a disk region bounded by a starting circle (Figure \ref{fig:type-a2}) or inside the unique annulus bounded by the two starting circles (Figure \ref{fig:type-a1}). In every possible type A configuration, the $k$-grading shift is either $-2$, $0$, or $2$. See Figure \ref{fig:typeAx}.
    
    \begin{figure}
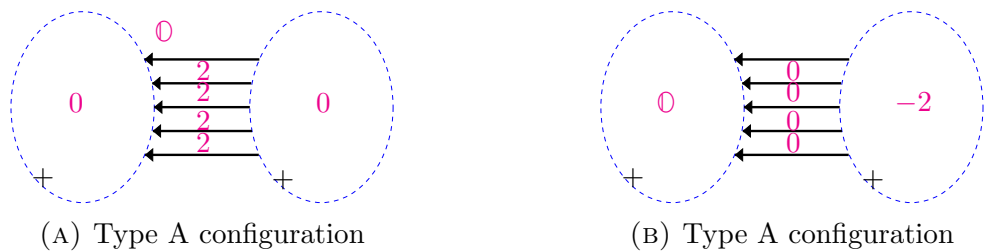

        \begin{subfigure}{0.45\textwidth}
        \centering
        \labellist
        \small
        \pinlabel	\textcolor{magenta}{$\mathbb{O}$}	at 	116	129.6
        \pinlabel	$\textcolor{magenta}{0}$	at 	49.6	76
        \pinlabel	$\textcolor{magenta}{2}$	at 	145.6	84
        \pinlabel	$\textcolor{magenta}{2}$	at 	145.6	100
        \pinlabel	$\textcolor{magenta}{2}$	at 	145.6	62.4
        \pinlabel	$\textcolor{magenta}{2}$	at 	145.6	45.6
        \pinlabel	$\textcolor{magenta}{0}$	at 	236	76
        \pinlabel	$+$	at 	24.8	19.2
        \pinlabel	$+$	at 	205.6	17.6
        \endlabellist
        \includegraphics[scale=.5]{type-a}
        \caption{Type A configuration}
        \label{fig:type-a1}
        \end{subfigure}%
        \quad 
        \begin{subfigure}{0.45\textwidth}
        \centering
        \labellist
        \small
        \pinlabel	$\textcolor{magenta}{\mathbb{O}}$	at 	49.6	76
        \pinlabel	$\textcolor{magenta}{0}$	at 	145.6	84
        \pinlabel	$\textcolor{magenta}{0}$	at 	145.6	100
        \pinlabel	$\textcolor{magenta}{0}$	at 	145.6	62.4
        \pinlabel	$\textcolor{magenta}{0}$	at 	145.6	45.6
        \pinlabel	$\textcolor{magenta}{-2}$	at 	236	76
        \pinlabel	$+$	at 	24.8	19.2
        \pinlabel	$+$	at 	205.6	17.6
        \endlabellist
        \includegraphics[scale=.5]{type-a}
        \caption{Type A configuration}
        \label{fig:type-a2}
        \end{subfigure}
        \caption{Type A configurations with $k$-grading shifts corresponding to the placement of $\BBX$ basepoint}
        \label{fig:typeAx}
    \end{figure}
    \item A Type-B configuration (Figure \ref{fig:typeB}) is the dual of a Type-A configuration. Up to symmetry, there are two possibilities for the placement of $\BBO$: either outside the starting circles (Figure \ref{fig:type-b2}) or inside a starting circle (Figure \ref{fig:type-b1}). In every possible type B configuration, the $k$-grading shift is either $-2$, $0$, or $2$. See Figure \ref{fig:typeBx}.
    
    \begin{figure}
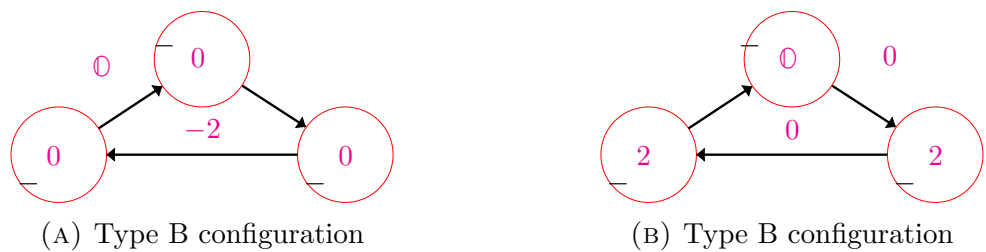

        \begin{subfigure}{0.45\textwidth}
        \centering
        \labellist
        \small
        \pinlabel	\textcolor{magenta}{$\mathbb{O}$}	at 	69	103
        \pinlabel	$\textcolor{magenta}{0}$	at 	141.6	109.6
        \pinlabel	$\textcolor{magenta}{0}$	at 	32.8	36
        \pinlabel	$\textcolor{magenta}{0}$	at 	252.8	36
        \pinlabel	$\textcolor{magenta}{-2}$	at 	144.8	55.2
        \pinlabel	$-$	at 	229.6	14.4
        \pinlabel	$-$	at 	13.6	14.4
        \pinlabel	$-$	at 	115.8	118.4
        \endlabellist
        \includegraphics[scale=.5]{type-b}
        \caption{Type B configuration}
        \label{fig:type-b2}
        \end{subfigure}%
        \quad
        \begin{subfigure}{0.45\textwidth}
        \centering
        \labellist
        \small
        \pinlabel	\textcolor{magenta}{$\mathbb{O}$}	at 	141.6 	109.6
        \pinlabel	$\textcolor{magenta}{2}$	at 	32.8	36
        \pinlabel	$\textcolor{magenta}{2}$	at 	252.8	36
        \pinlabel	$\textcolor{magenta}{0}$	at 	218.4	111.2
        \pinlabel	$\textcolor{magenta}{0}$	at 	144.8	55.2
        \pinlabel	$-$	at 	229.6	14.4
        \pinlabel	$-$	at 	13.6	14.4
        \pinlabel	$-$	at 	112.8	118.4
        \endlabellist
        \includegraphics[scale=.5]{type-b}
        \caption{Type B configuration}
        \label{fig:type-b1}
        \end{subfigure}
        \caption{Type B configurations with $k$-grading shifts}
        \label{fig:typeBx}
    \end{figure}
    \item In a Type-C configuration (Figure \ref{fig:typeC}), up to symmetry, there are two possibilities for the placement of $\BBO$ (Figure \ref{fig:type-c-symm}): the basepoint $\BBO$ is either between two parallel arcs or not. 
    For each possible type C configuration, the $k$-grading shift is either $0$ or $2$, depending on the location of $\BBX$. See Figure \ref{fig:typeCx}.
    \begin{figure}
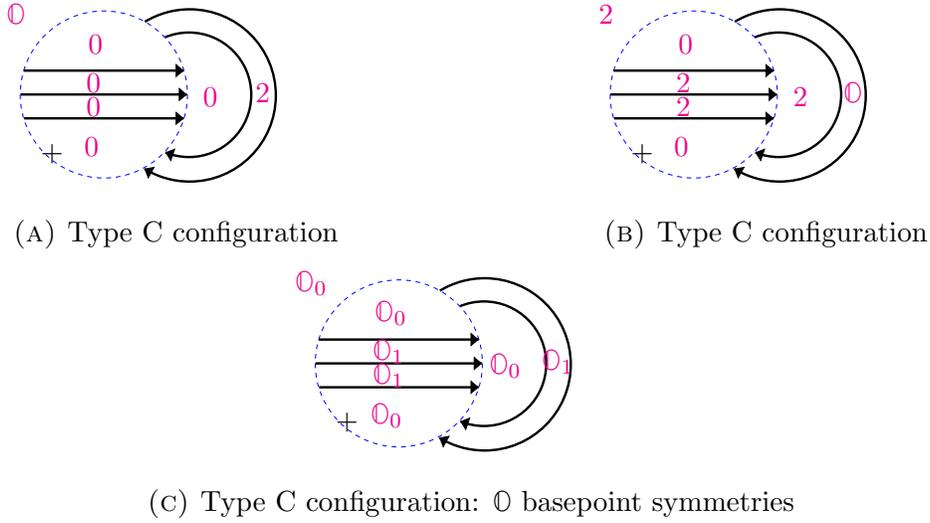

        \begin{subfigure}{0.45\textwidth}
        \centering
        \labellist
        \small
        \pinlabel	\textcolor{magenta}{$\mathbb{O}$}	at 	24	141.6
        \pinlabel	\textcolor{magenta}{${0}$}	at 	84	119.2
        \pinlabel	\textcolor{magenta}{${0}$}	at 	80.8	41.6
        \pinlabel	\textcolor{magenta}{${0}$}	at 	170.4	79.2
        \pinlabel	\textcolor{magenta}{${0}$}	at 	82.4	89.6
        \pinlabel	\textcolor{magenta}{${0}$}	at 	82.4	72
        \pinlabel	\textcolor{magenta}{${2}$} 	at 	210	82.4
        \pinlabel	$+$	at 	51.2	36.8
        \endlabellist
        \includegraphics[scale=.5]{type-c}
        \caption{Type C configuration }
        \label{fig:type-c0}
        \end{subfigure}%
        \quad
        \begin{subfigure}{0.45\textwidth}
        \centering
        \labellist
        \small
        \pinlabel	\textcolor{magenta}{$\textcolor{magenta}{2}$}	at 	24	141.6
        \pinlabel	\textcolor{magenta}{$\textcolor{magenta}{0}$}	at 	84	119.2
        \pinlabel	\textcolor{magenta}{$\textcolor{magenta}{0}$}	at 	80.8	41.6
        \pinlabel	\textcolor{magenta}{$\textcolor{magenta}{2}$}	at 	170.4	79.2
        \pinlabel	\textcolor{magenta}{$\textcolor{magenta}{2}$}	at 	82.4	89.6
        \pinlabel	\textcolor{magenta}{$\textcolor{magenta}{2}$}	at 	82.4	72
        \pinlabel	\textcolor{magenta}{$\mathbb{O}$} 	at 	210	82.4
        \pinlabel	$+$	at 	51.2	36.8
        \endlabellist
        \includegraphics[scale=.5]{type-c}
        \caption{Type C configuration }
        \label{fig:type-c1}
        \end{subfigure}%
        \\
        \begin{subfigure}{\textwidth}
        \centering
        \labellist
        \small
        \pinlabel	\textcolor{magenta}{$\mathbb{O}_0$}	at 	24	141.6
        \pinlabel	\textcolor{magenta}{$\mathbb{O}_0$}	at 	84	119.2
        \pinlabel	\textcolor{magenta}{$\mathbb{O}_0$}	at 	80.8	41.6
        \pinlabel	\textcolor{magenta}{$\mathbb{O}_0$}	at 	170.4	79.2
        \pinlabel	\textcolor{magenta}{$\mathbb{O}_1$}	at 	82.4	89.6
        \pinlabel	\textcolor{magenta}{$\mathbb{O}_1$}	at 	82.4	72
        \pinlabel	\textcolor{magenta}{$\mathbb{O}_1$} 	at 	210	82.4
        \pinlabel	$+$	at 	51.2	36.8
        \endlabellist
        \includegraphics[scale=.5]{type-c}
        \caption{Type C configuration: $\mathbb{O}$ basepoint symmetries}
        \label{fig:type-c-symm}
        \end{subfigure}%
        \caption{Type C configurations with $k$-grading shifts}
        \label{fig:typeCx}
    \end{figure}
    \item A Type-D configuration (Figure \ref{fig:typeD}) is the mirror of the dual of a Type-C configuration. Thus, the $k$-grading shift candidates are the same as those of a type C configuration: $0$ or $2$.

    \item In a Type-E configuration (Figure \ref{fig:typeE1} or Figure \ref{fig:typeE2}), up to symmetry there are three possibilities for the location of the $\BBO$ basepoint: see Figure \ref{fig:type-e-symm}. For each possible $\BBX$ and $\BBO$ pairing, the $k$-grading shift is either $-2$, $0$, or $2$. See Figure \ref{fig:typeEx}.
        
    \begin{figure}[ht]
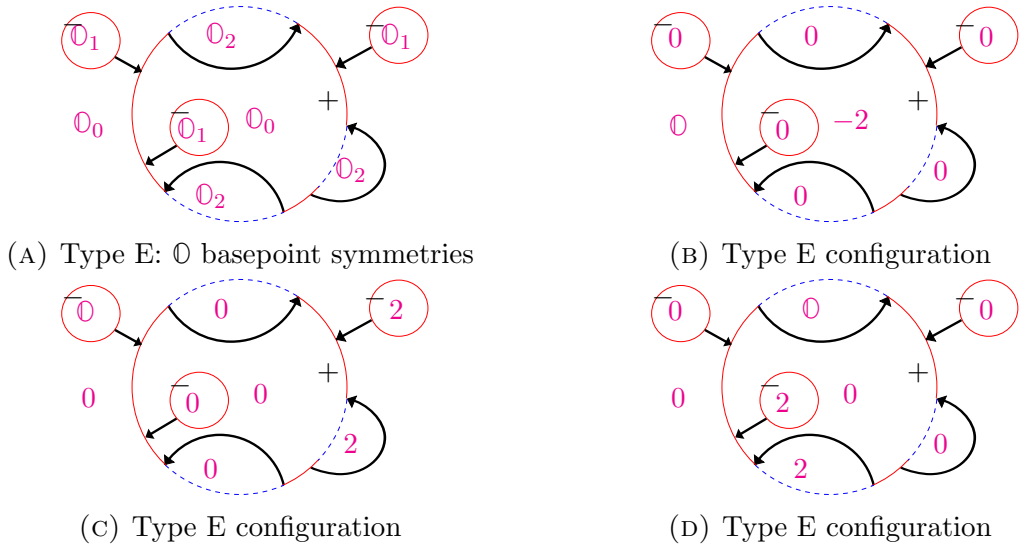

    \begin{subfigure}{0.45\textwidth}
        \centering
        \labellist
        \small
        \pinlabel	\textcolor{magenta}{$\mathbb{O}_0$}	at 	30.4	74.4
        \pinlabel	\textcolor{magenta}{$\mathbb{O}_0$}	at 	160	    77.6
        \pinlabel	\textcolor{magenta}{$\mathbb{O}_2$}	at 	130.4	141.6
        \pinlabel	\textcolor{magenta}{$\mathbb{O}_2$}	at 	228	    40
        \pinlabel	\textcolor{magenta}{$\mathbb{O}_2$}	at 	122.4	20.8
        \pinlabel	\textcolor{magenta}{$\mathbb{O}_1$}	at 	108.8	70.4
        \pinlabel	\textcolor{magenta}{$\mathbb{O}_1$}	at 	28	    140
        \pinlabel	\textcolor{magenta}{$\mathbb{O}_1$}	at 	262.4	140.8
        \pinlabel	$+$	at 	211.2	92.8
        \pinlabel	$-$	at 	246.4	148.8
        \pinlabel	$-$	at 	99.4	83.2
        \pinlabel	$-$	at 	18.64	149.6
        \endlabellist
        \includegraphics[scale=.5]{type-e1}
        \caption{Type E: $\mathbb{O}$ basepoint symmetries}
        \label{fig:type-e-symm}
    \end{subfigure}%
    \quad
    \begin{subfigure}{0.45\textwidth}
        \centering
        \labellist
        \small
        \pinlabel	\textcolor{magenta}{$\mathbb{O}$}	at 	30.4	74.4
        \pinlabel	\textcolor{magenta}{$-2$}	at 	160	77.6
        \pinlabel	\textcolor{magenta}{$0$}	at 	130.4	141.6
        \pinlabel	\textcolor{magenta}{$0$}	at 	228	40
        \pinlabel	\textcolor{magenta}{$0$}	at 	122.4	20.8
        \pinlabel	\textcolor{magenta}{$0$}	at 	108.8	70.4
        \pinlabel	\textcolor{magenta}{$0$}	at 	28	140
        \pinlabel	\textcolor{magenta}{$0$}	at 	262.4	140.8
        \pinlabel	$+$	at 	211.2	92.8
        \pinlabel	$-$	at 	246.4	148.8
        \pinlabel	$-$	at 	99.4	83.2
        \pinlabel	$-$	at 	18.64	149.6
        \endlabellist
        \includegraphics[scale=.5]{type-e1}
        \caption{Type E configuration}
        \label{fig:type-e1}
    \end{subfigure} 
    \\
    \begin{subfigure}{0.45\textwidth}
        \centering
        \labellist
        \small
        \pinlabel	\textcolor{magenta}{$0$}	at 	30.4	74.4
        \pinlabel	\textcolor{magenta}{$0$}	at 	160	77.6
        \pinlabel	\textcolor{magenta}{$0$}	at 	130.4	141.6
        \pinlabel	\textcolor{magenta}{$2$}	at 	228	40
        \pinlabel	\textcolor{magenta}{$0$}	at 	122.4	20.8
        \pinlabel	\textcolor{magenta}{$0$}	at 	108.8	70.4
        \pinlabel	\textcolor{magenta}{$\mathbb{O}$}	at 	28	140
        \pinlabel	\textcolor{magenta}{$2$}	at 	262.4	140.8
        \pinlabel	$+$	at 	211.2	92.8
        \pinlabel	$-$	at 	246.4	148.8
        \pinlabel	$-$	at 	99.4	83.2
        \pinlabel	$-$	at 	18.64	149.6
        \endlabellist
        \includegraphics[scale=.5]{type-e1}
        \caption{Type E configuration}
        \label{fig:type-e2}
    \end{subfigure}%
    \quad
    \begin{subfigure}{.45\textwidth}
        \centering
        \labellist
        \small
        \pinlabel	\textcolor{magenta}{$0$}	at 	30.4	74.4
        \pinlabel	\textcolor{magenta}{$0$}	at 	160	77.6
        \pinlabel	\textcolor{magenta}{$\mathbb{O}$}	at 	130.4	141.6
        \pinlabel	\textcolor{magenta}{$0$}	at 	228	40
        \pinlabel	\textcolor{magenta}{$2$}	at 	122.4	20.8
        \pinlabel	\textcolor{magenta}{$2$}	at 	108.8	70.4
        \pinlabel	\textcolor{magenta}{$0$}	at 	28	140
        \pinlabel	\textcolor{magenta}{$0$}	at 	262.4	140.8
        \pinlabel	$+$	at 	211.2	92.8
        \pinlabel	$-$	at 	246.4	148.8
        \pinlabel	$-$	at 	99.4	83.2
        \pinlabel	$-$	at 	18.64	149.6
        \endlabellist
        \includegraphics[scale=.5]{type-e1}
        \caption{Type E configuration}
        \label{fig:type-e3}
    \end{subfigure}  
    \caption{Type E configurations with $k$-grading shifts}
    \label{fig:typeEx}
    \end{figure} 

\end{enumerate}


To see that $\gr_k$ only ever changes in even number increments, consider the mod 2 nesting of circles. Observe that, with respect to an arc $\gamma$ drawn from $\BBO$ to $\BBX$, a surgery does not change the mod 2 intersection number between $\gamma$ and the circles.

To see the $k$-grading shifts of $h_i$, recall that the  resolution configurations $(R, x, y)$ contributing to $h_i$ are disjoint unions of trees and dual trees (see \cite[Definition 3.2]{SSS}). We first focus on computing the $k$-grading shift of a single tree. Recall that a tree is a labeled resolution configuration of some index $i \geq 0$, with exactly
$(i+1)$ starting circles, exactly one ending circle, and no passive circles (unless $i=0$), and with all the starting
circles appearing in the starting monomial, and the ending circle appearing in the ending monomial. Therefore, the initial $k$-grading must be a non-positive integer in the interval $[-(i+1), 0]$, while the ending $k$-grading is either $0$ or $-1$. Moreover, it is easy to check that $k$-grading increases by twice the number of nontrivial node circles in the tree. Therefore, the $\gr_k$ shift must be an \emph{even} integer. Thus, an index $i$ tree configuration contributes to $h_i$ (increasing the homological grading $\gr_h$ by $i$) and must increase the $k$-grading by an even integer in $[0, i+1]$.

Next, observe that a configuration and its dual admit the same $k$-grading shifts. Therefore, the $k$-grading shift of a single dual tree of index $i$ is also an even integer in $[0, i+1]$. 

Finally we analyze the behavior of the $k$-grading shift under disjoint unions of trees and dual trees.
Given any resolution configuration $(R, x, y)$, we may find an ``equator" circle $C$ on the sphere which does not intersect any of the arcs or circles in the resolution configuration. This ``equator" circle $C$ does not belong to the circles in $(R, x, y)$. Moreover, $C$ divides the sphere into two disks $D_1$ and $D_2$ containing the two resolution configurations $(R_1, x_1, y_1)$ and $(R_2, x_2, y_2)$ respectively. We have $(R, x, y) = ( R_1 \sqcup R_2, x_1x_2, y_1y_2)$.
\begin{enumerate}
    \item If the basepoints $\BBO$ and $\BBX$ lie in the same disk, say $D_1$, then the $k$-grading shift of the resolution configuration $(R, x, y)$ is the same as the $k$-grading shift of a resolution configuration $(R_1, x_1, y_1)$. 
    \item If the basepoints $\BBO$ and $\BBX$ lie in the disks $D_1$ and $D_2$ respectively, then the overall $k$-grading shift of the resolution configuration $(R, x, y)$ equals the sum of the following two quantities:
    \begin{itemize}
        \item the $k$-grading shift $(R_1, x_1, y_1)$ where $\BBX$ is placed at ``the point at infinity" of $D_1$ 
        \item the $k$-grading shift $(R_2, x_2, y_2)$ where $\BBO$ is placed at ``the point at infinity" of $D_2$ 
    \end{itemize}
    This can be seen by dividing the arc from $\BBX$ to $\BBO$ into two pieces. 
\end{enumerate}
It follows that $h_i$ increases $\gr_k$ by an even integer in $[0, i+1]$.
\end{proof}
\end{lemma}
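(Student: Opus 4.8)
The plan is to take the $\gr_h$- and $\gr_q$-homogeneity of each $d_i$ and $h_i$ as already established in \cite{SSS}, so that the entire remaining content is the behavior of the $k$-grading. Since $\delta_\ftot$ is a sum of terms, each described by a single oriented resolution configuration $(R,x,y)$, it suffices to compute, for every configuration appearing in $d_i$ or in $h_i$ and every combinatorial placement of the basepoints $\BBO$ and $\BBX$ in the complementary regions of $S^2$, the difference $\gr_k(y)-\gr_k(x)$. For $i=1$ the map $d_1$ is the Khovanov differential, and the computation of Grigsby-Licata-Wehrli \cite[Lemma 3]{GLW-dt} already shows that it splits into summands shifting $\gr_k$ by $0$ or $-2$; in particular $d_{1,2}=0$.

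For $d_i$ with $i\geq 1$, I would run through the five families of \Szabo\ configurations depicted in Figure \ref{fig:Szaboconfigurations}. Since Type B is the dual of Type A and a configuration and its dual realize the same set of $\gr_k$-shifts, and since Type D is the mirror of the dual of a Type C configuration (so its $\gr_k$-shifts lie among those of Type C), only Types A, C, and E require independent work. In each of these the complement of the starting circles in $S^2$ has only a few regions, so up to the symmetry group of the configuration there are just a handful of placements of $\BBO$; for each, placing $\BBX$ in each remaining region and reading off which circles are nontrivial (odd intersection number with a fixed arc $\gamma$ from $\BBX$ to $\BBO$) and which circles appear in the starting and ending monomials determines $\gr_k(y)-\gr_k(x)$, and in every case the answer is $-2$, $0$, or $2$, pinned down by the region containing $\BBX$ once $\BBO$ is fixed. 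The evenness is seen uniformly: $\gr_k$ is congruent mod $2$ to the number of nontrivial circles, which is congruent mod $2$ to $\sum_C |C\cap\gamma|$, and a single band surgery alters the symmetric difference of the circle system by the boundary of a disk, hence preserves $\sum_C |C\cap\gamma|$ mod $2$. This yields the three summands $d_{i,-2}$, $d_{i,0}$, $d_{i,2}$.

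For $h_i$, recall from \cite[Definition 3.2]{SSS} that the contributing configurations are disjoint unions of trees and dual trees, so I would argue in three steps. First, a single index-$i$ tree has $i+1$ starting circles (all appearing in the starting monomial), one ending circle (appearing in the ending monomial), and no passive circles, whence $\gr_k$ of the source lies in $[-(i+1),0]$ and $\gr_k$ of the target is $0$ or $-1$; tracking the nontrivial node circles shows the shift equals twice their number, an even integer in $[0,i+1]$. Second, duality gives the same conclusion for a single dual tree. Third, for a disjoint union I would choose an ``equator'' circle $C\subset S^2$ meeting none of the arcs or circles and splitting the configuration as $(R_1,x_1,y_1)\sqcup(R_2,x_2,y_2)$ across $S^2 = D_1\cup D_2$: if $\BBO$ and $\BBX$ lie in the same disk the total shift equals that of the sub-configuration there, while if they lie in different disks the total shift is the sum of the two sub-shifts, each computed with the absent basepoint pushed to the relevant boundary, by subdividing $\gamma$ at its crossing of $C$. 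Iterating over the components gives the summands $h_{i,0},h_{i,2},\ldots,h_{i,i+1}$. The main obstacle is just the thoroughness of the $d_i$ case analysis: one must enumerate basepoint placements up to each configuration's symmetries without making monomial-bookkeeping errors; the $h_i$ disjoint-union step is the subtlest conceptual point but is cleanly handled by the equator argument.
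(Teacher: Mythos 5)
Your proposal is correct and follows essentially the same route as the paper's own proof: the $\gr_h,\gr_q$ behavior cited from \cite{SSS}, the GLW computation for $d_1$ giving $d_{1,2}=0$, a configuration-by-configuration casework over the \Szabo\ types for $d_i$, the mod-$2$ intersection-number-with-$\gamma$ argument for parity, and the tree/dual-tree/equator-circle decomposition for $h_i$. The only small deviation is that you dispatch Type B entirely by duality with Type A, whereas the paper tabulates Type B explicitly (while still noting it is the dual of Type A); this is a harmless streamlining, since the paper itself invokes the principle that a configuration and its dual realize the same $\gr_k$-shifts when it treats dual trees.
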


\begin{corollary}
The $\gr_h$, $\gr_q$, and $\gr_q-\gr_k$ gradings on $\CC$ endows $(\CC_{ftot}, \delta_{ftot})$ with the structure of a $(\Z \oplus \Z \oplus \Z)$-filtered complex. 
\begin{proof}
For each $(a,b,c) \in \Z \oplus \Z \oplus \Z$, define  
\[\CF_{a,b,c} = \text{Span} \{ x \in \CC \ | \ (\gr_h, \gr_q, \gr_{q-k})(x)  \geq (a,b, c)\}\]
Lemma \ref{lem:tridegree} tells us that $\delta_{ftot}$ is non-decreasing with respect to the $\gr_h$, $\gr_q$, and $\gr_q-\gr_k$ gradings.  So $\CF_{a,b,c}$ is a subcomplex for each $(a,b,c) \in (\Z \oplus \Z \oplus \Z)$. Moreover, $(a',b',c') \geq (a,b,c) \in (\Z \oplus \Z \oplus \Z)$ implies $\CF_{a',b',c'} \subset \CF_{a,b,c}$, as desired. 
\end{proof}
\end{corollary}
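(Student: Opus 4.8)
The plan is to verify directly that the three gradings $\gr_h$, $\gr_q$, and $\gr_{q-k}$ each give rise to a descending filtration on $\CC$ that is preserved by $\delta_\ftot$, and then to assemble these into a single $(\Z\oplus\Z\oplus\Z)$-filtration. First I would fix, for each triple $(a,b,c)\in\Z\oplus\Z\oplus\Z$, the submodule $\CF_{a,b,c}$ spanned by those distinguished generators $x$ with $(\gr_h,\gr_q,\gr_{q-k})(x)\geq(a,b,c)$ in the product partial order, exactly as in the statement. The two things to check are (i) each $\CF_{a,b,c}$ is a subcomplex, i.e.\ $\delta_\ftot(\CF_{a,b,c})\subseteq\CF_{a,b,c}$, and (ii) the filtration is nested compatibly with the partial order, i.e.\ $(a',b',c')\geq(a,b,c)$ forces $\CF_{a',b',c'}\subseteq\CF_{a,b,c}$. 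Point (ii) is immediate from the definition, since a larger lower bound is a stronger constraint on the generators.

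For point (i), the key input is Lemma~\ref{lem:tridegree}: it states that $\delta_\ftot=\sum_i(d_i+h_i)$ with $d_i=d_{i,-2}+d_{i,0}+d_{i,2}$ and $h_i=\sum_{j\,\mathrm{even},\,0\le j\le i+1}h_{i,j}$, each summand $(\gr_h,\gr_q,\gr_k)$-homogeneous. I would read off from this that every homogeneous summand of $\delta_\ftot$ shifts $\gr_h$ by some $i\geq 1$ (hence $\geq 0$), shifts $\gr_q$ by $2i-2\geq0$ (for the $d_i$) or $2i>0$ (for the $h_i$), and shifts $\gr_{q-k}$ by a nonnegative amount: for $d_{i,\epsilon}$ the $\gr_{q-k}$-shift is $(2i-2)-\epsilon$ with $\epsilon\in\{-2,0,2\}$ and (crucially) $\epsilon=2$ only occurs when $i\geq2$ so that $2i-2\geq2$, using $d_{1,2}=0$; for $h_{i,j}$ it is $2i-j$ with $j\leq i+1\leq 2i$ (valid since $i\geq1$), hence again $\geq0$. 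Therefore each homogeneous piece of $\delta_\ftot$ is non-decreasing in all three of $\gr_h$, $\gr_q$, $\gr_{q-k}$ simultaneously, so it maps $\CF_{a,b,c}$ into $\CF_{a,b,c}$; summing over the finitely many pieces, so does $\delta_\ftot$. That $\delta_\ftot$ is a differential ($\delta_\ftot^2=0$) is inherited from the \SSS\ construction, and there is nothing further to check.

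The only mildly delicate point — and the one I would present most carefully — is confirming that the $\gr_{q-k}$-shift is genuinely $\geq0$ on every summand, since $\gr_q$ and $\gr_k$ both increase and one must make sure the difference does not go negative; this is where the bounds $j\le i+1$ for $h_i$ and the vanishing $d_{1,2}=0$ for the troublesome $\epsilon=2$ case of $d_1$ are used, and both are exactly the facts recorded in Lemma~\ref{lem:tridegree}. Everything else is bookkeeping: the filtration is exhaustive and Hausdorff because each generator has well-defined finite tridegree and there are finitely many generators, so the filtration levels eventually stabilize to $\CC$ and to $0$. I expect no real obstacle here — the content is entirely in Lemma~\ref{lem:tridegree}, and this corollary is its formal repackaging as a statement about $\catFilt$.
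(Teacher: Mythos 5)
Your proposal is correct and takes essentially the same approach as the paper: define the filtration pieces $\CF_{a,b,c}$ exactly as stated and invoke Lemma~\ref{lem:tridegree}. The paper's proof summarizes the non-decreasing claim in one sentence, while you unpack the bookkeeping (correctly noting that the borderline case $\epsilon=2$ is saved by $d_{1,2}=0$ and that $j\le i+1\le 2i$ handles the $h_{i,j}$) — this is just a more explicit rendering of the same argument, not a different route.
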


Having considered the $(\gr_h, \gr_q, \gr_k)$ triple degree of all the elementary cobordisms (see Lemma \ref{lem:tridegree}), we can now define a two-parameter family of gradings $\gr_{r,t}$.

\begin{definition}
Let $x$ be a $(\gr_h, \gr_k, \gr_j)$-homogeneous basis element of $\CC$. Let $t \in [0,1]$ and $r \in [0,1].$ 
Define \[\gr_{r,t}(x) = r \cdot \gr_h (x) + (1-r)(\gr_q (x)- t\cdot \gr_k(x)).\]
\end{definition}

We note that $r = 0$ corresponds to $\gr_{0,t} = \gr_q - t \cdot \gr_k$, which agrees with the $j_t$ grading in \cite{GLW-dt} (although their construction uses the Khovanov-Lee complex instead of the Sarkar-Seed-Szab\'o complex). 
The case $t = 1$ corresponds to $\gr_{r,1} = r \cdot \gr_h +(1-r) \cdot \gr_q$, which is precisely the filtration grading used in \SSS\ to define a family of concordance invariants that generalize the Rasmussen $s$ invariant.

\begin{corollary}
For every $r,t \in [0,1]$, the $\gr_{r,t}$ grading endows $(\CC, \delta_{ftot})$ with the structure of a (discrete, bounded) $\R$-filtered complex equipped with a finite filtered graded basis. 
\begin{proof}
Lemma \ref{lem:tridegree} implies that $\delta_{ftot}$ is non-decreasing with respect to the $\gr_{r,t}$ grading for each $r,t \in [0,1]$. It follows that for each $ a \in \R$, the subcomplexes 
\[\CF_a = \text{Span}\{x \in \CC \ | \ \gr_{r,t}(x) \geq a\}\]
endow $\CC$ with the structure of an $\R$-filtered complex. The finiteness of the distinguished filtered graded basis implies that the $\R$-filtration is discrete and bounded.  
\end{proof}
\end{corollary}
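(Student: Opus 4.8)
The plan is to deduce everything from Lemma~\ref{lem:tridegree}, which records the exact $(\gr_h,\gr_q,\gr_k)$-shift of every homogeneous summand of $\delta_{ftot}$. The first observation is that $\gr_{r,t}$ is simply the linear functional $\gr_{r,t} = r\,\gr_h + (1-r)\,\gr_q - t(1-r)\,\gr_k$, so it is constant on each $(\gr_h,\gr_q,\gr_k)$-homogeneous element and therefore assigns a well-defined real value to every element of the distinguished basis of $\CC$ coming from marked complete resolutions. The substantive step is to check that $\delta_{ftot}$ is non-decreasing with respect to $\gr_{r,t}$ for all $r,t\in[0,1]$.

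For that I would run through the summands listed in Lemma~\ref{lem:tridegree}. A summand $d_{i,j}$ shifts $(\gr_h,\gr_q,\gr_k)$ by $(i,\,2i-2,\,j)$ with $j\in\{-2,0,2\}$ and $d_{1,2}=0$, so its $\gr_{r,t}$-shift is $ri+(1-r)\big((2i-2)-tj\big)$; the only potentially negative case is $j=2$, which forces $i\ge 2$, and then $2i-2\ge 2\ge 2t\ge tj$, so the shift is $\ge 0$. A summand $h_{i,j}$ shifts $(\gr_h,\gr_q,\gr_k)$ by $(i,\,2i,\,j)$ with $j$ an even integer in $[0,i+1]$, so its $\gr_{r,t}$-shift is $ri+(1-r)(2i-tj)$, and since $t\le 1$ and $j\le i+1\le 2i$ for $i\ge 1$ we again get a nonnegative quantity. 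The boundary cases $r\in\{0,1\}$ are subsumed in this computation. Hence $\delta_{ftot}$ is filtered with respect to $\gr_{r,t}$.

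Given this, the rest is formal: for $a\in\R$ set $\CF_a = \text{Span}\{x\in\CC \st \gr_{r,t}(x)\ge a\}$. Non-decreasingness of $\delta_{ftot}$ makes each $\CF_a$ a subcomplex, and $\CF_{a'}\subseteq\CF_a$ whenever $a'\ge a$, so $(\CC,\delta_{ftot})$ becomes $\R$-filtered. Since $\D(L)$ has finitely many crossings, $\CC$ has a finite basis, so the set of values $\{\gr_{r,t}(x)\}$ as $x$ ranges over basis elements is finite; this both exhibits the basis as a finite filtered graded basis and forces the filtration to be discrete (finitely many jumps) and bounded. The only real content is the sign check in the middle paragraph, and the one point that needs care there is that the dangerous $\gr_k$-shifts — $+2$ for the $d_i$ and as large as $i+1$ for the $h_i$ — are controlled precisely by the constraints $d_{1,2}=0$ and $j\le i+1$ from Lemma~\ref{lem:tridegree} together with $r,t\le 1$; everything else is bookkeeping.
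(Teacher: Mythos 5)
Your argument is correct and is essentially the paper's argument made explicit: the published proof simply says ``Lemma~\ref{lem:tridegree} implies that $\delta_{ftot}$ is non-decreasing with respect to $\gr_{r,t}$'' and then asserts the formal consequences, whereas you carry out the sign check on each homogeneous summand $d_{i,j}$ and $h_{i,j}$ (the same arithmetic the paper later tabulates in the proof of Proposition~\ref{prop:cob-filt-degrees}). Your identification of the two places where the inequality could fail --- the $\gr_k$-shift of $+2$, saved by $d_{1,2}=0$, and the $\gr_k$-shift of $j\le i+1$, saved by $i+1\le 2i$ for $i\ge 1$ --- is exactly the content being deferred to Lemma~\ref{lem:tridegree}.
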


Therefore, $\CC$ admits a \emph{filtration grading} given by 
\[\grrt(x) = \max \{i \in I \mid x \in \mathcal{F}_i\}.\]
The homology $H_*(\CC)$ inherits a grading from $\CC$ as follows.
\begin{definition} Given $0 \neq [x] \in H_*(\CC)$, we define the $\grrt$ grading of $[x]$ to be
\[ \grrt([x]) = \max_{y \in \CC} \{ \grrt(y) \mid [y] = [x] \}. \]
\end{definition}

Let $L \subset (A \times I)$ be an $l$-component annular link. By Proposition 5.4 of \cite{SSS}, $H_*(\CC_\ftot(L))$ has rank $2^l$ and there are canonical generators $g(o)$ in canonical correspondence with orientations $o$ on $L$, just as in the the Bar-Natan \cite{turner-calculating} and Lee \cite{Lee-perturbation} theories. 
Given a link diagram $\D$ with an orientation, the canonical generator $g(o) \in \CC_\ftot$ is given by the tensor product of labels $x_i$ or $1 + x_i$ on the circles in the oriented resolution, assigned by the following rule. Fix the white-and-black checkerboard colroing of the oriented resolution where the unbounded region is white, and label circle $i$ with $x_i$ if it bounds a black region, or $1+x_i$ if it bounds a white region.

\begin{definition}
Let $r \in [0,1]$ and $t \in [0,1]$. 
Define $s_{r,t}(L,o) = \gr_{r,t}([g(o)])$.  
\end{definition}

We will show in part \ref{thm:annlinkinvt} of Theorem \ref{thm:main-thm} that the $\srt(L,o)$ invariants are indeed oriented annular link invariants. 
\begin{example}
We compute $\srt(\widehat{\mathbb{1}}_n,o)$ for the closure of the identity $n$-braid $\widehat{\mathbb{1}}_n$ with a choice of orientation $o$. Using the crossingless diagram for $\idclosure_n$, we observe that $s_{r,t}(\idclosure_n, o)  
= \gr_{r,t}(v_- \otimes v_- \otimes \cdots \otimes v_-)
= (1-r)(-n + tn) = -n(1-r)(1-t).$
In particular, $\srt(\idclosure_n, o)$ is independent of the choice of orientation $o$ of $\idclosure_n$.
\end{example}

\begin{example}
Let $\idclosure^+_1 \in \mathfrak{B}_{2}$ denote the closure of the positive Markov stabilization of the identity $1$-braid. Given any choice of orientation $o$ on $\idclosure^+_1$,
\[\srt(\idclosure^+_1, o) = (1-r)(-1+2t)\]
Let $\idclosure^-_1 \in \mathfrak{B}_{2}$ denote the closure of the negative Markov stabilization of the identity $1$-braid. Given any choice of orientation $o$ on $\idclosure^-_1$,
\[\srt(\idclosure^-_1, o) = (1-r)(-1)\]
\end{example}

\begin{remark}[Comparison with the \SSS\ concordance invariants]
\label{rmk:relate-sss-invts}
We regard the real-valued $s_{r,0}(K,o)$ invariant (where $t =0$) as a continuous sibling of the \SSS\ $2\Z$-valued $s_o^{\mathcal{U}_{(r)}}(K)$ concordance invariant as follows. For $r \in [0,1]$, \SSS\ define the \emph{upright set} 
\[
\mathcal{U}_{(r)} = \{ (h,q) \in \Z \times (2\Z \times 1) \st r \cdot h + (1-r) \cdot q > 0 \text{, or } [r \cdot h + (1-r) \cdot q = 0 \text{ and } q \geq 0 ]\}.
\]
\SSS\ introduce notation for shifts $\mathcal{U}[n]$ of an upright set $\mathcal{U}$ in the $(\gr_h, \gr_q)$-plane:
\[
(h,q) \in \mathcal{U}[n] \iff (h, q-n) \in \mathcal{U} 
\]
for any upright set $\mathcal{U}$. \SSS\ only allow shifts by even integers $n \in \Z$, but for purposes of comparison with our invariants, we extend the definition to allow for shifts by any real number $n \in \R$. \SSS\ define
\[
s_o^{\mathcal{U}} (K, o) = \max \{ n \in 2\Z \st \CF_{\mathcal{U}[n]} \CC_{\ftot}(K) \text{ contains a rep.\ of } [g(o)] \} + 2
\]
where $[g(o)]$ denotes the homology class of the generator $g(o)$.

In our case, the filtration level $\CF_n^{r,0}$ of $\CC_\ftot$ defined by the grading $\gr_{r,0}$ is the half-plane
\[
\CF_n^{r,0} = \{ (h,q) \in \R \times \R \st r \cdot h + (1-r) \cdot q \geq n\},
\]
where we abuse notation by conflating a filtration level of $\CC$ with the half plane in $(\gr_h, \gr_q)$-space it is defined to be supported on. We see that
\[
\CF_n^{r,0} \cap (\Z \times (2\Z+1)) 
= \mathcal{U}_{(r)} \left [\frac{n}{1-r} \right] \cup \{ (h, q) \mid  r\cdot h + (1-r)\cdot q = n \text{ and } (1-r)q < n\}.
\]
That is, our discretized filtration levels $\CF_n^{r,0} \cap (\Z \times (2\Z+1))$ agree with the \SSS\ filtration levels $\mathcal{U}_{(r)}[\frac{n}{1-r}]$, with a different boundary condition.

\begin{remark}
\label{rmk:non-annular-comparison}
If the annular link $L$ is unlinked from the unknotted axis $U$, then $\srt(L) = s_{r,0}(L)$ for all $t \in [0,1]$.
Furthermore, if $[L]$ denotes the isotopy class of $L$ in $S^3$, then $s_{r,0}(L, o)$ ``agrees" with $s^{\mathcal{U}_(r)}([L], o)$, by the relationship described in Remark \ref{rmk:relate-sss-invts}.
\begin{proof}
Observe that all distinguished generators have $\gr_k = 0$, in which case $\grrt = r \cdot \gr_h + (1-r) \gr_q$.
\end{proof}
\end{remark}

\end{remark}

\subsection{Behavior under cobordisms}

Let us now review the definitions of the maps on $\CC_\ftot$ associated to the elementary movie moves, and describe their $(\gr_h, \gr_q, \gr_k)$ tridegree in an annular context. For the non-annular case, just consider the $(\gr_h, \gr_q)$ bidegree. Actual computations are performed in Section \ref{subsec:annular-filtration}, specifically in Lemma \ref{lem:tridegree}. 

In all of these cases, we consider a diagrammatic cobordism $F: \D \to \D'$. The cobordism map associated to the elementary move is a map $f: \CC_\ftot(\D) \to \CC_\ftot(\D')$.

First of all, for annular isotopies and annular Reidemeister moves, the associated map is the canonical isomorphism of the target and source complexes. Thus the interesting moves are the elementary cobordisms. 

To a birth, \SSS\ associate the map $a \mapsto a \otimes x_+$. Dually, to a death, \SSS\ associate the map $a \otimes x_- \mapsto a, a \otimes x_+ \mapsto 0$. In the annular case, let $x_\pm = v_\pm$ or $w_\pm$ depending on whether the circle is nontrivial or trivial, respectively. 

To an annular saddle, we get the associated map from looking at the differential in the complex corresponding the link diagram $\D''$ with a crossing at the site of the saddle cobordism, so that $\CC_\ftot(\D'') = \mathrm{cone}(\CC_\ftot(\D) \to \CC_\ftot(\D')[1]\{1\})$ (see Remark \ref{rmk:confusing-shifts}). So the cobordism map $f$ looks like the sum of the components of the differential (the $d_i$ and $h_i$), except that the homological and quantum degrees are one less than that of the true components of the differential. 

\begin{remark}
\label{rmk:confusing-shifts}
The chain map associated to a saddle cobordism is defined as a map $\CC_\ftot(\D) \to \CC_\ftot(\D')$. Following \cite{BN-Kh-intro}, we use the convention that $\CC[a]\{b\}^{h,q} = \CC^{h-a, q-b}.$ We can visualize degree shift as follows: $\CC[a]\{b\}$ is the result of grabbing $\CC$ and moving it $a$ units along the $\gr_h$-axis and $b$ units along the $\gr_q$-axis. 
\end{remark}

We may now describe the filtration degrees of the elementary cobordism maps.

\begin{proposition}[cf.\ \cite{GLW-dt} Prop.\ 2]
\label{prop:cob-filt-degrees}
For $r,t \in [0,1]$, the $\gr_{r,t}$ filtration degrees of \SSS\ chain map associated to 
\begin{enumerate}
    \item an annular elementary saddle cobordism is $(1-r)(-1)$,
    \item an annular birth / death is $1-r$,
    \item a non-annular birth / death is $(1-r)(1-t)$, and 
    \item an annular Reidemeister move is $0$.
\end{enumerate}
\begin{proof}
First note that the homological filtration degree shift for all of these elementary cobordism maps is 0.  
In the following, ``bifiltration'' means $(\gr_q, \gr_k)$ bifiltration. 
Below, we compute the lower bound on the degree shift under elementary cobordism maps on $\gr_q - t \gr_k$, so that the lower bound on the degree shift for $\gr_{r,t}$ is $(1-r)$ times that of $\gr_q - t \gr_k$.

\begin{enumerate}

\item Recall that $d_\ftot = \sum_{i \geq 1} d_i + h_i$, where 
\begin{enumerate}
    \item $d_i = d_{i,-2} + d_{i,0} + d_{i, 2}$, with  $d_{1,2} = 0$
    \item $h_i = h_{i, 0} + h_{i, 2} + h_{i, 4} + \cdots + h_{i, i+1}$ 
\end{enumerate}
\begin{table}[h!]
\centering
 \begin{tabular}{||c |  c | c c  c|| c||} 
 \hline
 &  \ & $\gr_h$ &  $\gr_q$ &  $\gr_k$ &  $ \gr_q - t \gr_k$  \\ [0.5ex] 
 \hline\hline
$i \geq 1$  &   $d_{i, -2}$ & $i$ & $2i-2$ &  $-2$ & $2i-2 + 2t$ \\ 
$i \geq 1$  &  $d_{i, 0}$ & $i$ &  $2i-2$ &  $0$ & $2i-2 $ \\
$i >1$   & $d_{i, 2}$ & $i$ &  $2i-2$ &  $2$ &$2i-2 - 2t$  \\
$i \geq 1, \ j \in [0, i+1]$  & $h_{i, j}$ &       $i$  &  $2i$    & $j$ & $2i-jt$ \\
 \hline
 \end{tabular}
\end{table}
Therefore, the lower bound for the $\gr_q - t \gr_k$ shift is $0$. 

Note that the filtrations degrees in the chart above take into account the quantum grading shift of +1 when we move along an edge, within a chain complex.
The map associated to a saddle cobordism corresponds to such an edge map, but without this shift in quantum grading, so the lower bound for the $\gr_q - t \gr_k$ shift is $-1$ for a saddle cobordism.

\item Let $\mathbb{W}$ denote the 2-dimensional vector space over $\F_2$ generated by two distinguished generators, at $(\gr_q, \gr_k)$ degrees $(1,0)$ and $(-1,0)$. This is the vector space underlying the chain complex for a single annular circle.

The birth of an annular circle corresponds to the inclusion map $ \CC \to \CC \otimes \mathbb{W}$ which takes $x \mapsto x \otimes w_+$, and hence has bifiltration degree $(1,0)$. 

The death of an annular circle corresponds to the projection map $\CC \otimes \mathbb{W}$ which takes $x \otimes w_+ \mapsto 0$ and $x \otimes w_- \mapsto x$, and hence also have bifiltration degree $(1,0)$ as well. (Note that one can think of  $\gr_q(0 \in \F_2[x]/(x^2)) = \infty$.) 

Therefore, the lower bound for the $\gr_q - t \gr_k$ shift is $1$. 

\item  Similarly, let $\mathbb{V}$ be the 2-dimensional vector space underlying the chain complex for a single non-annular circle. This has two distinguished generators, at bifiltration degrees  $(1,1)$ and $(-1,-1)$. 

The nonannular birth map  $ \CC \to \CC \otimes \mathbb{V}$ sends $x \mapsto x \otimes v_+$ and hence has bifiltration degree $(1,1)$.

The nonannular death map  $ \CC \otimes \mathbb{V} \to \CC$ sends $x \otimes v_+\mapsto 0$ and $x \otimes v_- \mapsto x$  and hence has bifiltration degree $(1,1)$ as well. 

Therefore, the lower bound for the $\gr_q - t \gr_k$ shift is $1-t$. 

\item
As in \cite{GLW-dt} Proposition 2, the chain homotopies that give Reidemeister equivalences do not interact with the annular axis, so the $\gr_k$ filtration degree of the annular cobordism (``annular'' in the sense of a concordance) is 0.

The $\gr_q$ filtration degree is 0 by Section 4 of \cite{SSS}.
\end{enumerate}
\end{proof}
\end{proposition}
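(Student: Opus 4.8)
The plan is to reduce the entire statement to a single auxiliary grading, $\gr_q - t\,\gr_k$, and then treat the four move types separately. Since
$\gr_{r,t} = r\,\gr_h + (1-r)(\gr_q - t\,\gr_k)$
and every elementary cobordism map is filtered of degree $0$ with respect to $\gr_h$ — the birth and death maps manifestly preserve $\gr_h$, and the saddle map, being the off-diagonal component of the mapping-cone presentation $\CC_\ftot(\D'')=\cone(\CC_\ftot(\D)\to\CC_\ftot(\D')[1]\{1\})$, has minimal $\gr_h$-shift $0$ after the $[1]$-shift — it is enough to compute, for each map, the smallest amount by which it shifts $\gr_q - t\,\gr_k$, and then multiply by $1-r$. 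I would record this reduction first.

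For the saddle I would feed in Lemma~\ref{lem:tridegree}. Writing $\delta_\ftot = \sum_i(d_{i,-2}+d_{i,0}+d_{i,2}) + \sum_{i}\sum_{j} h_{i,j}$ and reading off the tridegrees recorded there, the $\gr_q - t\,\gr_k$ shift of each summand is $2i-2+2t$ for $d_{i,-2}$, $2i-2$ for $d_{i,0}$, $2i-2-2t$ for $d_{i,2}$ (which occurs only for $i>1$, since $d_{1,2}=0$), and $2i - jt$ for $h_{i,j}$ with $j$ an even integer in $[0,i+1]$. For $r,t\in[0,1]$ every one of these is $\geq 0$; the cases closest to failing are $d_{i,2}$, where $2i-2-2t\geq 2-2t\geq 0$, and $h_{i,i+1}$, where $2i-(i+1)t = i(2-t)-t\geq 2-2t\geq 0$. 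Moreover the value $0$ is realized by $d_{1,0}$, the Khovanov differential. The saddle map is precisely this family of maps with both the homological and quantum degrees decreased by $1$, so its $\gr_q - t\,\gr_k$ filtration degree is $0-1=-1$ (again realized, by the Khovanov saddle component), and hence its $\gr_{r,t}$ degree is $(1-r)(-1)$.

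The birth and death maps I would handle by a direct computation on the two-dimensional vector space carried by the circle being created or destroyed. An annular (trivial-circle) birth sends $x\mapsto x\otimes w_+$, where the two labels of a trivial circle have $(\gr_q,\gr_k)$-bidegrees $(1,0)$ and $(-1,0)$; so this map has $\gr_q - t\,\gr_k$ degree $1$, and the dual death (whose only nonzero component is $x\otimes w_-\mapsto x$) does as well, giving $\gr_{r,t}$ degree $1-r$. A non-annular (nontrivial-circle) birth sends $x\mapsto x\otimes v_+$, where the labels of a nontrivial circle sit in $(\gr_q,\gr_k)$-bidegrees $(1,1)$ and $(-1,-1)$; so this map, and the surviving part $x\otimes v_-\mapsto x$ of the corresponding death, shift $\gr_q - t\,\gr_k$ by $1-t$, giving $\gr_{r,t}$ degree $(1-r)(1-t)$. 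Finally, for an annular Reidemeister move the associated map is a chain-homotopy equivalence assembled from local data supported away from the annular axis, so it is $\gr_k$-homogeneous of degree $0$ (exactly as in the Khovanov--Lee argument of \cite[Prop.~2]{GLW-dt}), $\gr_q$-homogeneous of degree $0$ by \cite[\S4]{SSS}, and trivially of $\gr_h$-degree $0$; thus its $\gr_{r,t}$ degree is $0$.

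I expect the only real difficulty to be the saddle case, for two bookkeeping reasons. First, one must verify that over \emph{all} summands $d_{i,j}$ and $h_{i,j}$ the minimum $\gr_q - t\,\gr_k$ shift is exactly $0$ for $r,t\in[0,1]$; this is where the full content of Lemma~\ref{lem:tridegree} — the range $j\in[0,i+1]$ for the $h_{i,j}$, and the vanishing $d_{1,2}=0$ (without which the shift would be $-2t<0$) — is genuinely needed. Second, one must be careful about how the saddle map inherits its grading shifts from the cube/cone description, so that the simultaneous ``$-1$'' in $\gr_h$ and $\gr_q$ is applied consistently with the $[1]\{1\}$ convention of Remark~\ref{rmk:confusing-shifts}. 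The birth, death, and Reidemeister cases are then routine once the bidegrees of the circle labels and the citations to \cite{GLW-dt} and \cite{SSS} are in place.
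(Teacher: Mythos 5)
Your proposal is correct and follows essentially the same route as the paper's own proof: reduce to the $\gr_q - t\,\gr_k$ shift (multiplying by $1-r$ at the end), read the saddle case off the tridegree table of Lemma \ref{lem:tridegree} with the extra $-1$ quantum shift from the cone convention, compute births/deaths directly on the bidegrees of $w_\pm$ and $v_\pm$, and cite \cite{GLW-dt} and \cite{SSS} for the Reidemeister case. The only difference is that you spell out the minimization over the $d_{i,j}$ and $h_{i,j}$ summands more explicitly, which the paper leaves implicit.
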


For reference, here are the tridegrees associated to elementary \emph{cobordism maps}:
\begin{corollary}
Consider the components of the saddle \emph{cobordism map} corresponding to the $d_i$ and $h_i$ maps which define the differential. 
For $d_i$, the $\gr_h$-degree is $i-1$, the $\gr_q$-degree is $2i-3$, and the $\gr_k$-degree is $-2, 0, $ or $2$.
For $h_i$, the $\gr_h$-degree is $i-1$, the $\gr_q$ degree is $2i-1$, and the $\gr_k$ degree is an integer in $[0, i+1]$.
The $(\gr_h, \gr_q, \gr_k)$ triple degree of the \emph{cobordism map} is $(0,1,0)$ for annular births and deaths, and $(0,1,1)$ for non-annular births and deaths.
\end{corollary}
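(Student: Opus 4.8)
The final statement is a corollary collecting the tridegrees of the \emph{cobordism maps} (as opposed to differentials). The plan is essentially bookkeeping: translate the tridegrees of the differential components $d_i, h_i$ (already established in Lemma \ref{lem:tridegree}) into tridegrees of cobordism maps by accounting for the fixed shifts built into the relationship $\CC_\ftot(\D'') = \cone(\CC_\ftot(\D) \to \CC_\ftot(\D')[1]\{1\})$ described in the discussion preceding Remark \ref{rmk:confusing-shifts}.

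\begin{proof}
First consider an annular saddle. By the discussion preceding Remark \ref{rmk:confusing-shifts}, if $\D''$ is the diagram with a crossing at the saddle site, then $\CC_\ftot(\D'') = \cone(\CC_\ftot(\D) \xrightarrow{f} \CC_\ftot(\D')[1]\{1\})$, so the saddle cobordism map $f$ is the sum of the edge-map components $d_i, h_i$ of $\delta_\ftot$, but with source and target related by a $[1]\{1\}$ shift. Using the convention of Remark \ref{rmk:confusing-shifts}, the component of $\delta_\ftot$ that contributes to $f$ as a map $\CC_\ftot(\D)^{h,q} \to \CC_\ftot(\D')^{h',q'}$ must have $(h', q') = (h + (\gr_h\text{-shift}) - 1,\ q + (\gr_q\text{-shift}) - 1)$; that is, the $(\gr_h, \gr_q)$ degree of $f$ is one less in each coordinate than the corresponding degree of the differential component, while the $\gr_k$ degree is unchanged (the cone shift does not involve the annular grading). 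By Lemma \ref{lem:tridegree}, $d_i$ has $(\gr_h, \gr_q)$-degree $(i, 2i-2)$, so the corresponding component of $f$ has $(\gr_h, \gr_q)$-degree $(i-1, 2i-3)$; and $d_i$ shifts $\gr_k$ by $-2, 0$, or $2$ (with $d_{1,2}=0$), so the same holds for $f$. Likewise $h_i$ has $(\gr_h, \gr_q)$-degree $(i, 2i)$, giving $f$-component degree $(i-1, 2i-1)$, with $\gr_k$-shift an even integer in $[0, i+1]$ by Lemma \ref{lem:tridegree}.

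Next consider births and deaths. For a non-annular circle, the underlying vector space $\mathbb{V}$ of \SSS\ has distinguished generators $v_+$, $v_-$ in $(\gr_q, \gr_k)$-bidegrees $(1,1)$ and $(-1,-1)$, and the birth map $x \mapsto x \otimes v_+$ and death map $x \otimes v_- \mapsto x$, $x \otimes v_+ \mapsto 0$ each raise the $(\gr_q, \gr_k)$-bidegree by $(1,1)$, as computed in the proof of Proposition \ref{prop:cob-filt-degrees}; the $\gr_h$-degree is $0$ since no crossings are created or destroyed. Hence the $(\gr_h, \gr_q, \gr_k)$ triple degree is $(0,1,1)$. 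For an annular circle, the underlying vector space $\mathbb{W}$ has generators $w_+$, $w_-$ in $(\gr_q, \gr_k)$-bidegrees $(1,0)$ and $(-1,0)$, so by the same argument the annular birth map $x \mapsto x \otimes w_+$ and annular death map have triple degree $(0,1,0)$.

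The only subtlety is the sign/shift convention for the saddle map: one must be careful that the $\{1\}$ in $[1]\{1\}$ lowers the cobordism-map quantum degree by exactly $1$ relative to the differential, rather than raising it, and that the homological shift $[1]$ similarly accounts for the extra $-1$ in the $\gr_h$-degree; this is precisely the point emphasized in the paragraph before Remark \ref{rmk:confusing-shifts} (``the homological and quantum degrees are one less than that of the true components of the differential''), and with that convention fixed, all the asserted degrees follow directly from Lemma \ref{lem:tridegree} and the proof of Proposition \ref{prop:cob-filt-degrees}.
\end{proof}
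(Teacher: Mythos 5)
Your proof is correct and follows essentially the same route the paper has in mind; the corollary is stated ``for reference'' without its own proof, and the content is implicit in the paragraph preceding Remark \ref{rmk:confusing-shifts} (``the homological and quantum degrees are one less than that of the true components of the differential'') together with the computations in Lemma \ref{lem:tridegree} and the proof of Proposition \ref{prop:cob-filt-degrees}. You have simply made the $[1]\{1\}$ cone-shift bookkeeping explicit, and your birth/death computations reproduce those already carried out in the proof of Proposition \ref{prop:cob-filt-degrees}.
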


We now state the main properties of the two-dimensional family of annular link invariants $\srt(L,o)$. These features can be compared with corresponding properties of Grigsby-Licata-Wehrli's annular Rasmussen invariants $d_t$, cf. \cite[Theorem 1]{GLW-dt}.

\begin{definition}
The \emph{wrapping number} $\omega(L)$ of an annular link $L$ is the minimal number of (transverse) intersections between the image of $L$ in a diagram $\D(L)$ and an arc connecting $\BBX$ and $\BBO$, over all possible diagrams of $L$.
\end{definition}

\begin{theorem}
\label{thm:main-thm}
Let $(L, o)$ be an oriented annular link. Let $r \in [0,1]$, $t \in [0,1]$.
\begin{enumerate}
    \item \label{thm:annlinkinvt} 
    For each pair $r,t \in [0,1]$, $s_{r,t}(L,o)$ is an oriented annular link invariant.
    \item 
    \label{thm:s00} Suppose $L$ is a knot. Then $s_{0,0}(L,o) =  s_{\F_2}(L,o)-1$, where $s_{\F_2}$ is Rasmussen's concordance invariant over $\F_2$ coefficients \cite{turner-calculating, mackaay-turner-vaz}.
    \item 
    \label{thm:piecewiselinear}
    For fixed $r$ (respectively  $t$), the function $s_{r,t}(L,o)$ is piecewise-linear with respect to the variable $t$ (respectively $r$). 
    \item 
    \label{thm:slopes}
    Let $\omega$ be the wrapping number of $L$. Fix $r \in [0,1)$. Then for all $t_0 \in [0,1)$
    \[
        \left ( -\frac{1}{1-r} \right )
        \lim_{t \to t_0+}
        \frac{
            \srt(L,o) 
            - s_{r,t_0}(L,o)
        }{t - t_0}
        \in \{ -\omega, -\omega + 2, \ldots, \omega-2, \omega\}.
    \]
    \item 
    \label{thm:cobordism}
    Let $F: (L, o) \to (L', o')$ be an oriented cobordism between two nonempty, oriented links such that each component of $F$ has a boundary component in $L$. Let $a_0$ be the number of annular births or deaths, $a_1$ the number of saddles, and $b_0$ the number of non-annular births or deaths. Then
    \[
        s_{r,t}(L,o) - s_{r,t} (L', o') \leq (r-1)(a_0 - a_1 + b_0(1-t)).
    \]
    If furthermore each component in $F$ has a boundary component in $L'$ as well, then 
    \[
        \vert s_{r,t}(L,o) - s_{r,t} (L', o') \vert \leq (r-1)(a_0 - a_1 + b_0(1-t)).
    \]
    \label{cobordismproperty}
    \item $\srt$ is an annular concordance invariant.
\end{enumerate}
\begin{proof}
    \begin{enumerate}
        \item Let $\D$ and $\D'$ be two annular link diagrams related by an annular Reidemeister move: that is, an isotopy or Reidemeister move that never crosses either of the marked points $\BBX$ or $\BBO$. Forgetting the location of the basepoints, \SSS\ show in Section 4 of \cite{SSS} that $\CC_\tot(\D) \simeq \CC_\tot(\D')$, i.e.\ are equal in the homotopy category of (bigraded) chain complexes $\CK(\F_2[H,W])$ over $\F_2[H,W]$. Thus they are also equal after applying the functor $\CK(\F_2[H,W]) \to \catFilt$ which sets $H = 1$ and $W= 1$. Hence the $\gr_h$ and $\gr_q$ degrees of the chain homotopy equivalence between $\Cftot(\D)$ and $\Cftot(\D')$ are both 0. 
        
        As Reidemeister moves are local, we can ensure that the small neighborhood where $\D$ differs from $\D'$ is disjoint from the basepoints.
        
        It remains to check that the chain homotopy equivalence between the images $\CC_\ftot(\D)$ and $\CC_\ftot(\D')$ in $\catFilt$ (where $H,W = 1$ throughout) has $\gr_{0,1} = \gr_q - \gr_k$ filtered degree 0. 
        
        The chain homotopy equivalences are defined using \emph{cancellation} (see \cite{SSS} Lemma 4.5).
        Cancellation of a \emph{cancellation data} is a filtered chain homotopy equivalence if the component of the differential in the cancellation data is minimal in $\gr_{0,1}$ degree. 
        
        For Reidemeister I and II, it is easy to see that each cancellation step involves cancelling a differential between resolutions where all circles involved keep their trivial or nontrivial status, or are new trivial circles. 
        Reidemeister III invariance is proven by reducing each complex to a smaller complex, and then noting that the smaller complexes are equivalent, by isotopy. So, it suffices to check that either of these reductions constitutes a filtered chain homotopy equivalence. Again, the cancellation data involve differentials that maintain the annular status of all circles. 
        
        Since the $\gr_q$ degree of these cancellation data is known to be 0, and by Lemma \ref{lem:tridegree}, all components of the differential have $\gr_{0,1}$ degree at least 0. Thus the components of the differential chosen for cancellation are indeed components with minimal $\gr_{0,1}$ degree.

        \item One can prove this using Lemma 5.5 in \cite{SSS}, but perhaps the quickest proof relies on \SSS's invariants. When $r = 0$, $\mathcal{U}_{(0)}[n]$ and $\CF^{0,0}_n$ agree on the lattice $\Z \times (2 \Z + 1)$. Note that the quantum grading of any generator in the Khovanov complex for a knot is an odd integer. Now \SSS's $s^{\mathcal{U}_{(0)}}_o(K)$, which they show is equal to $s_{\F_2}$, is the first \emph{even} integer for which the filtration level $\CF_{\mathcal{U}_{(0)}}[n]$ \emph{does not} contain a representative of $[g(o)]$. Hence $s_{0,0}(K,o) = \gr_q[g(o)]$ must be the odd integer $s^{\mathcal{U}_{(0)}}_o(K)-1 = s_{\F_2}-1$. 
        \item For fixed $r$ (resp.\ $t$), the function $\srt(K,o)$ is piecewise because the complex $\CC_\ftot$ is finitely generated. Along intervals on which the same representative cycle $x \in [g(o)]$ achieves the maximum grading $\grrt(x) = \grrt[g(o)]$, the values $\gr_h(x), \gr_q(x), \gr_k(x)$, and $r$ (resp.\ $t$) are constant. 
        \item Suppose $[y]$ is a nontrivial homology class. Then $\grrt[y]$ is achieved by some representative cycle $y \in [y]$, i.e.\ $\grrt(y) = \grrt [y]$. Write $y = \sum_i y_i$ where each $y_i$ is a distinguished generator. Then $\grrt(y) = \min_i \{ \grrt(y_i)\}$. So, it suffices to prove the statement for distinguished generators. 
        
        Let $x$ be a distinguished generator. Then 
            \[
                \grrt(x) = r \cdot \gr_h(x) + (1-r) \cdot (\gr_q(x) - t \cdot \gr_k(x))
            \]
        so
            \[
                \lim_{t \to t_0^+}  \frac{\grrt(x) - \gr_{r, t_0}(x)}{t-t_0} = (1-r) \cdot (-\gr_k(x)).
            \]
        Now the state of $x$ has at most $\omega$ $v$-circles, each of which contributes $\pm 1$ (for $v_\pm$, respectively) to $\gr_k(x)$, so $\gr_k(x) \in \{ -\omega, -\omega+2, \ldots, \omega-2, \omega\}$. 
    
    \item Let $\phi_F$ denote the chain map associated to the cobordism $F$, and let $\phi_F^*$ be the induced map on homology. 
    Let $x \in [g(0)]$ such that $\grrt(x) = \grrt[g(0)] = s_{r,t}(L, o)$. 
    By Proposition \ref{prop:cob-filt-degrees}, 
    \[
        \grrt(\phi_F(x)) \geq s_{r,t}(L,o) + (1-r)(a_0 - a_1 + b_0(1-t)).
    \]
    By asserting that every component of $F$ has boundary on $(L, o)$, we assure that $\phi_F^*[g(o)]$ is (a nonzero multiple of) $[g(o')]$. (See BN-2 of Proposition 5.3 in \cite{SSS} and references therein.)
    Hence $\phi_F(x) \in [g(o')]$, so $\grrt(\phi_F(x)) \leq \grrt [g(o')] = \srt(L', o')$. 
    Putting these two inequalities together, we obtain 
    \[ 
        \srt(L', o') \geq s_{r,t}(L,o) + (1-r)(a_0 - a_1 + b_0(1-t))
    \]
    and rearrange to obtain the desired inequality.
    To obtain the statement with absolute value, consider the opposite cobordism $-F: (L', o') \to (L, o)$ as well. 
    \item Now consider the case where there is an annular concordance $F$ between $(L,o)$ and $(L', o')$. In particular, $(L,o)$ and $(L', o')$ each have $l$ components, $F$ a disjoint union of $l$ annuli, each with one boundary component in $L$ and the other in $L'$. Recalling the notation from Section \ref{subsec:annular-cobs}, since $F$ is an \emph{annular} concordance, $F$ is disjoint from $U \times I \subset S^3 \times I$. Hence $b_0 = 0$, and $a_0 = a_1$, so we have $|\srt(L, o) - \srt(L', o')| \leq 0$.
    \end{enumerate}
\end{proof}
\end{theorem}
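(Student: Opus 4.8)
The plan is to prove the six parts of Theorem~\ref{thm:main-thm} one at a time, in each case reducing the statement to the tridegree bookkeeping of Lemma~\ref{lem:tridegree} and the cobordism-degree computations of Proposition~\ref{prop:cob-filt-degrees}; the genuinely delicate point is part~(1), and the rest is bookkeeping.

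For part~(1) I would start from \SSS's proof of Reidemeister invariance: over $\F_2[H,W]$ the total complex $\CC_\tot(\D)$ is well defined up to bigraded chain homotopy equivalence, so specializing $H = W = 1$ shows that annularly equivalent diagrams $\D$ and $\D'$ yield complexes $\CC_\ftot(\D)$, $\CC_\ftot(\D')$ related by a chain homotopy equivalence of $\gr_h$- and $\gr_q$-filtration degree $0$. The new content is the annular direction: I must check this equivalence also has $\gr_q - \gr_k$ filtration degree $0$. Since the \SSS\ equivalences are assembled from iterated Gaussian eliminations (``cancellations''), and a cancellation is a filtered homotopy equivalence exactly when the cancelled component of the differential has minimal filtration degree among the arrows out of the relevant generator, it suffices to inspect the cancellation data appearing in the Reidemeister~I, II and (after reducing both sides to a common smaller model) III arguments. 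Lemma~\ref{lem:tridegree} already guarantees that every component of $\delta_\ftot$ has $\gr_q - \gr_k$ degree $\ge 0$, so I only need to observe that the cancelled arrows are Khovanov-type edge maps between resolutions in which every circle keeps its trivial/nontrivial status (or is a newly born trivial circle), hence have $\gr_q - \gr_k$ degree exactly $0$ and are therefore minimal. This makes the equivalence $\gr_{r,t}$-filtered of degree $0$ for all $r,t$, and invariance of $\srt(L,o) = \gr_{r,t}([g(o)])$ follows.

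Parts~(2)--(4) are shorter. For~(2): at $r = t = 0$ we have $\gr_{0,0} = \gr_q$, so $s_{0,0}(L,o)$ is the largest quantum grading of a cycle representing $[g(o)]$, necessarily an odd integer for a knot; since at $r = 0$ the filtration levels $\CF^{0,0}_n$ agree with the translated \SSS\ upright sets $\mathcal{U}_{(0)}[n]$ on the odd lattice $\Z \times (2\Z+1)$, the first even integer $n$ for which $\CF_{\mathcal{U}_{(0)}[n]}$ fails to carry a representative of $[g(o)]$ — which by the \SSS\ definition is $s^{\mathcal{U}_{(0)}}_o(K) = s_{\F_2}(K)$ — equals $\gr_q[g(o)] + 1$, giving $s_{0,0}(L,o) = s_{\F_2}(L,o) - 1$. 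Part~(3) is immediate from finite generation of $\CC_\ftot$: for fixed $r$, $\srt([g(o)])$ is a finite extremum of the affine functions $t \mapsto \gr_{r,t}(x)$ over distinguished generators $x$, hence continuous and piecewise linear in $t$, and symmetrically in $r$. For part~(4) I differentiate along a linearity interval: if $x$ is a distinguished generator realizing the value on a right-neighborhood of $t_0$, then $\lim_{t\to t_0^+}(\gr_{r,t}(x) - \gr_{r,t_0}(x))/(t-t_0) = -(1-r)\gr_k(x)$, and computing in a diagram realizing the wrapping number, the state underlying $x$ has at most $\omega$ nontrivial circles, each contributing $\pm 1$ to $\gr_k$, so $\gr_k(x) \in \{-\omega, -\omega+2, \dots, \omega\}$.

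For part~(5) I would use the chain map $\phi_F \colon \CC_\ftot(\D) \to \CC_\ftot(\D')$ of a movie presentation of $F$. Decomposing $F$ into annular Reidemeister moves, $a_0$ annular births/deaths, $a_1$ saddles and $b_0$ non-annular births/deaths, Proposition~\ref{prop:cob-filt-degrees} together with additivity of filtration degree under composition shows $\phi_F$ is $\gr_{r,t}$-filtered of degree at least $(1-r)(a_0 - a_1 + b_0(1-t))$. The Bar--Natan structure of $H_*(\CC_\ftot)$ (property BN-2 of Proposition~5.3 of \cite{SSS}) together with the hypothesis that each component of $F$ meets $(L,o)$ forces $\phi_F^*[g(o)] = [g(o')]$; so for a cycle $x \in [g(o)]$ with $\gr_{r,t}(x) = \srt(L,o)$ we get $\srt(L',o') \ge \gr_{r,t}(\phi_F(x)) \ge \srt(L,o) + (1-r)(a_0 - a_1 + b_0(1-t))$, which rearranges to the stated inequality; applying this to the reversed cobordism $-F$ (which has the same $a_0, a_1, b_0$) yields the two-sided bound when every component of $F$ meets $L'$ as well. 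Part~(6) is then the special case in which $F$ is a disjoint union of annuli disjoint from $U \times I$: there $b_0 = 0$, and $\chi(F) = 0$ forces $a_0 = a_1$, so the part~(5) bound collapses to $|\srt(L,o) - \srt(L',o')| \le 0$.

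The one step I expect to require real care is part~(1): the other parts are essentially grading bookkeeping against Lemma~\ref{lem:tridegree} and Proposition~\ref{prop:cob-filt-degrees}, whereas establishing that the \SSS\ Reidemeister chain homotopy equivalences have annular ($\gr_q - \gr_k$) filtration degree $0$ means opening up their cancellation arguments and verifying, move by move, that the cancelled differential components never interact with the annular axis in a way that would destroy minimality — plausible, but not automatic.
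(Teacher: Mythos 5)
Your proposal tracks the paper's own argument essentially step by step: part~(1) via the \SSS\ cancellation data and the $\gr_q - \gr_k$ minimality check, part~(2) via the lattice identification of $\CF^{0,0}_n$ with $\mathcal{U}_{(0)}[n]$ and the odd/even parity bookkeeping, parts~(3)--(4) via finite generation and differentiation of the grading formula, part~(5) via Proposition~\ref{prop:cob-filt-degrees} together with the BN-2 property, and part~(6) as the degenerate case of~(5) with $b_0 = 0$ and $a_0 = a_1$. The only (welcome) extra touches are your explicit note that part~(4) should be computed in a diagram realizing the wrapping number and your explicit Euler-characteristic justification of $a_0 = a_1$ in part~(6), both of which are implicit in the paper.
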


\begin{remark}
The one-parameter family of invariants $s_{r,0}$ for $r \in [0,1]$ is related to the \SSS\ generalized Rasmussen invariants (see Remark \ref{rmk:relate-sss-invts}). The definition of the one-parameter family of $s_{0,t}$ invariants resembles Grigsby-Licata-Wehrli's annular $d_t$ invariants, which specialize to Rasmussen's $s$-invariant with $\mathbb{Q}$ coefficients (at $t=0$ and $ 2$). 
\end{remark}

\begin{remark}
\label{rem:seeds}
Cotton Seed \cite[Remark 6.1]{LS-refinement} has found examples where $s_{\F_2}(K) \neq s_{\mathbb{Q}}(K)$.  
\end{remark}

\subsection{Tangle closures}

Viewing annular links as tangle closures, we can relate the annular horizontal composition with the non-annular isotopy type of their (vertical) tangle composition by a sequence of 1-handle additions.

Before studying the behavior of $\srt$ under such an operation, we need a few topological definitions. See Figure \ref{fig:tangle-defns} for examples.

\begin{figure}
    \centering
    \labellist
    \pinlabel   $T_1$    at                     11.7    60
    \pinlabel   \rotatebox{180}{$T_1$} at       63      60
    \pinlabel   $\mathbb{X}$    at              154     60
    \pinlabel   $T_1$    at                     187     60
    \pinlabel   $T_1$    at                     243     60
    \pinlabel   $\mathbb{X}$    at              274     60
    \endlabellist
    \includegraphics{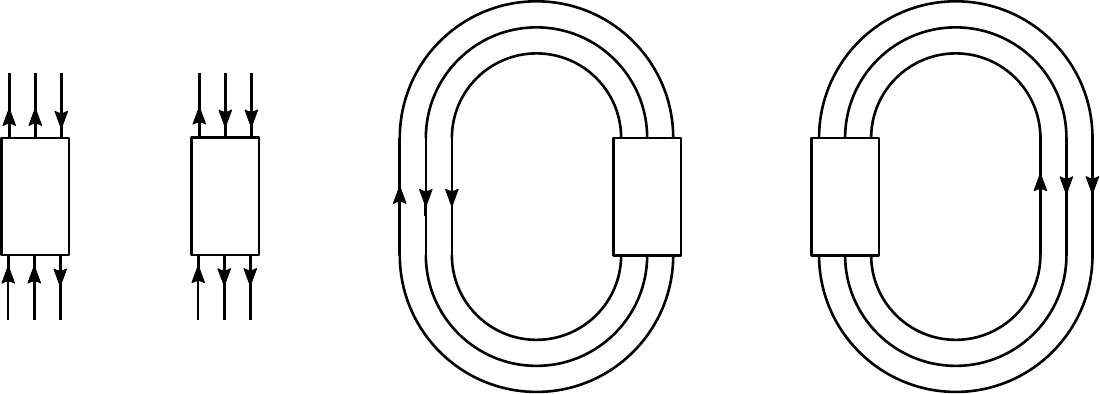}
    \caption{From left to right: the tangles $T_1$ and $\theta T_1$ in $\mathcal{T}^\wedge_{3,3}$ and their closures $\widehat{T_1}$ and $ \widehat{\theta T_1}$, respectively.}
    \label{fig:tangle-defns}
\end{figure}

\begin{figure}
    \centering
    \labellist
    \pinlabel	$\mathbb{X}$    at 	64   80
    \pinlabel   $\mathbb{X}$    at  160   80
    \pinlabel   \rotatebox{180}{$T_2$}  at  95 80  
    \pinlabel   $T_1$           at  120 80
    \pinlabel   \rotatebox{180}{$T_2$}  at  183 80  
    \pinlabel   $T_1$           at  248 80
    \endlabellist
    \includegraphics{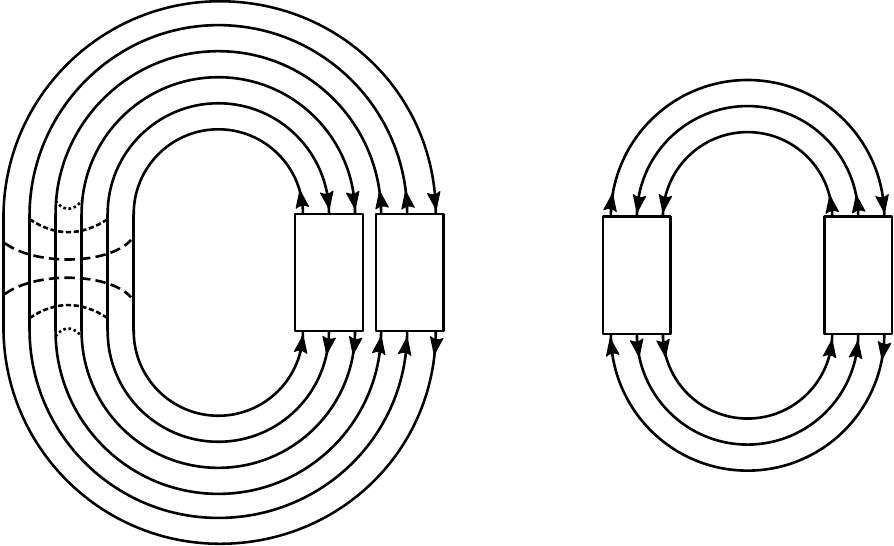}
    \caption{The dotted lines on the left indicate the three saddle cobordisms which transform $\widehat{\theta T_2} \sqcup \widehat{T_1}$ into $\widehat{T_1T_2}$, shown on the right.}
    \label{fig:tangle-compose}
\end{figure}

Let $\nntangles$ be the set of oriented $(n,n)$-tangles whose top and bottom strands agree (i.e.\ the tangle can be closed up). 
Composition of $T_1 \cdot T_2$ is given by stacking, if orientations on the ends permit this. Let $\widehat T$ denote the closure of $T$. If we view $\widehat{T} \subset A \times I$ as the \emph{annular closure} of $T$, there is an ambiguity: we set the convention that the basepoint $\BBX$ is located directly to the left of $T$, and that $\widehat T$ is then closed around $\BBX$, so that in the diagram of $\widehat T$, $T$ is situated on the right. 

Define an involution $\theta$ on annular links which swaps $\widehat T$ with the result of the other convention: diagrammatically, $\theta$ is equivalent to swapping the basepoints $\BBX$ and $\BBO$. 
For $T \in \nntangles$, define $\theta$ as the involution that rotates $T$ by $180^\circ$. Observe that $\theta \widehat{T} = \widehat{\theta T}$.

For an annular link $L$, let $[L]$ denote its (nonannular) isotopy class in $S^3$. Recall (from Remark \ref{rmk:non-annular-comparison}) that if an annular link $L$ is disjoint from the axis (diagrammatically, $L$ is situated in a disk disjoint from an arc connecting $\BBO$ and $\BBX$), then $s_{r,t}(L) = s_{r,0}(L)$ for all $t$. With this in mind, we write $s_r([L]) = s_{r,0}(L)$.

\begin{lemma}
\label{lem:tangle-first-bound}
Let $T_1, T_2 \in \nntangles$. Then 
\[
    \vert 
        s_r([\widehat{T_1 \cdot T_2}]) 
        - \srt(\widehat{T_1} \sqcup \widehat{\theta T_2})
    \vert 
    \leq 
    (1-r)n.
\]
\begin{proof}
By adding $n$ 1-handles to $\widehat{T_1} \sqcup \widehat{\theta T_2}$, we obtain the nonannular representative of $\widehat{T_1 \cdot T_2}$ (see Figure \ref{fig:tangle-compose}). Then inequality then follows from Theorem \ref{thm:main-thm} part \ref{cobordismproperty}. 
\end{proof}
\end{lemma}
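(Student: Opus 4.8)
The plan is to exhibit an explicit cobordism realizing the passage from the split annular link $\widehat{T_1} \sqcup \widehat{\theta T_2}$ to the nonannular link $\widehat{T_1 \cdot T_2}$, count its critical points, and then invoke the cobordism inequality of Theorem~\ref{thm:main-thm}\eqref{cobordismproperty}. First I would set up the picture carefully: draw $\widehat{T_1}$ with $T_1$ situated to the right of the basepoint $\BBX$ and $\widehat{\theta T_2}$ with $T_2$ to the left of $\BBX$ (this is the content of $\theta$ swapping the basepoints), placed so that the two closures are disjoint but adjacent, as in Figure~\ref{fig:tangle-compose}. Then attaching $n$ bands along the obvious $n$ parallel arcs connecting the $n$ right-hand strands of $\widehat{\theta T_2}$ to the $n$ left-hand strands of $\widehat{T_1}$ merges the two pictures and, after an isotopy that pulls $T_2$ around to sit on top of $T_1$, produces a diagram of the vertical composite $\widehat{T_1 \cdot T_2}$ with its closure arc; crucially this arc can be taken to lie in a disk, so the resulting link is \emph{split} from the annular axis and hence $\srt$ of it equals $s_r$ of its nonannular isotopy class $[\widehat{T_1 \cdot T_2}]$, by Remark~\ref{rmk:non-annular-comparison}.

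Next I would record the critical-point count of this cobordism $F$: it consists of exactly $n$ saddle moves (the $n$ band attachments) and no births or deaths, so in the notation of Theorem~\ref{thm:main-thm}\eqref{cobordismproperty} we have $a_1 = n$, $a_0 = 0$, $b_0 = 0$. Moreover each component of $F$ has a boundary component on each side — indeed the $n$ bands are chosen precisely so that every component of the source survives to the target and vice versa, since closing up the tangle composite identifies the strands consistently — so the two-sided (absolute value) version of the inequality applies. Plugging in gives
\[
    \bigl| \srt(\widehat{T_1} \sqcup \widehat{\theta T_2}) - \srt(\widehat{T_1 \cdot T_2}) \bigr| \leq (r-1)(0 - n + 0) = (1-r)n,
\]
and replacing $\srt(\widehat{T_1 \cdot T_2})$ by $s_r([\widehat{T_1 \cdot T_2}])$ via the split-ness observation yields the claimed bound.

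The step I expect to be the main obstacle is verifying the two topological claims that make the cobordism inequality applicable with the stated numbers: first, that one genuinely can realize $\widehat{T_1 \cdot T_2}$ from $\widehat{T_1} \sqcup \widehat{\theta T_2}$ using exactly $n$ saddles and no other critical points — this requires being careful about how the annular closure arcs of the two split pieces recombine into the single closure arc of the composite, and is where the role of $\theta$ (equivalently, the placement of $\BBX$) is essential; and second, that every component of the cobordism surface meets both boundary links, so that the canonical generator $[g(o)]$ maps to a nonzero multiple of $[g(o')]$ and the absolute-value inequality (not just the one-sided one) is valid. Once the movie of $F$ is drawn explicitly as in Figure~\ref{fig:tangle-compose} these are routine to check, so I do not anticipate any real difficulty beyond bookkeeping; the substance of the lemma is entirely in applying Theorem~\ref{thm:main-thm}\eqref{cobordismproperty} to the right cobordism.
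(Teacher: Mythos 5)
Your proof is correct and follows the same approach as the paper: exhibit the $n$-saddle cobordism from $\widehat{T_1} \sqcup \widehat{\theta T_2}$ to the nonannular (split) representative of $\widehat{T_1 \cdot T_2}$ shown in Figure~\ref{fig:tangle-compose}, check that $a_1 = n$, $a_0 = b_0 = 0$ and that both boundary conditions of Theorem~\ref{thm:main-thm}\eqref{cobordismproperty} hold, and conclude. You have simply spelled out the bookkeeping (including the appeal to Remark~\ref{rmk:non-annular-comparison} to convert $\srt$ of the split target into $s_r$ of its nonannular class) that the paper leaves implicit.
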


\begin{lemma}
\label{lem:tangle-equiv}
Let $T \in \nntangles$. Then for all $r, t \in [0,1]$, 
    $\srt(\widehat T) = \srt(\widehat{\theta T})$.
\begin{proof}
In the hypotheses of Lemma \ref{lem:tangle-first-bound}, let $T_1$ be the identity tangle $\mathbb{1}_n$ and let $T_2 = T$. Combining with Proposition \ref{prop:additive}, we obtain the inequality 
\[
    \vert 
        s_r([\widehat{T_2}]) 
        - \srt(\widehat{T_1}) - \srt(\widehat{\theta T_2})
    \vert 
    \leq 
    (1-r)n.
\]
Now obtain a similar inequality by replacing $T_2$ with $\theta T_2$ and observing that $\widehat{\theta \mathbb{1}_n} = \widehat{\mathbb{1}_n}$ and that $[\widehat{T_2}] = [\widehat{\theta T_2}]$:
\[
    \vert 
        s_r([\widehat{T_2}]) 
        - \srt(\widehat{T_1}) - \srt(\widehat{T_2})
    \vert 
    \leq 
    (1-r)n.
\]
Resolving the absolute values in two different ways and combining these two inequalities forces $\srt(\widehat{\theta T_2}) - \srt(\widehat{T_2}) = 0$.
\end{proof}
\end{lemma}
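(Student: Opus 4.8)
The plan is to push the strategy of Lemma~\ref{lem:tangle-first-bound} a little further: present both $\widehat{T}$ and $\widehat{\theta T}$ as tangle closures that differ only by composing with the identity tangle, bound the change of $\srt$ along the $1$-handle cobordisms that Lemma~\ref{lem:tangle-first-bound} supplies, and then play these bounds off against the two structural features of $\theta$ on $\nntangles$: that it is an involution, and that it does not change the $S^3$-isotopy class of a tangle closure. First I would apply Lemma~\ref{lem:tangle-first-bound} with $T_1=\mathbb{1}_n$ and $T_2=T$. Since $\mathbb{1}_n\cdot T=T$ and $\widehat{\theta\mathbb{1}_n}=\widehat{\mathbb{1}_n}=\idclosure_n$, and since $\srt$ is additive under disjoint unions of annular links (Proposition~\ref{prop:additive}), this produces
\[
\bigl|\, s_r([\widehat{T}]) - \srt(\idclosure_n) - \srt(\widehat{\theta T}) \,\bigr| \;\le\; (1-r)n .
\]

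Next I would rerun this step with $T$ replaced by $\theta T$. Because $\theta$ is an involution, $\widehat{\theta(\theta T)}=\widehat{T}$; and because a $180^\circ$ rotation of the diagram is an ambient isotopy once the axis is forgotten, $[\widehat{\theta T}]=[\widehat{T}]$ and hence $s_r([\widehat{\theta T}])=s_r([\widehat{T}])$. This gives
\[
\bigl|\, s_r([\widehat{T}]) - \srt(\idclosure_n) - \srt(\widehat{T}) \,\bigr| \;\le\; (1-r)n .
\]
Writing $P := s_r([\widehat{T}]) - \srt(\idclosure_n)$, the two displays say that $\srt(\widehat{T})$ and $\srt(\widehat{\theta T})$ both lie within $(1-r)n$ of $P$; resolving the absolute values against one another then forces $\srt(\widehat{T}) = \srt(\widehat{\theta T})$.

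The step I expect to be the main obstacle is this last combination: taken at face value the two bounds only give $|\srt(\widehat{T}) - \srt(\widehat{\theta T})| \le 2(1-r)n$, so one must either extract the additivity/cobordism estimate in a sharp (equality) form, or iterate the argument along longer alternating composites $T\cdot\theta T\cdot T\cdots$ --- legitimate because $[\widehat{AB}]=[\widehat{BA}]$ in $S^3$ --- to drive the constant to $0$. A secondary nuisance is keeping the $\BBX$/$\BBO$ conventions consistent while identifying $\widehat{\theta T}$ with $\theta\widehat{T}$ and applying disjoint-union additivity in the presence of the axis; as a sanity check, swapping $\BBX$ with $\BBO$ leaves the filtered complex $(\CC_\ftot,\delta_\ftot,\gr_{r,t})$ unchanged (the trivial versus nontrivial status of each circle is symmetric in the two basepoints), so the entire effect of $\theta$ is funneled into the checkerboard-coloring recipe defining the distinguished generator $g(o)$ --- which suggests an alternative, purely algebraic route through the canonical generators.
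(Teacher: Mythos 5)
Your reconstruction is the paper's proof almost word for word: apply Lemma~\ref{lem:tangle-first-bound} with $T_1=\mathbb{1}_n$ and $T_2=T$, then again with $T_2=\theta T$, use $\widehat{\theta\mathbb{1}_n}=\idclosure_n$, $[\widehat{\theta T}]=[\widehat{T}]$, and Proposition~\ref{prop:additive}, and arrive at two estimates of the shape $|P-\srt(\widehat{\theta T})|\le(1-r)n$ and $|P-\srt(\widehat{T})|\le(1-r)n$ with $P=s_r([\widehat{T}])-\srt(\idclosure_n)$.

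The weakness you flag at the end is genuine, and it is present in the paper's proof as well, not something you introduced. As written, the two displayed inequalities only say that $\srt(\widehat{T})$ and $\srt(\widehat{\theta T})$ both lie in the interval of radius $(1-r)n$ about $P$; that gives $|\srt(\widehat{T})-\srt(\widehat{\theta T})|\le 2(1-r)n$, not equality, and the phrase ``resolving the absolute values in two different ways'' does not supply the missing cancellation. Neither of the two in-framework rescues you float closes the gap as stated: the cobordism estimate of Theorem~\ref{thm:main-thm}(\ref{thm:cobordism}) is an inequality, not an equality, so there is no ``sharp form'' to extract; and the alternating-composite iteration $T\cdot\theta T\cdot T\cdots$ runs into the problem that it is not clear $T$ and $\theta T$ are vertically composable in $\nntangles$, nor that the error per step contracts.

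Your parenthetical at the very end is, in my view, the correct proof, and it is cleaner than the route the paper attempts. Swapping $\BBX$ and $\BBO$ leaves the underlying $\SSS$ chain complex and the gradings $\gr_h$, $\gr_q$ untouched, and $\gr_k$ is symmetric in the two basepoints (a circle separates $\BBX$ from $\BBO$ if and only if it separates $\BBO$ from $\BBX$), so $\CC_\ftot(\widehat{T})$ and $\CC_\ftot(\widehat{\theta T})$ are literally the same $(\Z\oplus\Z\oplus\Z)$-filtered complex. The only thing that can change is the canonical generator, through the ``unbounded region is white'' convention: depending on the parity of the number of circles of the oriented resolution separating $\BBX$ from $\BBO$, the checkerboard coloring either agrees or has all colors flipped, so $g(o)$ for $\widehat{\theta T}$ is identified with $g(o)$ or $g(-o)$ for $\widehat{T}$. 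Now invoke Proposition~\ref{prop:orientationreverse}, $\srt(L,o)=\srt(L,-o)$, to conclude $\srt(\widehat{T})=\srt(\widehat{\theta T})$ in either case. Although Proposition~\ref{prop:orientationreverse} appears later in the paper, its proof depends only on Proposition~\ref{prop:additive} and not on Lemma~\ref{lem:tangle-equiv}, so there is no circularity --- and indeed the proof of Proposition~\ref{prop:additive} already contains exactly this checkerboard-flip analysis in the claim $g(o\sqcup o')=g(o)\otimes g(-o')=g(-o)\otimes g(o')$. I would recommend writing up this symmetry argument in place of the absolute-value manipulation.
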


\begin{theorem}
Let $T_1, T_2 \in \nntangles$ be two composable tangles. Then for $r,t \in [0,1]$,
\[
    \vert
    s_r([\widehat{T_1 \cdot T_2}])
    - \srt(\widehat{T_1} \sqcup \widehat{T_2})
    \vert
    \leq (1-r)n.
\]
In particular, 
\[
    \vert
    s_r([\widehat{T_1 \cdot T_2}])
    - s_r([\widehat{T_1 \cdot \theta T_2}])
    \vert
    \leq 2(1-r)n.
\]
(Note that $[\widehat{T_1 \cdot \theta T_2}] = [\widehat{\theta T_1 \cdot T_2}]$, as $\theta$ acts as identity on links in $S^3$.)
\begin{proof}
The first inequality follows by applying Lemma \ref{lem:tangle-equiv} to the statement of Lemma \ref{lem:tangle-first-bound}. 
The second inequality follows from the first, since both $s_r(\widehat{T_1 \cdot T_2})$ and $s_r(\widehat{T_1 \cdot \theta T_2})$ are related to $\srt(\widehat{T_1} \sqcup \widehat{T_2})= \srt(\widehat{T_1}) + \srt(\widehat{T_2}) = \srt(\widehat{T_1}) + \srt(\widehat{\theta T_2})$. 
\end{proof}
\end{theorem}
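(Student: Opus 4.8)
The plan is to derive both displayed inequalities purely formally from Lemma~\ref{lem:tangle-first-bound}, Lemma~\ref{lem:tangle-equiv}, and the additivity of $\srt$ under disjoint unions (Proposition~\ref{prop:additive}); there is no new geometric input. The key identity is
\[
    \srt(\widehat{T_1} \sqcup \widehat{\theta T_2}) \;=\; \srt(\widehat{T_1} \sqcup \widehat{T_2}),
\]
which I would prove by writing both sides as $\srt(\widehat{T_1}) + \srt(\widehat{\theta T_2})$ and $\srt(\widehat{T_1}) + \srt(\widehat{T_2})$ via Proposition~\ref{prop:additive}, and then invoking Lemma~\ref{lem:tangle-equiv} to see that $\srt(\widehat{\theta T_2}) = \srt(\widehat{T_2})$.

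Granting this identity, the first inequality is immediate: Lemma~\ref{lem:tangle-first-bound} gives $\vert s_r([\widehat{T_1 \cdot T_2}]) - \srt(\widehat{T_1} \sqcup \widehat{\theta T_2}) \vert \leq (1-r)n$, and we rewrite the second term as $\srt(\widehat{T_1} \sqcup \widehat{T_2})$. For the second inequality, I would apply the (just-proved) first inequality to the pair $(T_1, T_2)$ and to the pair $(T_1, \theta T_2)$, yielding
\[
    \vert s_r([\widehat{T_1 \cdot T_2}]) - \srt(\widehat{T_1} \sqcup \widehat{T_2}) \vert \leq (1-r)n
    \quad\text{and}\quad
    \vert s_r([\widehat{T_1 \cdot \theta T_2}]) - \srt(\widehat{T_1} \sqcup \widehat{\theta T_2}) \vert \leq (1-r)n .
\]
Since the two disjoint-union terms agree by the identity above, the triangle inequality gives $\vert s_r([\widehat{T_1 \cdot T_2}]) - s_r([\widehat{T_1 \cdot \theta T_2}]) \vert \leq 2(1-r)n$. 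The parenthetical claim $[\widehat{T_1 \cdot \theta T_2}] = [\widehat{\theta T_1 \cdot T_2}]$ then follows because $\theta$ acts as the identity on isotopy classes of links in $S^3$ once the annular axis is forgotten, applied to the full composite $T_1 \cdot T_2$.

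The main point to watch is the \emph{side} on which the involution acts: Lemma~\ref{lem:tangle-first-bound} attaches $n$ one-handles to $\widehat{T_1} \sqcup \widehat{\theta T_2}$ — it is the \emph{second} factor that is rotated — so Lemma~\ref{lem:tangle-equiv} is genuinely needed to strip off the $\theta$ before the terms can be matched. I would also check that $T_1 \cdot \theta T_2$ (and $\theta T_1 \cdot T_2$) are legitimate composable pairs in $\nntangles$, so that all of $s_r([\widehat{T_1 \cdot T_2}])$, $s_r([\widehat{T_1 \cdot \theta T_2}])$, $\srt(\widehat{T_1} \sqcup \widehat{T_2})$, and $\srt(\widehat{T_1} \sqcup \widehat{\theta T_2})$ are defined; this is routine but worth a remark. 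Beyond that the argument is just bookkeeping with the triangle inequality, so I do not anticipate a serious obstacle.
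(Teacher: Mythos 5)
Your proof is correct and follows the paper's approach exactly: the first inequality is obtained by rewriting the term $\srt(\widehat{T_1}\sqcup\widehat{\theta T_2})$ from Lemma~\ref{lem:tangle-first-bound} as $\srt(\widehat{T_1}\sqcup\widehat{T_2})$ via Proposition~\ref{prop:additive} and Lemma~\ref{lem:tangle-equiv}, and the second is the triangle inequality applied to the instances of the first for $(T_1,T_2)$ and $(T_1,\theta T_2)$, which share the same disjoint-union term. Your remark about the use of additivity and about composability of $T_1\cdot\theta T_2$ (which the theorem statement implicitly presupposes) makes explicit what the paper's terse proof leaves implicit, but it is the same argument.
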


\subsection{Applications to braids and more properties}
\label{subsection:braids}
The annular \SSS\ invariants are well-suited to studying annular braid closures equipped with their braid-like orientation. In fact, we will see many similarities in these applications to the annular $d_t$ invariants of Grigsby-Licata-Wehrli \cite{GLW-dt}. 

Let $\sigma  \in \mathfrak{B}_n$ be an $n$-strand braid and $\widehat{\sigma} \subset A \times I$ its annular closure. The braid-like orientation $\sigma_\uparrow$ of $\widehat{\sigma}$ is the one whose strands all wind positively around the braid axis. Its diagram winds counterclockwise about $\BBX$ in $S^2 \setminus \{\BBO, \BBX \}$. We shall abbreviate $s_{r,t}(\widehat{\sigma}, o_\uparrow)$ to $\srt(\sigmahat)$.

\begin{lemma}
Let $\sigma \in \mathfrak{B}_n$ have writhe $w$. Then 
\[(1-r)(w - (1-t)n) \leq \srt(\sigmahat)\]
for all $r\in [0,1]$,  $t \in [0,1]$.
\label{lem:boundsrt}
\end{lemma}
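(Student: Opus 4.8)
The plan is to reduce the lower bound for $\srt(\sigmahat)$ to the behavior of $\srt$ under the elementary cobordisms relating $\sigmahat$ to the identity braid closure $\idclosure_n$, whose invariant was computed in the example above. Recall that if $\sigma$ has $w_+$ positive crossings and $w_-$ negative crossings (so $w = w_+ - w_-$), then resolving every crossing of $\sigmahat$ to its oriented resolution yields exactly the diagram of $\idclosure_n$. Diagrammatically, I would build an oriented cobordism $F$ from $\idclosure_n$ to $\sigmahat$ (or the reverse, whichever sign works out) that consists of one saddle per crossing: each saddle replaces the oriented (0- or $\infty$-) resolution of a crossing with the crossing itself. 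Since all crossings of a braid lie away from the braid axis in the sense that the saddles can be taken to be annular, the cobordism $F$ uses $a_1 = w_+ + w_- = |w_+| + |w_-|$ annular saddles and no births, deaths, or non-annular moves. Here I should be careful: a single saddle between two resolutions of a crossing is an oriented cobordism only when the two endpoints carry compatible orientations, which is why one works with the oriented resolution; the resulting surface is a disjoint union of bands, and each component of $F$ has boundary on both $\idclosure_n$ and $\sigmahat$ provided we are slightly careful, though for the inequality direction we only need boundary on one side.

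Next I would apply Theorem~\ref{thm:main-thm} part~\ref{cobordismproperty}. Orienting the cobordism as $F\colon (\idclosure_n, o) \to (\sigmahat, o_\uparrow)$, the first inequality of that part gives
\[
    s_{r,t}(\idclosure_n, o) - s_{r,t}(\sigmahat) \leq (r-1)(a_0 - a_1 + b_0(1-t)) = (r-1)(-a_1) = (1-r)a_1,
\]
since $a_0 = b_0 = 0$ and $a_1 = w_+ + w_-$. Using the example computation $s_{r,t}(\idclosure_n, o) = -n(1-r)(1-t)$, this rearranges to
\[
    \srt(\sigmahat) \geq -n(1-r)(1-t) - (1-r)(w_+ + w_-).
\]
That is the wrong sign on $w$, so instead I should run the cobordism the other way, $F\colon (\sigmahat, o_\uparrow) \to (\idclosure_n, o)$, again with $a_1 = w_+ + w_-$ saddles, giving $\srt(\sigmahat) - s_{r,t}(\idclosure_n,o) \le (1-r)a_1$, i.e. $\srt(\sigmahat) \le -n(1-r)(1-t) + (1-r)(w_++w_-)$ — still not what we want. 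The resolution is that a \emph{negative} crossing and a \emph{positive} crossing should be treated asymmetrically: reading off the precise $\grrt$-degree of the saddle map (Proposition~\ref{prop:cob-filt-degrees} and the corollary on cobordism tridegrees), one sees the relevant bound is really controlled by $w$ rather than $w_+ + w_-$. Concretely, I would instead argue directly with the canonical cycle: the image of $g(o)$ for $\idclosure_n$ under the composite of the $w_-$ saddle maps turning negative crossings into their oriented resolutions maps to $g(o_\uparrow)$, and the $\grrt$-degree shift of each such saddle is $(1-r)$ (not $(1-r)(-1)$) because a negative-crossing oriented resolution sits at a different homological slice; while the $w_+$ positive saddles each shift by $(1-r)(-1)$. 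Summing gives a net shift of $(1-r)(w_- - w_+)\cdot(-1) = (1-r)w$... — i.e. one carefully bookkeeps the homological and quantum degrees of the canonical generators through the resolution cube.

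The cleanest route, and the one I would ultimately write up, avoids sign confusion by going through a chain-level argument rather than the black-box cobordism inequality: pick the representative $g(o_\uparrow) \in \CC_\ftot(\sigmahat)$ and estimate $\grrt(g(o_\uparrow))$ directly. The canonical generator $g(o_\uparrow)$ lives over the oriented resolution vertex $u_o$, which has $|u_o| - n_- = 0$ contribution shifted by the writhe: $\gr_h(g(o_\uparrow)) = 0$, and its quantum and $k$-gradings are computed from the labelings $v_\pm, w_\pm$ on the $n$ nested nontrivial circles of the oriented resolution of a braid closure (all $n$ circles are nontrivial, winding around $\BBX$). Plugging $\gr_h = 0$, $\gr_q(g(o_\uparrow)) = w - n + (\text{label contributions})$, $\gr_k(g(o_\uparrow)) = \pm$-count, into $\grrt(g(o_\uparrow)) = (1-r)(\gr_q - t\gr_k)$ and using that $\grrt[g(o_\uparrow)] \geq \grrt(g(o_\uparrow))$ gives exactly $\srt(\sigmahat) \geq (1-r)(w - (1-t)n)$ after checking the labels on the oriented braid-closure resolution contribute $\gr_q - t\gr_k = w - n + tn$ (equivalently $-n + tn$ worth from the circles plus $w$ from the writhe normalization, matching the $\idclosure_n$ example when $w = 0$). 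The main obstacle is precisely this grading bookkeeping on the oriented resolution of $\sigmahat$: one must correctly track how the writhe $w = n_+ - n_-$ enters $\gr_h$ and $\gr_q$ through the $-n_-$ and $n_+ - 2n_-$ normalizations, and verify that in the oriented resolution every Seifert circle of a braid is nontrivial (separates $\BBX$ from $\BBO$), so that all $n$ circles contribute to $\gr_k$. Once that computation is pinned down, the inequality $\srt(\sigmahat) = \grrt[g(o_\uparrow)] \geq \grrt(g(o_\uparrow)) = (1-r)(w - (1-t)n)$ is immediate.
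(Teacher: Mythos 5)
Your final paragraph lands on exactly the paper's proof: compute $\gr_h(g(o_\uparrow))=0$, $\gr_q(g(o_\uparrow))=-n+w$, $\gr_k(g(o_\uparrow))=-n$ (using that every Seifert circle of a braid closure is nontrivial), and conclude via $\srt(\sigmahat)=\grrt[g(o_\uparrow)]\geq\grrt(g(o_\uparrow))=(1-r)(w-(1-t)n)$. The detours through the cobordism inequality in your first two paragraphs are confused and correctly abandoned; the paper never attempts that route and goes straight to the chain-level grading count.
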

\begin{proof}
We calculate from the definitions:
\begin{align*}
\gr_h(g(o_\uparrow)) = 0,
\quad
\quad
\gr_q(g(o_\uparrow)) = - n + w,
\quad
\quad
\text{and }
\gr_k(g(o_\uparrow)) = - n.
\end{align*}
Thus, for all $r\in [0,1]$,  $t \in [0,1]$, 
\[\grrt(g(o_\uparrow)) = (1-r)(w - (1-t)n) \leq \grrt([g(o_\uparrow)]) = \srt(\sigmahat) \]
as desired.
\end{proof}

When $r=0$, the lower bound on $\srt$ given by Lemma \ref{lem:boundsrt} is analogous to lower bound on the $d_t$ invariant of \cite[Lemma 6]{GLW-dt}. In particular, we recover a  ``$s_{\F_2}$-Bennequin inequality" (proven by Plamenevskaya \cite[Proposition 4]{Plamenevskaya-transverse} and Shumakovitch  \cite[Lemma 4.C]{Shumakovitch-s-Bennequin} originally for $s$ over $\Q$ coefficients).

\begin{corollary}
\label{cor:bennequin}
Let $\sigma \in \mathfrak{B}_n$ have writhe $w$. Then
\[\text{sl}(\sigmahat) \leq s_{\F_2}(\sigmahat) - 1\]
where $s_{\F_2}$ is the Rasmussen concordance invariant over $\F_2$ and $\text{sl}(\sigmahat) = -n + w$ is the self-linking number of the transverse link represented by $\sigmahat$. 
\end{corollary}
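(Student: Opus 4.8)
The plan is to obtain Corollary~\ref{cor:bennequin} as an immediate consequence of Lemma~\ref{lem:boundsrt} together with part~\ref{thm:s00} of Theorem~\ref{thm:main-thm}. The point is that the lower bound of Lemma~\ref{lem:boundsrt} holds for \emph{every} pair $r,t\in[0,1]$, and evaluating it at the corner $(r,t)=(0,0)$ reproduces exactly the self-linking number on the left-hand side, while the identification $s_{0,0}=s_{\F_2}-1$ converts the right-hand side into $s_{\F_2}(\sigmahat)$.

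Concretely, first I would specialize Lemma~\ref{lem:boundsrt} to $r=0$, $t=0$, which gives
\[
w-n \;=\; (1-0)\bigl(w-(1-0)n\bigr) \;\leq\; s_{0,0}(\sigmahat).
\]
Recognizing $w-n$ as the self-linking number $\text{sl}(\sigmahat)=-n+w$ of the transverse link represented by $\sigmahat$ (Bennequin's formula), this reads $\text{sl}(\sigmahat)\leq s_{0,0}(\sigmahat)$. Next I would invoke part~\ref{thm:s00} of Theorem~\ref{thm:main-thm}, which identifies $s_{0,0}(\sigmahat,o_\uparrow)=s_{\F_2}(\sigmahat,o_\uparrow)-1$; substituting this into the previous inequality yields
\[
\text{sl}(\sigmahat) \;\leq\; s_{\F_2}(\sigmahat)-1,
\]
as claimed. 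This also recovers the $s_{\F_2}$-Bennequin inequality of Plamenevskaya and Shumakovitch.

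Since every ingredient is already in place, there is essentially no technical obstacle; the only points requiring care are bookkeeping. One must check that the specialization $(r,t)=(0,0)$ is legitimate — it is, as Lemma~\ref{lem:boundsrt} is stated for all $r,t\in[0,1]$ — and, when $\sigmahat$ has more than one component, that the equality $s_{0,0}=s_{\F_2}-1$ is available in the appropriate link version (via Lemma~5.5 of \cite{SSS} applied to the oriented resolution, exactly as in the proof of part~\ref{thm:s00}), after which the two-line argument goes through verbatim. I would also note that enlarging $t$ in Lemma~\ref{lem:boundsrt} produces the formally stronger bound $w-(1-t)n\leq s_{0,t}(\sigmahat)$, but this does not sharpen the transverse inequality, since $s_{0,t}$ for $t>0$ is neither a transverse invariant nor pinned to $s_{\F_2}$.
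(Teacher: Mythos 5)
Your argument is exactly the paper's: specialize Lemma~\ref{lem:boundsrt} to $(r,t)=(0,0)$ and apply part~\ref{thm:s00} of Theorem~\ref{thm:main-thm} to rewrite $s_{0,0}$ as $s_{\F_2}-1$. Your additional remark about the multi-component case is a worthwhile caveat that the paper's one-line proof glosses over, but the route is the same.
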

\begin{proof}
We specialize the result of Lemma \ref{lem:boundsrt} to $r = 0$ and $t=0$.
\end{proof}
By work of Cotton Seed (see Remark \ref{rem:seeds}), in some cases this produces a stronger upper bound on the self-linking number than the ``$s$-Bennequin inequality" for $s$ over $\Q$. Moreover, Lemma \ref{lem:boundsrt} gives rise to a new family of Bennequin-type inequalities. 

\begin{corollary}
Let $\sigma \in \mathfrak{B}_n$ have writhe $w$. Then 
\[(1-r)\text{sl}(\sigmahat) \leq s_{r,0}(\sigmahat)\]
for all $r \in [0,1]$.
\end{corollary}
\begin{proof}
We specialize the result of Lemma \ref{lem:boundsrt} to $t=0$.
\end{proof}
Furthermore, Theorem \ref{thm:qp} shows that for quasipositive braid closures the $s_{\F_2}$-Bennequin bound is sharp. 

\begin{theorem} 
If $\sigma$ is a quasipositive braid of index $n$ and writhe $w \geq 0$, we have
\[\srt(\sigmahat) = (1-r)(w - (1-t)n) \]
for all $r \in [0,1]$ and $t \in [0,1]$.
\label{thm:qp}
\end{theorem}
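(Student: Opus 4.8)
The plan is to upgrade the lower bound of Lemma~\ref{lem:boundsrt} to an equality by producing a matching upper bound from the cobordism inequality, Theorem~\ref{thm:main-thm}(\ref{thm:cobordism}). Since $\sigma$ is quasipositive it has a presentation $\sigma = \prod_{j=1}^{w}\omega_j\sigma_{i_j}\omega_j^{-1}$ with each $\omega_j\in\mathfrak{B}_n$, and the number of band generators appearing is exactly the writhe $w\geq 0$. The key local observation is that inserting a single positive band generator $\omega\sigma_i\omega^{-1}$ into a braid word and then taking the annular closure changes the closure by one oriented saddle: resolving the lone crossing $\sigma_i$ the oriented way returns the closure of the shorter word, and the saddle cobordism between a positive crossing and its oriented resolution is an oriented $1$-handle, compatible with the all-upward braid-like orientation on both sides. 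By transversality this saddle is annular.

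Concretely, I would set $\beta_0 = \mathbb{1}_n$ and $\beta_k = \prod_{j=1}^{k}\omega_j\sigma_{i_j}\omega_j^{-1}$, so $\beta_w = \sigma$, and concatenate the elementary movies relating $\widehat{\beta_{k-1}}$ to $\widehat{\beta_k}$ to obtain an oriented cobordism
\[
F\colon (\idclosure_n, o_\uparrow) \longrightarrow (\sigmahat, o_\uparrow)
\]
built from $w$ annular saddles and \emph{no} births or deaths; that is, $a_0 = 0$, $a_1 = w$, $b_0 = 0$ in the notation of Theorem~\ref{thm:main-thm}(\ref{thm:cobordism}). Since $F$ is obtained from the $n$ product cylinders over the components of $\idclosure_n$ by attaching bands only, no connected component of $F$ is ever created disjoint from these cylinders, so every component of $F$ has a boundary component in $\idclosure_n$; and since $F$ has no deaths, the time function has no interior local maxima on any component, so every component also has a boundary component in $\sigmahat$. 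Thus both hypotheses of the absolute-value form of Theorem~\ref{thm:main-thm}(\ref{thm:cobordism}) are satisfied.

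Applying that inequality to $F$, together with the computation $\srt(\idclosure_n, o_\uparrow) = -n(1-r)(1-t)$ from the Example above, gives
\[
\bigl|\, -n(1-r)(1-t) - \srt(\sigmahat)\,\bigr| \;\le\; (r-1)\bigl(a_0 - a_1 + b_0(1-t)\bigr) \;=\; (1-r)\,w ,
\]
so in particular $\srt(\sigmahat) \le -n(1-r)(1-t) + (1-r)w = (1-r)\bigl(w - (1-t)n\bigr)$. Combined with the reverse inequality $\srt(\sigmahat) \ge (1-r)\bigl(w-(1-t)n\bigr)$ from Lemma~\ref{lem:boundsrt}, this forces the claimed equality for all $r,t\in[0,1]$.

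I expect the only genuine work to be the bookkeeping in the second paragraph: checking carefully that a quasipositive word with $w$ band generators produces exactly a $w$-saddle cobordism from $\idclosure_n$ with no births or deaths, that this cobordism is orientable compatibly with the braid-like orientations on both ends, and that each of its components meets both $\idclosure_n$ and $\sigmahat$. Everything after that is a direct substitution into Theorem~\ref{thm:main-thm} and the identity-braid-closure computation, and the argument parallels the treatment of the $d_t$ invariant for quasipositive braids in \cite{GLW-dt}.
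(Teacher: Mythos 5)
Your proposal is correct and is essentially the paper's argument: the lower bound comes from Lemma \ref{lem:boundsrt}, and the upper bound comes from applying the cobordism inequality of Theorem \ref{thm:main-thm} to the $w$-saddle cobordism between $\sigmahat$ and $\idclosure_n$ together with the computation $\srt(\idclosure_n) = -n(1-r)(1-t)$. The only cosmetic difference is that you run the cobordism from $\idclosure_n$ to $\sigmahat$ and invoke the absolute-value form (hence check boundary components on both ends), whereas the paper runs it from $\sigmahat$ to $\idclosure_n$ so that the one-sided inequality, requiring only that each component meet $\sigmahat$, already gives the upper bound.
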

\begin{proof}
Lemma \ref{lem:boundsrt} gives us the lower bound 
\[(1-r)(w - (1-t)n)   \leq \srt(\sigmahat).\]
To obtain the upper bound, note that if $\sigmahat$ is quasipositive, there is an oriented (annular) cobordism $F$ from $\sigmahat$ to $\idclosure_n$ obtained by performing an orientable saddle cobordism near each quasipositive generator of $\sigma$ as in \cite[Theorem 2]{GLW-dt} and \cite[Fig.\ 7]{Plamenevskaya-transverse}, and each component of this cobordism has a boundary on $\sigmahat$. Using the crossingless diagram for $\idclosure_n$, we have 
\[\srt(\idclosure_n) = -(1-r)(1-t)n.\]
Part (\ref{cobordismproperty}) of Theorem \ref{thm:main-thm} tells us that $\srt(\sigmahat) - \srt(\idclosure_n) \leq (1-r)w.$ Thus, 
\[\srt(\sigmahat) \leq -(1-r)(1-t)n + (1-r)w \]
as desired.
\end{proof}

\begin{theorem} 
\label{thm:writhe}
Suppose $\sigma \in \mathfrak{B}_n$ has writhe $w$. Then $s_{r, 1}(\sigmahat) = (1-r) w$.
\end{theorem}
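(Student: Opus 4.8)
The lower bound $s_{r,1}(\sigmahat)\ge(1-r)w$ is immediate from Lemma~\ref{lem:boundsrt} evaluated at $t=1$, so only the reverse inequality $s_{r,1}(\sigmahat)\le(1-r)w$ requires proof. Write $c_+,c_-$ for the numbers of positive and negative crossings of $\sigma$, so $w=c_+-c_-$. My plan is to reduce to the case of a negative braid via an oriented cobordism, and then handle negative braids using mirror symmetry together with Theorem~\ref{thm:qp}.

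For the reduction: performing the oriented resolution at each of the $c_+$ positive crossings of $\sigma$ produces a negative braid $\sigma_-\in\mathfrak{B}_n$ of index $n$ and writhe $-c_-$, and this is realized by an oriented cobordism $F\colon(\sigmahat,o_\uparrow)\to(\widehat{\sigma_-},o_\uparrow)$ consisting of $c_+$ saddles, with $a_0=b_0=0$ and every component of $F$ meeting $\sigmahat$. Theorem~\ref{thm:main-thm}(\ref{cobordismproperty}) then gives
\[
 s_{r,1}(\sigmahat)\ \le\ s_{r,1}(\widehat{\sigma_-})+(1-r)c_+,
\]
so it suffices to establish the theorem for the negative braid $\sigma_-$, i.e.\ $s_{r,1}(\widehat{\sigma_-})=-(1-r)c_-$. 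Now the mirror $\overline{\sigma_-}$ is a positive (hence quasipositive) braid of index $n$ and writhe $c_-$, so Theorem~\ref{thm:qp} at $t=1$ gives $s_{r,1}(\widehat{\overline{\sigma_-}})=(1-r)c_-$. Granting a mirror-symmetry statement $s_{r,t}(\overline L,\overline o)=-s_{r,t}(L,o)$ for oriented annular links, whose correction constant vanishes for braid closures carrying the braid-like orientation, we get $s_{r,1}(\widehat{\sigma_-})=-s_{r,1}(\widehat{\overline{\sigma_-}})=-(1-r)c_-$, and substituting into the displayed inequality yields $s_{r,1}(\sigmahat)\le(1-r)(c_+-c_-)=(1-r)w$, as desired.

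The main obstacle is the mirror-symmetry statement. Its algebraic core, that $\CC_\ftot(\overline\D)$ is the $\F_2$-dual of $\CC_\ftot(\D)$ with $\gr_h$ and $\gr_q$ negated and with the canonical generators exchanged by the duality pairing, is the annular enhancement of a statement of \SSS. The new point is the behaviour of the annular grading: mirroring a diagram preserves the mod-$2$ nesting of circles relative to an arc from $\BBX$ to $\BBO$ (as in the last paragraph of the proof of Lemma~\ref{lem:tridegree}), hence preserves the trivial/nontrivial status of each circle while exchanging each $v_+$-label with a $v_-$-label; therefore $\gr_k$ is negated, $\gr_q-\gr_k$ is negated, and so $\gr_{r,t}$ is negated and the duality pairing is filtered of the expected degree. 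Passing to homology and tracking the filtration under dualization — exactly as in Rasmussen's proof that $s(\overline K)=-s(K)$ — gives $s_{r,t}(\overline L,\overline o)=-s_{r,t}(L,o)$ up to a constant coming from the normalization of $\CC_\ftot(\emptyset)$, and this constant is checked to vanish for braid-like-oriented braid closures by evaluating on the canonical generator, whose $(\gr_h,\gr_q,\gr_k)$-degrees negate under mirroring while the braid-like orientations of $\sigma$ and $\overline\sigma$ correspond. (If one prefers to avoid formulating a general mirror-symmetry lemma, one could instead try to prove the ``quasinegative'' analogue of Theorem~\ref{thm:qp} directly, namely that a negative braid closure attains the lower bound of Lemma~\ref{lem:boundsrt} at $t=1$; but I do not see how to obtain that upper bound from oriented cobordisms into $\idclosure_n$ alone, so some use of duality seems unavoidable.)
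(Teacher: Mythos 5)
Your lower bound and your reduction to a negative braid via the oriented cobordism resolving the $c_+$ positive crossings are both correct, and the observation that a cobordism into $\idclosure_n$ alone cannot give the needed upper bound for negative braids is a good one. The gap is the mirror-symmetry formula $s_{r,t}(\overline L,\overline o)=-s_{r,t}(L,o)$, which is neither proven in the paper nor true as stated, and the "correction constant" you allude to does not in fact vanish for braid-like-oriented braid closures. To see this, take any braid $\sigma$ whose closure $K=\sigmahat$ is a knot and use Theorem~\ref{thm:main-thm}(\ref{thm:s00}): $s_{0,0}(K,o)=s_{\F_2}(K)-1$. Since $s_{\F_2}(\overline K)=-s_{\F_2}(K)$, we get $s_{0,0}(\overline K,\overline o)=-s_{\F_2}(K)-1=-s_{0,0}(K,o)-2$, so the correction is $-2$ at $(r,t)=(0,0)$, not $0$. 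Even at $t=1$ the formula is false in general: the crossingless unknot $U$ disjoint from the axis satisfies $s_{r,1}(U)=-(1-r)\ne (1-r)=-s_{r,1}(\overline U)$.

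The deeper issue is that your dualization sketch, "exactly as in Rasmussen's proof," does not output the statement you want. Rasmussen's argument pairs the $s_{\min}$-type quantity of $\overline K$ with the $s_{\max}$-type quantity of $K$ (he proves $s_{\min}(\overline K)=-s_{\max}(K)$ and then uses the knot-specific identity $s_{\max}=s_{\min}+2$); the invariant $\srt(L,o)=\grrt([g(o)])$ defined in this paper is the $s_{\min}$-type quantity, so duality relates $\srt(\overline L,\overline o)$ to a \emph{different} functional of the filtered homology of $L$, not to $-\srt(L,o)$. Closing that gap — showing that the $s_{\max}$-type quantity equals $\srt$ at $t=1$ for braid closures — is not a "normalization of $\CC_\ftot(\emptyset)$" and, as far as I can tell, carries essentially all the content of the theorem, making the route circular. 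The paper avoids duality entirely: at $t=1$ every Lee-basis generator supported at the braid-like resolution has $\gr_{r,1}=(1-r)w$, and a direct projection-onto-that-resolution argument shows every representative of $[g(o)]$ must have a nonzero component there, giving the upper bound without any mirror considerations. You would need either to supply a precise, proven annular duality statement for the full two-parameter family restricted to $t=1$ (including the correct gap between the $s_{\min}$- and $s_{\max}$-type quantities for multi-component braid closures) or to switch to an argument closer to the paper's that reads the $t=1$ grading off the braid-like resolution directly.
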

\begin{proof}
Let $\D$ be a diagram of the annular braid closure $\sigmahat \subset A \times I$ and let $\CC$ denote the graded vector space underlying the \SSS\ complex. The vector space $\CC$ is generated by resolutions of $\D$ whose circles are labeled by $a = v_-+v_+$ or $b = v_-$. The set of $a/b$ markings of resolutions of $\D$ forms a basis for $\CC$. (This is not a $\Z \oplus \Z \oplus \Z$ \emph{filtered} basis for $(C, \partial_{ftot}$)). We will denote the set of these generators by $S$. We will partition $S$ into three subsets:
\begin{enumerate}
    \item $S_1 = \{g(o)\}$
    \item $S_2 = \{x \in (S \setminus S_1) \mid x \text{ is a labeling of the braid-like resolution of } \D \}$
    \item $S_3 = S \setminus (S_1 \cup S_2)$
\end{enumerate}
(Recall that the braid-like resolution of $\D$ is the oriented resolution for the braid-like orientation $o_\uparrow$.) 
Corresponding to this partition of $S$, there is a direct sum decomposition of $\CC$ into subspaces: $\CC = V_1 \oplus V_2 \oplus V_3$, with $V_i = \mathrm{Span}(S_i)$. 

Let 
\begin{align*}
    p &: \CC \to V_1 \oplus V_2
    \\
    q &: V_1 \oplus V_2 \to V_1
\end{align*}
denote the projection maps. Note that $p$ and $q$ satisfy the following properties. 
\begin{enumerate}
    \item With respect to the $\gr_{r, 1}$-grading on $\CC$, $p$ is a grading-preserving map of graded vector spaces. 
    \item $q \circ p $ is a chain map.
    \item $(q \circ p) (g(o)) = g(o)$.
\end{enumerate}
We can now prove the following claims.
\begin{enumerate}
    \item[\textbf{Claim 1}.] If $z \in \CC$ is a cycle satisfying $[z] = [g(o)]$, then $p(z) \neq 0 \in \CC$. 
    \begin{proof}
    Suppose $z = g(o) + \dftot(x)$ for some $x \in \CC$. Hence
    \[(q \circ p) (z) = (q \circ p)(g(o)) + (q\circ p)(\dftot x) = g(o) + \dftot (q\circ p)(x).  \]
    Thus $[(q \circ p)(z)] = [g(o)]$, and since $g(o)$ is nonzero, this shows that $p(z)$ is nonzero. 
    \end{proof}
    \item[\textbf{Claim 2}.] $s_{r,1} (\sigmahat) \leq w.$
    \begin{proof}
    Let $z$ be a representative of $[g(o)]$ for which $\gr_{r, 1}(z) = \gr_{r, 1}[g(o)]$. Then $p(z)$ is nonzero by Claim 1. Since all elements in $V_1 \oplus V_2$ have $\gr_{r,1} = (1-r) w$, we have $\gr_{r,1}(p(z)) = (1-r) w$. Since $p$ is graded as a map of $\gr_{r,1}$-graded vector spaces, this implies 
    \[ s_{r,1}(\sigmahat) = \gr_{r,1}(z) \leq \gr_{r,1}(p(z))= (1-r)w.\]
    \end{proof}
\end{enumerate}
On the other hand, 
    \[s_{r,1}(\sigmahat) =  \gr_{r, 1}[g(o)] \geq  \gr_{r, 1}(g(o)) = (1-r)w,\]
    so $s_{r,1}(\sigmahat) = (1-r) w$ as desired.
\end{proof}
Recall from \cite{GLW-dt} that the obstruction to being quasipositive from $d_t$ is no more sensitive than the one coming from the sharpness of the $s$-Bennequin bound. Similarly, the obstruction to being quasipositive from $s_{0, t}$ is no more sensitive than the one coming from the sharpness of the $s_{\F_2}$-Bennequin bound.
\begin{corollary}
If $\sigma \in \mathfrak{B}_n$ has writhe $w$, then 
\[ s_{0, t}(\sigmahat) = w - (1-t)n \quad \iff \quad \text{sl}(\sigmahat) = s_{\F_2}(\sigmahat) - 1. \]
\end{corollary}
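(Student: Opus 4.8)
The plan is to prove the two implications separately; the forward direction is immediate and the reverse direction carries all the content. For ($\Rightarrow$), I would specialize the assumed identity $s_{0,t}(\sigmahat) = w-(1-t)n$ to $t=0$, obtaining $s_{0,0}(\sigmahat) = w-n$. By Theorem \ref{thm:main-thm}(\ref{thm:s00}), $s_{0,0}(\sigmahat) = s_{\F_2}(\sigmahat)-1$, and by definition $\text{sl}(\sigmahat) = -n+w$, so $\text{sl}(\sigmahat) = s_{\F_2}(\sigmahat)-1$.

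For ($\Leftarrow$), assume $\text{sl}(\sigmahat) = s_{\F_2}(\sigmahat)-1$, which by Theorem \ref{thm:main-thm}(\ref{thm:s00}) is exactly the statement $s_{0,0}(\sigmahat) = w-n$. I would then work with the auxiliary function $f(t) = s_{0,t}(\sigmahat) - \big(w-(1-t)n\big)$ on $[0,1]$. Here $f \ge 0$ by Lemma \ref{lem:boundsrt} at $r=0$; $f(0)=0$ by hypothesis; and $f$ is piecewise-linear by Theorem \ref{thm:main-thm}(\ref{thm:piecewiselinear}). The goal is to promote $f(0)=0$ to $f\equiv 0$, which is equivalent to the desired formula.

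The crucial ingredient is the slope bound of Theorem \ref{thm:main-thm}(\ref{thm:slopes}) at $r=0$: the right-hand slope of $s_{0,t}(\sigmahat)$ at each $t_0\in[0,1)$ equals $-\gr_k(x)$ for a distinguished generator $x$, hence lies in $\{-\omega,\dots,\omega\}$ with $\omega=\omega(\sigmahat)$; since an $n$-strand braid closure admits a diagram meeting an arc from $\BBX$ to $\BBO$ in $n$ points, $\omega\le n$, so this slope is $\le n$. Equivalently, every right-hand slope of $f$ on $[0,1)$ is $\le 0$. I would then induct over the finitely many linear pieces $0=a_0<a_1<\cdots<a_m=1$ of $f$: assuming $f(a_{i-1})=0$, the function $f$ on $[a_{i-1},a_i]$ is linear, equals $0$ at the left endpoint, has slope $\le 0$, and stays $\ge 0$, so it vanishes identically on that piece; hence $f(a_i)=0$ and the induction continues, yielding $f\equiv 0$ on $[0,1]$. (As a consistency check, $f(1)=0$ also follows from Theorem \ref{thm:writhe} at $r=0$, which gives $s_{0,1}(\sigmahat)=w$.)

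I do not expect a genuine obstacle: the only non-citation step is the elementary fact that a nonnegative piecewise-linear function vanishing at $0$ whose right-hand slopes are all $\le 0$ must vanish identically, which is the induction above. The one point worth flagging is that the identification $s_{0,0}=s_{\F_2}-1$ invoked in both directions is Theorem \ref{thm:main-thm}(\ref{thm:s00}), stated for knots, so this corollary should be read with $\sigmahat$ a knot (or with the appropriate link version of $s_{\F_2}$ for which the analogous identity holds).
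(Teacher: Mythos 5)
Your proof is correct. The forward direction matches the paper exactly. For the converse, you and the paper run the same ``pinching by slope'' argument, but you feed it a different second ingredient: the paper combines the hypothesis $s_{0,0}(\sigmahat)=-n+w$ with the endpoint value $s_{0,1}(\sigmahat)=w$ from Theorem~\ref{thm:writhe}, so that the piecewise-linear function has average slope $n$ on $[0,1]$ and, since slopes are bounded above by $n$, must have slope exactly $n$ everywhere; you instead replace the $t=1$ endpoint with the uniform lower bound $s_{0,t}(\sigmahat)\ge w-(1-t)n$ from Lemma~\ref{lem:boundsrt}, and squeeze $f(t)=s_{0,t}-(w-(1-t)n)$ between $0$ and $0$ by induction over linear pieces. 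Both are valid. Your version is arguably a touch more self-contained, since Lemma~\ref{lem:boundsrt} is a one-line grading computation while Theorem~\ref{thm:writhe} requires the projection argument onto the braidlike resolution; the trade-off is that your proof carries a short extra induction that the paper packages as ``these observations immediately imply.'' You also correctly flag that the identity $s_{0,0}=s_{\F_2}-1$ from Theorem~\ref{thm:main-thm}(\ref{thm:s00}) is stated for knots, so the corollary should be read with $\sigmahat$ a knot (or with a link extension of $s_{\F_2}$); the paper glosses over this point.
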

\begin{proof}
Recalling that $\text{sl}(\sigmahat) = -n+w$, the forward implication follows from setting $r =0$ and $t=0$ and applying part \ref{thm:s00} of Theorem \ref{thm:main-thm}. 

For the converse, note the following:
\begin{enumerate}
    \item The function $s_{0, t}$ is piecewise linear (by part \ref{thm:piecewiselinear} of Theorem \ref{thm:main-thm}).
    \item The slope of $s_{0, t}$ with respect to $t$ is bounded above by $n$ (by part \ref{thm:slopes} of Theorem \ref{thm:main-thm}).
    \item By the hypothesis, $s_{0,0} = -n + w$.
    \item We have $s_{0,1} = w$ by Theorem \ref{thm:writhe}. 
\end{enumerate}
These observations immediately imply the reverse implication. 
\end{proof}

Let
    \[
        \mroto(L,o) = 
        \lim_{t \to t_0^+}
        \frac{
            \srt(L,o) 
            - s_{r,t_0}(L,o)
        }{t - t_0}
    \]
We have a \emph{sufficient} condition for a braid conjugacy class to be right-veering. This property is analogous to that of the $d_t$ invariant (cf. \cite{GLW-dt} Theorem 4). The bounds on $m_{r,t}(L,o)$ by part \ref{thm:slopes} of Theorem \ref{thm:main-thm} will be essential in the following theorem. 

\begin{theorem}
Let $\sigma \in \mathfrak{B}_n$. Fix $r = r_0 < 1$. If $s_{r_0, t}(\sigmahat)$ attains maximal slope at some $t = t_0 < \frac{1}{2}$, that is, $\mroto(\sigmahat) = n$ for some $t_0 \in [0, \frac{1}{2})$, then $\sigma$ is right-veering. 

\begin{proof}
The proof relies on Hubbard-Saltz's annular invariant $\kappa$ \cite{Hubbard-Saltz-kappa}, which has the following property:
if $\sigma \in B_n$ is \emph{not} right-veering, then $\kappa(\sigmahat) = 2$. 

First, we review the definition of $\kappa$ and explain how it is related to the \SSS\ complex. Throughout this proof, let $G$ denote the set of distinguished generators of the Khovanov complex $\CC = \langle g \in G \rangle$, over $\F_2$ (which is equal to $\Cftot$ as a vector space). Let $\sigma \in B_n$. A generic cycle can be thought of as a subset of $G$, as we are working over $\F_2$.

Recall from Lemma \ref{lem:tridegree} that $(\CC, d_1)$ is $\gr_k$ filtered: let $\CF_c(\CC) = \langle x \in G \st \gr_k(x) \leq c \rangle$; the $\gr_k$ degree of a component of $d_1$ is either $0$ or $-2$. 

Let $\vminus \in G$ denote \emph{Plamenevskaya's cycle}, the distinguished generator at the braidlike resolution with all circles labeled with $v_-$. Hubbard and Saltz define 
\[
    \kappa(\sigmahat) = n + \min\{ c \st [\vminus] = 0 \in H_*(\CF_c(\CC))\}.
\]

We record a few computations for future use:

\begin{itemize}
    \item If $a = \gr_q(x)$ and $b = \gr_{q-k}(x)$, then $\gr_k(x) = a-b$ and $\gr_{0,t}(x) = (1-t)a + tb$. 
    \item Note that $\gr_h(\vminus) = 0$. We have $\gr_q(\vminus) = -n +w$, $\gr_k(\vminus) = -n$, so \[\gr_{r,t}(\vminus) = (1-r)(-n(1-t) + w).\]
\end{itemize}
Note that $\vminus$ is the unique generator with the minimum possible $\gr_k$-grading, namely $\gr_k(\vminus) = -n$, so it indeed is a cycle in $(\CC, d_1)$, and it is the unique cycle with $(\grq, \grqk)$ bigrading $(-n+w, w)$.

Suppose, by way of contradiction, that $\kappa(\sigmahat) = 2$ \emph{and} there exists a time $t_0 \in [0,1/2)$ at which $\mroto(\sigmahat) = (1-r_0)n$.

Since $\kappa(\sigmahat) = 2$, there is some chain $\theta \in \CC$ such that $d_1(\theta) = \vminus \in \CF_{-n}$. Since $d_1$ shifts $\grk$ degree by $0$ or $-2$, $\theta \in \CF_{-n+2}$,  but since $\vminus$ is the unique cycle in $\CF_{-n}$, we know that in fact $\grk(\theta) = -n+2$, and so has homogeneous $(\grh, \grq, \grqk)$ trigrading $(-1,-n+w, w-2)$. 

Since $\mroto(\sigmahat) = (1-r_0)n$, there is a cycle $\xi \subset G$ for which $\grroto(\xi) = \sroto(\sigmahat)$ and $\grk(\xi) = -n$. Hence $\vminus \in \xi$, and is one of the generators in $\xi$ whose gradings determine $\srt$ at $(r_0, t_0)$, i.e.\ 
\[
    \min_{x \in \xi} \{ \grroto(x)\} 
    = \grroto(\vminus) =  (1-r_0)(-n(1-t_0) + w).
\]

At this point, there are a priori two possibilities. Let $\xi' = \xi \setminus \{\vminus\}$.
\begin{enumerate}
    \item There is a generator $x'  \in \xi'$ achieving this minimum, i.e.\ 
    \[ 
        \grroto(x') = r_0(-1) + (1-r_0)(-n+w - t_0\  \grk(x'))
        = (1-r_0)(-n(1-t_0) + w).
    \]
    Then $s_{r_0, t_0}$ can be computed by measuring $x'$ as well. But this forces $\gr_k(x') = -n$, contradicting $x' \notin \CF(-n)$.
   
    \item Otherwise, for all $x' \in \xi'$
    \[\grroto(x') > \grroto(\vminus).\]
    Recall that $\dftot = \sum_{i=1}^\infty d_i + \sum_{i=1}^\infty h_i$. Let $\xi'' = \xi + (\sum_{i=1}^\infty d_i + \sum_{i=1}^\infty h_i) \theta$, so that $[\xi''] = [g(o)] \in H_*(\CC_\ftot)$. Since $\vminus = d_1(\theta)$,
    \[
        \xi'' =\xi' + \left (\sum_{i=2}^\infty d_i + \sum_{i=1}^\infty h_i \right) \theta.
    \]
    From Lemma \ref{lem:tridegree}, one computes that  $\sum_{i=2}^\infty d_i + \sum_{i=1}^\infty h_i$ increases $\grh$ by at least 1, $\grq$ by at least $2$, and $\grqk$  by at least $0$. Since $(\gr_h, \grq, \grqk)(\theta) = (-1, -n + w, w-2)$, 
    \begin{align*}
    \grroto \left(\left (\sum_{i=2}^\infty d_i + \sum_{i=1}^\infty h_i \right) \theta \right) &\geq (1-r_0)( -n(1-t_0) +w  + 2 - 4 t_0),
    \end{align*}
    which is strictly greater than $\grroto(\vminus) = (1-r_0)(-n(1-t_0) + w)$  when $(1-r_0)(2 - 4t_0) > 0$, i.e.\ when $r_0 \in [0,1)$ and $t_0 \in [0,1/2)$, contradicting the assumption that $\gr(\xi) = \grroto[g(o)]$.
\end{enumerate}

\end{proof}
\label{thm:RV}
\end{theorem}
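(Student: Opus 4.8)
The plan is to argue by contraposition, using Hubbard--Saltz's annular invariant $\kappa$ \cite{Hubbard-Saltz-kappa}, whose defining property is that a \emph{non}-right-veering braid $\sigma \in \mathfrak{B}_n$ has $\kappa(\sigmahat) = 2$. Thus it suffices to show that $\kappa(\sigmahat) = 2$ is incompatible with $s_{r_0,t}(\sigmahat)$ attaining its maximal slope at some $t_0 \in [0,\tfrac12)$. The link between $\kappa$ and the \SSS\ complex is Plamenevskaya's cycle $\vminus$, the distinguished generator at the braid-like resolution with every circle labeled $v_-$; I would first record $\grh(\vminus)=0$, $\grq(\vminus)=-n+w$, $\grk(\vminus)=-n$, observe that $-n$ is the minimum $\grk$-value of any generator and that $\vminus$ is the \emph{unique} generator attaining it (hence the unique cycle in the bottom $\grk$-filtration level of $(\CC,d_1)$), and recall from Lemma \ref{lem:tridegree} that $d_1$ shifts $\grk$ by $0$ or $-2$, so that $(\CC,d_1)$ is $\grk$-filtered and $\kappa(\sigmahat) = n + \min\{c : [\vminus]=0 \in H_*(\CF_c(\CC))\}$.

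Next I would unwind the two hypotheses. From $\kappa(\sigmahat)=2$ I obtain a chain $\theta$ with $d_1\theta = \vminus$ and $\grk(\theta)\le -n+2$; since $\vminus$ is the unique cycle at level $-n$ and $d_1$ only lowers $\grk$ by $0$ or $2$, the $\grk$-homogeneous part of $\theta$ at level $-n+2$ already produces $\vminus$, so I may take $\theta$ homogeneous with $(\grh,\grq,\grqk)(\theta) = (-1, -n+w, w-2)$. From the maximal-slope hypothesis, together with the slope computation behind part \ref{thm:slopes} of Theorem \ref{thm:main-thm} (the right-hand slope in $t$ contributed by a generator $x$ to $\grrt$ is $(1-r_0)(-\grk(x))$), I obtain a cycle $\xi$ representing $[g(o_\uparrow)]$ with $\grroto(\xi) = \sroto(\sigmahat)$ that contains a generator of $\grk$-degree $-n$; by the uniqueness above this generator is $\vminus$, and it realizes $\min_{x\in\xi}\grroto(x)$.

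The core is a two-case analysis of which generators of $\xi$ realize $\min_{x\in\xi}\grroto(x)$. Write $\xi' = \xi\setminus\{\vminus\}$. If some $x'\in\xi'$ also realizes this minimum, then, since $\mroto(\sigmahat)$ is already maximal and the right-hand slope contributed by a generator $x$ is $(1-r_0)(-\grk(x))$, a grading comparison forces $\grk(x') = -n$, hence $x' = \vminus$ by uniqueness, contradicting $x'\in\xi\setminus\{\vminus\}$. Otherwise every $x'\in\xi'$ satisfies $\grroto(x') > \grroto(\vminus)$, and I would replace $\xi$ by the homologous cycle $\xi'' = \xi + \dftot(\theta)$; since $\dftot\theta = \vminus + \big(\sum_{i\ge2}d_i + \sum_{i\ge1} h_i\big)(\theta)$ this cancels $\vminus$, leaving $\xi'' = \xi' + \big(\sum_{i\ge2}d_i + \sum_{i\ge1} h_i\big)(\theta)$. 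Using Lemma \ref{lem:tridegree} to bound the tridegree of $\sum_{i\ge2}d_i + \sum h_i$ (it raises $\grh$ by $\ge 1$, $\grq$ by $\ge 2$, $\grqk$ by $\ge 0$) together with the tridegree of $\theta$, I get that this correction term has $\grroto \ge (1-r_0)(-n(1-t_0)+w+2-4t_0)$, which is \emph{strictly} larger than $\grroto(\vminus) = (1-r_0)(-n(1-t_0)+w)$ precisely when $(1-r_0)(2-4t_0)>0$, i.e.\ for $r_0<1$ and $t_0<\tfrac12$. Hence $\grroto(\xi'') > \grroto(\xi) = \sroto(\sigmahat)$, contradicting maximality of $\xi$. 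Either branch yields a contradiction, so $\kappa(\sigmahat)\ne 2$ and $\sigma$ is right-veering.

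I expect the main obstacle to be the tridegree bookkeeping in the final step: pinning down the homogeneous tridegree $(-1,-n+w,w-2)$ of $\theta$, tracking how $\sum_{i\ge2}d_i + \sum h_i$ applied to it shifts $(\grh,\grq,\grqk)$, and verifying that the threshold for strict inequality is exactly $t_0<\tfrac12$ — this is what makes the hypothesis sharp and is where Lemma \ref{lem:tridegree} is used most delicately. A secondary subtlety is the identification of the $\grk=-n$ generator appearing in $\xi$ with Plamenevskaya's cycle, which rests on $\vminus$ being the unique minimal-$\grk$ state of a braid closure carrying its braid-like orientation.
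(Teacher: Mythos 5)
Your argument is essentially identical to the paper's proof: you invoke Hubbard--Saltz's $\kappa$, pin down the tridegree of $\theta$ with $d_1\theta=\vminus$, split into the same two cases on whether another generator of $\xi$ realizes $\min_{x\in\xi}\grroto(x)$, and in the second case correct by $\dftot(\theta)$ and use the tridegree bounds from Lemma \ref{lem:tridegree} to extract the strict inequality governed by $2-4t_0>0$. No substantive differences.
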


\begin{remark}
The upper bound $t < \frac{1}{2}$ in Theorem \ref{thm:RV} is reminiscent of the upper bound $t < 1$ on the slope of $\Upsilon$ in Theorem 1.0.4 of \cite{He}.
\end{remark}

\begin{proposition}
\label{prop:maxslope}
Let $\sigma \in \mathfrak{B}_n$ have writhe $w$. If $m_{r,t_0}(\sigmahat) = (1-r)n$ for some $t_0 \in [0, 1)$, then $m_{r,t} = (1-r)n$ for all $t \in [t_0, 1)$. 
\begin{proof}
Recall that there is a unique distinguished basis vector, Plamenevskaya's cycle $\mathbf{v_-}$, with $k$-grading $-n$. In view of part \ref{thm:slopes} of Theorem \ref{thm:main-thm}, this implies that
\[ \gr_{r, t_0}([g(o)]) =  \gr_{r, t_0}(\vminus) = (1-r)((-n+w) + nt_0).\]
Recall that $s_{r,1}(\sigmahat) = (1-r)w$ by Theorem \ref{thm:writhe} and \[ m_{r,t} \leq (1-r) n\] 
by part \ref{thm:slopes} of Theorem  \ref{thm:main-thm}. Therefore, the slope $m_{r,t} = (1-r)n$ for all $t \in [0,1)$. 
\end{proof}
\end{proposition}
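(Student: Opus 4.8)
The plan is to pin down the function $t \mapsto \srt(\sigmahat)$ exactly on $[t_0,1]$ by playing the maximal-slope hypothesis at $t_0$ against two global constraints that are already available: the lower bound $\srt(\sigmahat) \geq \gr_{r,t}(\vminus) = (1-r)(w - n(1-t))$ from Lemma~\ref{lem:boundsrt}, and the upper bound $m_{r,t}(\sigmahat) \leq (1-r)n$ on the right-slope, from part~\ref{thm:slopes} of Theorem~\ref{thm:main-thm}. We may assume $r < 1$: when $r = 1$ the grading $\gr_{1,t}$ equals $\gr_h$, which is independent of $t$, so $m_{1,t} = 0 = (1-r)n$ for every $t$ and there is nothing to prove.

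The first step is to upgrade the numerical hypothesis $m_{r,t_0}(\sigmahat) = (1-r)n$ to the statement $s_{r,t_0}(\sigmahat) = \gr_{r,t_0}(\vminus)$, i.e.\ that Plamenevskaya's cycle governs $\srt(\sigmahat)$ immediately to the right of $t_0$. Using piecewise-linearity (part~\ref{thm:piecewiselinear} of Theorem~\ref{thm:main-thm}) I would pass to an interval $(t_0, t_0 + \epsilon)$ on which a single representative cycle $y \in [g(o_\uparrow)]$ realizes $\gr_{r,t}(y) = \srt(\sigmahat)$ and, writing $y = \sum_j y_j$ in the distinguished basis, a single index $j_0$ achieves $\gr_{r,t}(y) = \gr_{r,t}(y_{j_0}) = r\,\gr_h(y_{j_0}) + (1-r)(\gr_q(y_{j_0}) - t\,\gr_k(y_{j_0}))$ throughout. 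The right-slope of this affine function is $-(1-r)\gr_k(y_{j_0})$, which equals $(1-r)n$ by hypothesis, so (using $r < 1$) $\gr_k(y_{j_0}) = -n$. Since $\vminus$, the all-$v_-$ labeling of the braid-like resolution, is the unique distinguished generator with $\gr_k = -n$ (cf.\ the proof of Theorem~\ref{thm:RV}), this forces $y_{j_0} = \vminus$, hence $\srt(\sigmahat) = \gr_{r,t}(\vminus) = (1-r)(w - n(1-t))$ for $t \in (t_0, t_0 + \epsilon)$; letting $t \to t_0^+$ and using continuity gives $s_{r,t_0}(\sigmahat) = \gr_{r,t_0}(\vminus)$.

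The second step propagates this equality over all of $[t_0,1]$. Set $\ell(t) = \gr_{r,t}(\vminus) = (1-r)(w - n(1-t))$, a line of slope $(1-r)n$, and note for consistency that $\ell(1) = (1-r)w = s_{r,1}(\sigmahat)$ by Theorem~\ref{thm:writhe}. The function $g(t) = \srt(\sigmahat) - \ell(t)$ is continuous and piecewise-linear, is $\geq 0$ on $[0,1]$ (Lemma~\ref{lem:boundsrt}), has right-slope $m_{r,t}(\sigmahat) - (1-r)n \leq 0$ everywhere on $[0,1)$ (part~\ref{thm:slopes} of Theorem~\ref{thm:main-thm}), and vanishes at $t_0$ by Step~1. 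A continuous piecewise-linear function with everywhere-nonpositive right-slope is non-increasing, so $g(t) \leq g(t_0) = 0$ for all $t \in [t_0,1]$; together with $g \geq 0$ this forces $g \equiv 0$ on $[t_0,1]$. Thus $\srt(\sigmahat) = \ell(t)$ is an honest line of slope $(1-r)n$ on $[t_0,1]$, so $m_{r,t}(\sigmahat) = (1-r)n$ for every $t \in [t_0,1)$.

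I expect the main obstacle to be Step~1: extracting from the bare equality $m_{r,t_0}(\sigmahat) = (1-r)n$ the structural conclusion that $\vminus$ is the distinguished generator controlling $\srt(\sigmahat)$ just to the right of $t_0$. This requires care with the description of $\srt(\sigmahat)$ as a maximum, over representative cycles, of a minimum over their distinguished-basis terms, together with the uniqueness of $\vminus$ among generators of minimal $\gr_k$-grading. Once Step~1 holds, Step~2 is an elementary monotonicity argument for piecewise-linear functions.
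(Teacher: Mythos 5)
Your proof is correct and, while Step~1 coincides with the paper's (you fill in the details behind the terse sentence ``In view of part~(4) of Theorem~\ref{thm:main-thm}, this implies that $\gr_{r,t_0}([g(o)]) = \gr_{r,t_0}(\vminus)$''), your Step~2 takes a genuinely different route. The paper anchors the function $t \mapsto \srt(\sigmahat)$ at \emph{both} $t = t_0$ (via uniqueness of $\vminus$) and $t = 1$ (via Theorem~\ref{thm:writhe}, which gives $s_{r,1}(\sigmahat) = (1-r)w = \gr_{r,1}(\vminus)$), then invokes the slope bound $m_{r,t} \leq (1-r)n$ to squeeze the function onto the chord between these anchors. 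You instead anchor only at $t_0$ and replace the right-endpoint constraint with the global lower bound $\srt(\sigmahat) \geq \gr_{r,t}(\vminus)$ from Lemma~\ref{lem:boundsrt}; the function $g(t) = \srt(\sigmahat) - \gr_{r,t}(\vminus)$ is then nonnegative, vanishes at $t_0$, and has nonpositive right-slope, which forces $g \equiv 0$ on $[t_0,1]$. Your route is marginally more elementary, since Lemma~\ref{lem:boundsrt} is a one-line consequence of the definition of $\srt$ as a maximum over representatives, whereas Theorem~\ref{thm:writhe} requires the separate projection argument onto the braid-like resolution. You also explicitly dispatch the trivial $r = 1$ case, which the paper glosses over (its invocation of part~(4) of Theorem~\ref{thm:main-thm} tacitly assumes $r < 1$). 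One small remark: the paper's final sentence reads ``for all $t \in [0,1)$,'' but the argument (yours and theirs) only pins down the slope on $[t_0,1)$; this appears to be a typo in the paper.
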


We will show that the annular $\srt$ invariants are additive under horizontal composition and behave well under orientation reversal. These properties are also shared with the $d_t$ invariants of \cite{GLW-dt}.
\begin{proposition} 
\label{prop:additive}
Let $(L,o), (L', o') \subset A \times I$ and let $(L,o) \sqcup (L', o') \subset A \times I$ denote their annular composition, as in \cite[Figure 2]{GLW-dt}. Then for all $r \in [0,1]$, $t \in [0,1]$, 
\[\srt((L,o)\sqcup(L',o')) = \srt(L,o) + \srt(L', o') \]
\end{proposition}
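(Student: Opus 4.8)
The plan is to exhibit a filtered chain homotopy equivalence between $\CC_\ftot(L \sqcup L')$ and $\CC_\ftot(L) \otimes \CC_\ftot(L')$ (with the tensor-product filtration given by $\gr_{r,t}(x \otimes y) = \gr_{r,t}(x) + \gr_{r,t}(y)$) that carries the canonical generator $g(o \sqcup o')$ to $g(o) \otimes g(o')$ up to a nonzero scalar, and then to deduce the grading statement from the behavior of $\gr_{r,t}$ under tensor products. The key point is that the annular composition $(L,o) \sqcup (L',o')$ admits a diagram $\D$ which is the horizontal juxtaposition of a diagram $\D$ for $L$ (drawn in an annulus nested around $\BBX$) and a diagram $\D'$ for $L'$ (drawn in a concentric annulus), so that each complete resolution $\D_u$ is literally the disjoint union $\D_v \sqcup \D'_w$ of a resolution of each factor, and the circles of $\D_u$ are the union of the circles of $\D_v$ and of $\D'_w$, each retaining its trivial/nontrivial status (a circle nontrivial in $\D$ or $\D'$ still separates $\BBX$ from $\BBO$ in the composite, and a trivial one stays trivial). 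Consequently $\CC_\ftot(\D) \cong \CC_\ftot(\D) \otimes \CC_\ftot(\D')$ as vector spaces, and all three gradings $\gr_h, \gr_q, \gr_k$ are additive across the tensor factors, hence so is $\gr_{r,t}$; moreover the $\delta_\ftot$ differential respects this tensor decomposition because every resolution configuration contributing to $d_i$ or $h_i$ is supported in a disk, which — after an isotopy — lies entirely in the $\D$-part or entirely in the $\D'$-part (a configuration cannot straddle the separating equatorial curve between the two annuli). This identifies $\CC_\ftot(\D) \simeq \CC_\ftot(\D) \otimes \CC_\ftot(\D')$ as $\R$-filtered complexes.

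Granting this, the argument runs as follows. First, by Proposition 5.4 of \cite{SSS} (cited in the excerpt), $H_*(\CC_\ftot(L))$ and $H_*(\CC_\ftot(L'))$ are free with canonical generators indexed by orientations, so by the Künneth theorem over the field $\F_2$ we get $H_*(\CC_\ftot(L \sqcup L')) \cong H_*(\CC_\ftot(L)) \otimes H_*(\CC_\ftot(L'))$, and under this isomorphism $[g(o \sqcup o')]$ corresponds to $[g(o)] \otimes [g(o')]$ — this follows from the local "checkerboard labelling" description of $g(\cdot)$ recalled before Definition \ref{def:cat-filt}'s neighborhood (i.e.\ the $x_i$ / $1+x_i$ rule), since the oriented resolution of the composite is the disjoint union of the oriented resolutions of the factors, with the same black/white colouring on each factor's circles. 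Second, I claim $\grrt([g(o)] \otimes [g(o')]) = \grrt([g(o)]) + \grrt([g(o')])$. The inequality "$\geq$" is clear: if $x \in [g(o)]$ and $x' \in [g(o')]$ achieve their respective maxima, then $x \otimes x'$ is a cycle representing the class and has $\grrt(x \otimes x') = \grrt(x) + \grrt(x')$. For "$\leq$", take any representative cycle $z$ of $[g(o)] \otimes [g(o')]$ achieving the maximum; write $z$ in the tensor basis and use that $\grrt$ of a sum of distinguished generators is the minimum of their $\grrt$-values together with the fact (provable by the standard "leading-term" argument, as in the proof of Theorem \ref{thm:main-thm}(\ref{thm:slopes})) that one may replace $z$ by a cycle of the form $x \otimes x'$ with $x, x'$ cycles in the respective factors without decreasing $\grrt$ — concretely, project to the top filtration piece and use that its homology is the tensor product of the top homologies. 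Combining the two displays gives $\srt((L,o) \sqcup (L',o')) = \srt(L,o) + \srt(L',o')$.

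The main obstacle I expect is the "$\leq$" direction of the grading additivity — equivalently, verifying that the filtered homotopy type genuinely splits as a tensor product of filtered complexes, not merely that the homology splits. The subtlety is that a general cycle representing $[g(o)]\otimes[g(o')]$ need not be a pure tensor, so one must argue that passing to an associated graded (or to the subcomplex $\CF_a$ for the critical value $a = \srt(L,o)+\srt(L',o')$) still sees the tensor structure; this is exactly where the observation that $\delta_\ftot$ respects the tensor decomposition, and hence so does each filtration level $\CF_a = \bigoplus_{a_1 + a_2 \geq a}\CF_{a_1}(L)\otimes\CF_{a_2}(L')$, does the work. Once that structural fact is in place the rest is the usual Künneth bookkeeping. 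I would also remark that additivity of $\gr_h, \gr_q, \gr_k$ individually is immediate from their definitions in Section \ref{subsec:Khovanov-homology} (each is a sum of local contributions over circles, plus the crossing corrections $n_\pm$ which add), so the only analytic content is the homotopy-theoretic splitting above; the formula $\gr_{r,t} = r\,\gr_h + (1-r)(\gr_q - t\,\gr_k)$ then inherits additivity term by term.
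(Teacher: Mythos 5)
Your plan has one genuine gap, and it is exactly the point the paper's proof spends most of its effort on. You claim that under the K\"unneth identification $[g(o \sqcup o')]$ corresponds to $[g(o)] \otimes [g(o')]$, ``since the oriented resolution of the composite is the disjoint union of the oriented resolutions of the factors, with the same black/white colouring on each factor's circles.'' The second half of that sentence is false in general. Annular composition nests one diagram inside the other: one factor is drawn in an inner annulus around $\BBX$, the other in an outer concentric annulus. The checkerboard colour of the region containing the inner factor is then determined by the parity of the number of circles of the outer factor's oriented resolution that one crosses to reach it from the unbounded region near $\BBO$; and that parity equals the parity of the outer factor's wrapping number. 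So if the outer factor has odd wrapping number, the colouring flips on the inner factor, and the canonical generator of the composite restricts on the inner circles to the \emph{flipped} labelling, i.e.\ to the $g(-o)$-labels rather than the $g(o)$-labels. If at least one of $L$, $L'$ has even wrapping number you can rescue your claim by placing that factor on the outside, giving $g(o \sqcup o') = g(o)\otimes g(o')$; but if both wrapping numbers are odd the best one gets from the two possible nestings is $g(o \sqcup o') = g(o) \otimes g(-o') = g(-o) \otimes g(o')$, and one must separately show $\srt(L, o) = \srt(L, -o)$ before concluding. The paper's proof does exactly this: it splits into the two parity cases, compares the two nested diagrams to get both tensor expressions, and specializes $L'$ to $\idclosure_1$ (where $\srt(\idclosure_1, o') = \srt(\idclosure_1, -o')$ by direct computation) to force $\srt(L,o)=\srt(L,-o)$ for odd wrapping number, after which the desired equality follows.

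Your other concern, that the filtered tensor-product decomposition must hold at the chain level so that $\grrt$ of a K\"unneth class is additive, is legitimate and in fact treated with less ceremony in the paper, which simply asserts that $\CC_{tot}(L \sqcup L') = \CC_{tot}(L) \otimes \CC_{tot}(L')$ as $(\Z \oplus \Z \oplus \Z)$-filtered complexes. Your observations that each resolution configuration lives entirely within one tensor factor, that $\gr_h$, $\gr_q$, $\gr_k$ are each additive across the factors, and that a leading-term argument handles the ``$\leq$'' direction of $\grrt$ additivity, are the right way to flesh that out. But that is not where the difficulty lies; the parity/orientation subtlety above is the missing idea in your proposal.
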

\begin{proof}
 As $(\Z \oplus \Z \oplus \Z)$-filtered complexes, 
\[ \CC_{tot}( L\sqcup L') = \CC_{tot}(L) \otimes \CC_{tot}(L').\]

Suppose first that the wrapping numbers of at least one of $L$ or $L'$ is even. Then $g(o \sqcup o') = g(o) \otimes g(o')$, and the statement of the proposition follows.

In the case that the wrapping numbers of $L$ and $L'$ are both odd, we have that 
\[g(o \sqcup o') = g(o) \otimes g(-o') = g(-o) \otimes g(o').\]
The two tensor product expressions on the right are achieved by considering the two possible annular diagrams for $L \sqcup L'$. (Begin with annular diagrams for the individual links, and then identify the inner boundary of one annulus with the outer boundary of the other, or vice versa.) Thus, we have
\begin{align*}
\srt((L,o)\sqcup(L',o')) = \srt(L,o) + \srt(L', -o') = \srt(L,-o) + \srt(L', o').
\end{align*}
In particular, when $L' = \idclosure_1$ is the trivial $1$-braid closure, an easy computation for both orientations $o'$ and $-o'$ on $\idclosure_1$ shows that $\srt(\idclosure_1, o') = \srt(\idclosure_1, -o')$. Thus, we have $\srt(L,o) = \srt(L,-o)$ for any link with odd wrapping number. Therefore, 
\[\srt((L,o)\sqcup(L',o')) = \srt(L,o) + \srt(L', o') \]
as desired. 
\end{proof}

\begin{proposition}
\label{prop:orientationreverse}
Let $(L,o) \subset A \times I$ be an oriented annular link. Then for all $r \in [0,1]$, $t \in [0,1]$, 
\[\srt(L,o) = \srt(L,-o)\]
\end{proposition}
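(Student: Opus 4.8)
The plan is to deduce this from Proposition~\ref{prop:additive}, whose proof already isolates the statement in the case of odd wrapping number. So the first step is to record that base case: if $\omega(L)$ is odd, then $\srt(L,o)=\srt(L,-o)$ — this is precisely the last assertion in the proof of Proposition~\ref{prop:additive} (it falls out of the two ways of writing $g(o\sqcup o')$ when both wrapping numbers are odd, combined with $\srt(\idclosure_1,o')=\srt(\idclosure_1,-o')$). If $L$ itself has odd wrapping number, we are already done.

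Next I would reduce the even case to the odd case by stabilizing. Given $L$ with $\omega(L)$ even, fix any orientation $o''$ on the trivial $1$-braid closure $\idclosure_1$ and form the annular composition $(L\sqcup\idclosure_1,\,o\sqcup o'')$. The point is that $\omega(L\sqcup\idclosure_1)$ is odd: the wrapping number of an annular link is congruent mod $2$ to the winding number (the algebraic intersection number with a meridian disk) of any orientation, the winding number is additive under annular composition, and $\idclosure_1$ contributes $\pm 1$. Applying the base case to $L\sqcup\idclosure_1$ therefore gives $\srt(L\sqcup\idclosure_1,\,o\sqcup o'')=\srt(L\sqcup\idclosure_1,\,-(o\sqcup o''))$.

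Finally I would expand both sides using the additivity of Proposition~\ref{prop:additive}. Since orientation reversal is componentwise, $-(o\sqcup o'')=(-o)\sqcup(-o'')$, and so the identity above becomes $\srt(L,o)+\srt(\idclosure_1,o'')=\srt(L,-o)+\srt(\idclosure_1,-o'')$. As $\idclosure_1$ has odd wrapping number (equivalently, by the direct computation already in the text), $\srt(\idclosure_1,o'')=\srt(\idclosure_1,-o'')$; cancelling these terms yields $\srt(L,o)=\srt(L,-o)$.

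The main obstacle is minor and purely bookkeeping: one must check that $L\sqcup\idclosure_1$ really does have odd wrapping number, i.e.\ that the \emph{parity} of the wrapping number is additive under annular composition. Note that exact additivity of the wrapping number is not needed — only the parity statement, which follows from comparing geometric and algebraic intersection numbers with a meridian disk. One should also take slight care that "$-o$" denotes componentwise reversal, so that it is compatible with annular composition; this compatibility is exactly what lets the additivity of $\srt$ interact cleanly with orientation reversal.
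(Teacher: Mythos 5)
Your argument is correct, and it is close to the paper's in spirit --- both proofs hinge on composing with $\idclosure_1$, applying Proposition \ref{prop:additive}, and cancelling the $\idclosure_1$ terms via the direct computation $\srt(\idclosure_1,o'')=\srt(\idclosure_1,-o'')$ --- but the way you produce the second expression differs. The paper never splits into parity cases: it computes $\srt\bigl((L,o)\sqcup(\idclosure_1,o')\bigr)$ twice, once from the diagram with $\idclosure_1$ innermost and once from the diagram with $\idclosure_1$ outermost; in the second diagram the checkerboard coloring of the $L$-part of the oriented resolution is flipped, so the canonical generator decomposes with $g(-o)$ in the $L$-factor, giving $\srt(L,-o)+\srt(\idclosure_1,o')$ directly for arbitrary $L$. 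You instead treat additivity as a black box, cite the odd-wrapping-number case of orientation reversal, which is only an intermediate claim inside the proof of Proposition \ref{prop:additive} rather than a stated result, and reduce the even case to it via the (correct) observation that the wrapping number mod two equals the winding number mod two and is therefore additive under annular composition, so that $L\sqcup\idclosure_1$ has odd wrapping number. Both routes are valid and of comparable length; yours is more modular but requires promoting that intermediate claim to a citable lemma and supplying the parity bookkeeping, whereas the paper's proof re-uses the inner/outer diagram mechanism once more and needs no case analysis on $\omega(L)$.
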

\begin{proof}
By Proposition \ref{prop:additive}, we have
\begin{align*}
\srt((L,o)\sqcup(\idclosure_1,o')) = \srt(L,o) + \srt(\idclosure_1, o').
\end{align*}
By considering the annular diagram for $(L,o)\sqcup(\idclosure_1,o')$ where we identify the outer annulus of the diagram for $(L,o)$ with the inner annulus for $(\idclosure_1, o')$, we have 
\begin{align*}
\srt((L,o)\sqcup(\idclosure_1,o')) = \srt(L,-o) + \srt(\idclosure_1, o').
\end{align*}
The statement follows.
\end{proof}

We study the behavior of a braid under Markov stabilizations. 
\begin{proposition}
\label{prop:stab}
Let $\sigma \in \mathfrak{B}_n$ and suppose $\sigma^\pm \in \mathfrak{B}_{n+1}$ is obtained from $\sigma$ by either a positive or negative Markov stabilization. Then for all $r \in [0,1]$, $t \in [0,1]$,
\[ \srt(\sigmahat) - (1-r)t  \leq \srt(\sigmahat^\pm)  \leq  \srt(\sigmahat) + (1-r)t  \]
\end{proposition}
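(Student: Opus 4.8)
The plan is to realize a Markov stabilization (positive or negative) as a single 1-handle attachment (an annular saddle) connecting $\sigmahat^\pm$ to $\sigmahat \sqcup \idclosure_1$, and then to invoke the cobordism inequality from Theorem~\ref{thm:main-thm}\ref{cobordismproperty} together with the additivity of $\srt$ from Proposition~\ref{prop:additive} and the identity-braid computation $\srt(\idclosure_1, o') = 0$. Concretely: a Markov (de)stabilization move replaces a strand carrying a single crossing $\sigma_n^{\pm 1}$ with the unknotted closure of that strand; diagrammatically, resolving that crossing in one of its two ways yields $\sigmahat \sqcup \idclosure_1$, so there is an oriented annular cobordism $F$ from $\sigmahat^\pm$ to $\sigmahat \sqcup \idclosure_1$ (and its reverse $-F$) consisting of exactly one annular saddle, with $a_0 = b_0 = 0$ and $a_1 = 1$. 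First I would verify this cobordism respects the braid-like orientations on both ends and that every component of $F$ has boundary on both $\sigmahat^\pm$ and $\sigmahat \sqcup \idclosure_1$ — this is where the orientation of the crossing ($\sigma_n$ vs.\ $\sigma_n^{-1}$) and the choice of resolution at the saddle must be matched so the saddle is orientable and connects the correct orientation classes. This is the step I expect to be the main obstacle, since one must be careful that the braid-like orientation $o_\uparrow$ on $\sigmahat^\pm$ restricts to $o_\uparrow$ on $\sigmahat$ and to the (unique, orientation-independent by the Example) orientation on $\idclosure_1$, and check that $\phi_F^*$ sends the canonical generator to the canonical generator.

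Granting that, the argument is short. By Theorem~\ref{thm:main-thm}\ref{cobordismproperty} applied to $F$ (with $a_0 = b_0 = 0$, $a_1 = 1$), since each component of $F$ meets both ends we get
\[
    \left| \srt(\sigmahat^\pm) - \srt(\sigmahat \sqcup \idclosure_1) \right| \leq (r-1)(0 - 1 + 0) = 1 - r.
\]
Wait — I need the correct sign: $(r-1)(-a_1) = (r-1)(-1) = 1-r \geq 0$, so this reads $|\srt(\sigmahat^\pm) - \srt(\sigmahat \sqcup \idclosure_1)| \leq 1-r$. Hmm, but the claimed bound has $(1-r)t$, not $1-r$. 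So a naive single-saddle estimate is too weak by a factor of $t$, and I must instead track the $\gr_k$-grading more carefully through the specific saddle map rather than using the coarse bound from Proposition~\ref{prop:cob-filt-degrees}.

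The refined plan: work directly with the saddle cobordism map $f \colon \CC_\ftot(\sigmahat^\pm) \to \CC_\ftot(\sigmahat \sqcup \idclosure_1)$ and its "reverse" $f'$ going the other way, and examine their $\gr_{r,t}$-filtered degrees on the relevant generators. The key point is that the extra circle created or destroyed in a Markov stabilization is a \emph{nontrivial} (wrapping) circle — it goes around the axis once — so the birth/death and saddle components interact with the $\gr_k$-grading in a way that improves on the generic estimate. Specifically, for the positive stabilization one should exhibit a representative of $[g(o_\uparrow)]$ for $\sigmahat^+$ whose image under $f$ is a representative of $[g(o_\uparrow)]$ for $\sigmahat$ (using $\srt(\idclosure_1) = 0$ and Proposition~\ref{prop:additive}), and compute the exact $\gr_{r,t}$-shift: I expect the saddle together with the nontrivial circle contributes $\gr_q$-shift $-1$ and a $\gr_k$-shift of $\mp 1$ depending on stabilization sign, giving a net $\gr_{r,t}$-shift of $\pm(1-r)(-1 + t)$ or similar, which after resolving absolute values in both directions (via $f$ and $f'$, as in the proof of Lemma~\ref{lem:tangle-equiv}) yields exactly $|\srt(\sigmahat^\pm) - \srt(\sigmahat)| \leq (1-r)t$. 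So the main obstacle is the careful bookkeeping of the $(\gr_h, \gr_q, \gr_k)$-tridegree of the Markov stabilization cobordism map on the canonical generators, using the nontriviality of the new circle to sharpen the generic bound from Proposition~\ref{prop:cob-filt-degrees} by exactly the factor $t$.
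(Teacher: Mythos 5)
Your opening computation goes off the rails only because of a wrong constant, not a wrong idea. The paper's Example gives $\srt(\idclosure_n,o) = -n(1-r)(1-t)$, so $\srt(\idclosure_1,o') = -(1-r)(1-t)$, not $0$. With the correct value, your ``naive'' single annular saddle from $\sigmahat^\pm$ to $\sigmahat \sqcup \idclosure_1$ (filtered $\grrt$-degree $-(1-r)$ by Proposition~\ref{prop:cob-filt-degrees}) combined with additivity (Proposition~\ref{prop:additive}) gives exactly
\[
\srt(\sigmahat^\pm) \;\leq\; \srt(\sigmahat \sqcup \idclosure_1) + (1-r) \;=\; \srt(\sigmahat) - (1-r)(1-t) + (1-r) \;=\; \srt(\sigmahat) + (1-r)t,
\]
which is the desired upper bound; there is no mismatch and no need to ``sharpen by a factor of $t$.'' This is precisely the paper's argument for that direction, so you abandoned a correct half of the proof because of an arithmetic slip.

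The genuine gap is the lower bound, and your refined plan does not repair it. Reversing the same saddle (your $f'$, with the two-sided estimate as in Lemma~\ref{lem:tangle-equiv}) only yields $\srt(\sigmahat^\pm) \geq \srt(\sigmahat\sqcup\idclosure_1) - (1-r) = \srt(\sigmahat) - (1-r)(2-t)$, strictly weaker than the claimed $\srt(\sigmahat) - (1-r)t$ whenever $t<1$, $r<1$. To fix this with a single saddle you would need the reverse saddle map to be filtered of degree $(1-r)(1-2t)$, which is \emph{positive} for $t<\tfrac12$; but the saddle map contains a Khovanov-type ($d_{1,0}$) component of $\grrt$-degree exactly $-(1-r)$, and since the maximal-grading representative of $[g(o_\uparrow\sqcup o')]$ is an uncontrolled cycle, no bookkeeping of its $\grk$-content improves the functorial estimate beyond Proposition~\ref{prop:cob-filt-degrees}. (Also, your posited $\grk$-shift of $\mp 1$ for the $d$-part contradicts Lemma~\ref{lem:tridegree}, which only allows even shifts there.) The paper obtains the lower bound from a \emph{different} cobordism: $F\colon \sigmahat \to \sigmahat^\pm$ consisting of one non-annular birth (the born circle is the nontrivial circle that becomes the new strand, degree $(1-r)(1-t)$) followed by one saddle (degree $-(1-r)$), so $\phi_F$ is filtered of degree $-(1-r)t$ and, since every component of $F$ meets $\sigmahat$, part \ref{cobordismproperty} of Theorem~\ref{thm:main-thm} (or the direct argument with $\phi_F[g(o_\uparrow)]=[g(o_\uparrow^\pm)]$) gives $\srt(\sigmahat) - (1-r)t \leq \srt(\sigmahat^\pm)$. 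Your instinct that the nontriviality of the new circle supplies the $t$-dependence is correct, but it enters through the birth in this two-step cobordism, not through a sharpened saddle estimate.
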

\begin{proof}
Consider the oriented annular cobordism from $\sigmahat^\pm $ to $\sigmahat \sqcup \idclosure_1$ (the horizontal composition of $\sigmahat$ with the trivial $1$-braid closure) with a single odd-index critical point that resolve the extra $\pm$ crossing. The associated chain map on the \SSS\ complex sends $g(\sigmahat^\pm, o_\uparrow)$ to $g(\sigmahat \sqcup \idclosure_1, o_\uparrow)$. 
By Proposition \ref{prop:cob-filt-degrees}, this map is filtered of degree $(1-r)(-1)$. Therefore,
\[ \srt(\sigmahat^\pm) - (1-r) \leq \srt(\sigmahat \sqcup \idclosure_1).\]
Using additivity of $\srt$ under horizontal composition (cf. Proposition \ref{prop:additive}) and Theorem \ref{thm:qp}, we have
\[ \srt(\sigmahat \sqcup \idclosure_1) = \srt(\sigmahat) - (1-r)(1-t) \]
which gives us one of our two desired inequalities:
\[ \srt(\sigmahat^\pm)  \leq  \srt(\sigmahat) + (1-r)t.  \]

For the other inequality, consider the cobordism $F: \sigmahat \to \sigmahat^\pm$ consisting of one nonannular birth and one saddle. Then the associated filtered chain map $\phi_F$ increases $\grrt$ by at least $-t(1-r)$ by Proposition \ref{prop:cob-filt-degrees}. Hence $\srt(\sigmahat) -t(1-r) \leq \srt(\sigmahat^\pm)$.

\end{proof}

\begin{proposition}
Let $\sigma \in \mathfrak{B}_n$ and suppose that $\sigma^+ \in \mathfrak{B}_{n+1}$ is obtained from $\sigma$ by performing a positive stabilization. For $t < 1$ and sufficiently close to $1$, 
\[m_{r,t}(\sigmahat^+) - m_{r,t}(\sigmahat) \geq 1.\]
\end{proposition}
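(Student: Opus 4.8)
The plan is to combine three facts: the \emph{exact} endpoint value of $s_{r,\cdot}$ at $t=1$ from Theorem~\ref{thm:writhe}, the piecewise-linearity in $t$ (part~\ref{thm:piecewiselinear} of Theorem~\ref{thm:main-thm}), and the upper Markov-stabilization bound of Proposition~\ref{prop:stab}. A positive Markov stabilization adds one positive crossing, so it raises the writhe by $1$; hence Theorem~\ref{thm:writhe} gives $s_{r,1}(\sigmahat)=(1-r)w$ and $s_{r,1}(\sigmahat^+)=(1-r)(w+1)$.

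First I would fix $r\in[0,1)$ and choose $t_1<1$ lying past the last breakpoint of both piecewise-linear functions $t\mapsto s_{r,t}(\sigmahat)$ and $t\mapsto s_{r,t}(\sigmahat^+)$, so that each is linear on $[t_1,1]$. On $[t_1,1)$ their right-hand slopes are then the constants $\mu:=m_{r,t}(\sigmahat)$ and $\mu^+:=m_{r,t}(\sigmahat^+)$, and anchoring each final linear segment at its value at $t=1$ gives, for $t\in[t_1,1)$,
\[
s_{r,t}(\sigmahat)=(1-r)w-\mu(1-t),\qquad
s_{r,t}(\sigmahat^+)=(1-r)(w+1)-\mu^+(1-t),
\]
so that $s_{r,t}(\sigmahat^+)-s_{r,t}(\sigmahat)=(1-r)-(\mu^+-\mu)(1-t)$.

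Next I would substitute this into the upper bound $s_{r,t}(\sigmahat^+)\le s_{r,t}(\sigmahat)+(1-r)t$ of Proposition~\ref{prop:stab}. This reads $(1-r)-(\mu^+-\mu)(1-t)\le(1-r)t$, i.e.\ $(1-r)(1-t)\le(\mu^+-\mu)(1-t)$; dividing by $1-t>0$ yields $\mu^+-\mu\ge 1-r$. Therefore $m_{r,t}(\sigmahat^+)-m_{r,t}(\sigmahat)\ge 1-r$ for all $t<1$ sufficiently close to $1$, which is the claimed inequality under the $(1-r)$-normalization of slopes used throughout this section (equivalently, the normalized integer slope of part~\ref{thm:slopes} of Theorem~\ref{thm:main-thm} jumps by at least one unit across the stabilization).

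I do not anticipate a real obstacle: the argument is a short assembly of results already established. The point that must be used carefully is that we genuinely need the \emph{exact} value $s_{r,1}(\sigmahat^+)=(1-r)(w+1)$ from Theorem~\ref{thm:writhe}, and not merely the Bennequin-type lower bound of Lemma~\ref{lem:boundsrt}: it is this exact value that fixes the height of the last linear segment, so that once Proposition~\ref{prop:stab} is applied the slope is forced up. The only bookkeeping to watch is that $t_1$ must be chosen beyond the last breakpoint of \emph{both} functions, so that $\mu$ and $\mu^+$ are simultaneously constant on $[t_1,1)$.
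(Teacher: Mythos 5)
Your argument reproduces the paper's proof almost exactly: anchor the final linear segments at $t=1$ using Theorem~\ref{thm:writhe}, invoke piecewise-linearity, and feed the result into the upper bound of Proposition~\ref{prop:stab} to force the slope up. The one point of divergence is the final step: your computation gives $m_{r,t}(\sigmahat^+)-m_{r,t}(\sigmahat)\geq 1-r$, and you (correctly) interpret the proposition's ``$\geq 1$'' as referring to the $(1-r)$-normalized integer slopes of part~\ref{thm:slopes} of Theorem~\ref{thm:main-thm}; the paper instead appeals to a parity observation (the normalized slopes of $\sigmahat$ and $\sigmahat^+$ lie in integer ranges of opposite parity, since the braid indices are $n$ and $n+1$) to ``improve'' $1-r$ to $1$, but that parity statement lives at the level of normalized slopes and does not actually upgrade the un-normalized bound beyond $1-r$ when $r>0$, so your normalization reading is the cleaner resolution.
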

\begin{proof}
Suppose $\sigma$ has writhe $w$, then $\sigma^+$ has writhe $w+1$. We fix $r$ throughout this proof. By Theorem \ref{thm:writhe}, we have that $s_{r,1}(\sigmahat) = (1-r)w$ and $s_{r,1}(\sigmahat^+) = (1-r)(w+1)$. For the purposes of this proof, let $m = m_{r,t}(\sigmahat)$ and $m^+ = m_{r,t}(\sigmahat^+)$ denote the \emph{right}-hand slope of $\srt$ for $t<1$ and sufficiently close to $1$. Since the functions $\srt(\sigmahat)$ and $\srt(\sigmahat^+)$ are piecewise linear with respect to $t$, for $t < 1$ and sufficiently close to 1, we have that \begin{align*}
\srt(\sigmahat) &= m t + (1-r)w - m
\\
\srt(\sigmahat^+) &= m^+ t + (1-r)(w+1) - m^+
\end{align*}
By Proposition \ref{prop:stab}, we have that 
\begin{align*}
\srt(\sigmahat^+) - \srt(\sigmahat) &\leq (1-r)t \\
(m^+ - m)(t-1) &\leq (1-r)(t-1)
\end{align*} 
Thus, $m^+ - m \geq (1-r)$. Since $m^+-m$ are integers of different parity mod $2$ by part \ref{thm:slopes} of Theorem \ref{thm:main-thm}, the lower bound can be improved to $1$. 
\end{proof}

Inspired by the sets introduced in \cite[Remark 15]{GLW-dt}, we study the following set of braids whose $\srt(\sigmahat)$ invariants attain the maximal slope for some $t_0 \in [0, 1)$, and therefore, by Proposition \ref{prop:maxslope} for all $t \in [t_0,1)$. 

\begin{definition}
Let $t_0 \in [0,1)$. Define
\[\Mt' = \{ \text{Braids } \sigma \mid m_{r,t} = (1-r) n \text{ for all } t \in [t_0, 1) \text{ and all } r \in [0,1] \}. \]
\end{definition}

The following two lemmas show that membership in $\Mt'$ is preserved under positive stabilization. 


\begin{lemma}
Let $\sigma \in \mathfrak{B}_n$ have writhe $w$, and suppose $\sigma' \in \mathfrak{B}_n$ is obtained from $\sigma$ by inserting a single positive crossing. Then if $\sigma \in \Mt'$ for some $t_0 \in [0,1)$, then $\sigma' \in \Mt'$. 
\end{lemma}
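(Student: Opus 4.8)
The strategy is to relate $\sigma$ and $\sigma'$ by an oriented cobordism and then combine the slope bounds from Theorem~\ref{thm:main-thm}\ref{thm:slopes} with Proposition~\ref{prop:maxslope}. First I would observe that since $\sigma' \in \mathfrak{B}_n$ is obtained from $\sigma$ by inserting one positive crossing, its writhe is $w+1$, and there is an oriented annular cobordism $F \colon \sigmahat' \to \sigmahat$ consisting of a single saddle that resolves the inserted positive crossing (resolving it the ``oriented'' way). Every component of $F$ has a boundary component on $\sigmahat'$, and since resolving a crossing in a braid closure the oriented way yields again a braid closure with the same number of strands, every component also has a boundary on $\sigmahat$. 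Hence Theorem~\ref{thm:main-thm}\ref{cobordismproperty} applies with $a_0 = b_0 = 0$ and $a_1 = 1$, giving
\[
 \vert \srt(\sigmahat') - \srt(\sigmahat) \vert \leq (1-r)
\]
for all $r,t \in [0,1]$.

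Next I would pin down the endpoint values. By Theorem~\ref{thm:writhe}, $s_{r,1}(\sigmahat) = (1-r)w$ and $s_{r,1}(\sigmahat') = (1-r)(w+1)$. Since $\sigma \in \Mt'$, the slope $m_{r,t}(\sigmahat) = (1-r)n$ for all $t \in [t_0,1)$, and because $s_{r,t}(\sigmahat)$ is piecewise linear in $t$ with the unique representative $\vminus$ achieving the grading on $[t_0,1)$ (as in the proof of Proposition~\ref{prop:maxslope}), we get $s_{r,t_0}(\sigmahat) = (1-r)((-n+w) + nt_0)$, and more generally $s_{r,t}(\sigmahat) = (1-r)(w - n(1-t))$ for all $t \in [t_0,1]$. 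Now I would use the cobordism inequality together with the general slope bound $m_{r,t}(\sigmahat') \leq (1-r)n$ from Theorem~\ref{thm:main-thm}\ref{thm:slopes}: for $t \in [t_0,1)$,
\[
 s_{r,t}(\sigmahat') \geq s_{r,1}(\sigmahat') - (1-r)n(1-t) = (1-r)(w+1) - (1-r)n(1-t) = s_{r,t}(\sigmahat) + (1-r),
\]
where the first inequality is the slope bound integrated from $t$ to $1$. On the other hand the cobordism inequality gives $s_{r,t}(\sigmahat') \leq s_{r,t}(\sigmahat) + (1-r)$. Hence $s_{r,t}(\sigmahat') = s_{r,t}(\sigmahat) + (1-r) = (1-r)(w+1 - n(1-t))$ for all $t \in [t_0,1]$.

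Finally, this explicit formula shows $s_{r,t}(\sigmahat')$ is linear in $t$ on $[t_0,1)$ with slope exactly $(1-r)n$, so $m_{r,t}(\sigmahat') = (1-r)n$ there for every $r \in [0,1]$; that is, $\sigma' \in \Mt'$. The main obstacle I anticipate is the verification that the oriented-resolution cobordism genuinely satisfies the ``component hits both ends'' hypothesis needed to invoke the two-sided inequality in Theorem~\ref{thm:main-thm}\ref{cobordismproperty} — equivalently, that no closed component is created or destroyed when resolving the extra positive crossing in the braid closure — and the bookkeeping that the slope-$n$ behaviour persists all the way to $t=1$ rather than only on a subinterval; both are handled by the rigidity coming from $\vminus$ being the unique minimal-$\gr_k$ generator and from Theorem~\ref{thm:writhe} nailing the value at $t=1$.
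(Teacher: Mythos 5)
Your proof is correct, and its skeleton matches the paper's: combine an upper bound on $s_{r,t}(\sigmahat')$ coming from the saddle cobordism $\sigmahat' \to \sigmahat$ with a matching lower bound, and conclude the slope is exactly $(1-r)n$ on $[t_0,1)$.

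The one genuine difference is in how you produce the lower bound on $s_{r,t}(\sigmahat')$. The paper evaluates $\gr_{r,t}(g(\sigmahat'))$ directly (this is Lemma~\ref{lem:boundsrt}, the ``canonical generator'' bound), obtaining $(1-r)((-n+w+1)+nt) \leq \srt(\sigmahat')$ in one line. You instead start from the value $s_{r,1}(\sigmahat') = (1-r)(w+1)$ supplied by Theorem~\ref{thm:writhe}, and integrate the slope bound $m_{r,t} \leq (1-r)n$ from Theorem~\ref{thm:main-thm}\ref{thm:slopes} backward from $t=1$. This is a slightly longer route that ultimately rests on the same computation (the grading of $\vminus$) but makes the role of piecewise linearity explicit; the paper's version is more direct and doesn't need Theorem~\ref{thm:writhe} at all.

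One small remark: your worry about whether the two-sided cobordism inequality applies is unnecessary. You only ever use the one-sided inequality $s_{r,t}(\sigmahat') \leq s_{r,t}(\sigmahat) + (1-r)$, which requires only that every component of the cobordism has boundary on the source $\sigmahat'$; this is automatic for a single saddle inserted into a braid closure, since the saddle component together with any product cylinders all meet $\sigmahat'$.
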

\begin{proof}
Since $\sigma \in \Mt'$, we know that for each $t \in [t_0, 1)$, we have 
\[\srt(\sigmahat) = \grrt(\vminus(\sigmahat)) =  (1-r)((-n+w) + nt).\]
Note that $\sigma'$ has writhe $w+1$. By applying part \ref{thm:cobordism} of Theorem \ref{thm:main-thm} to the annular saddle cobordism $\sigmahat' \to \sigmahat$ that resolves the single extra positive crossing tells us
\[\srt(\sigmahat') - \srt(\sigmahat) \leq (1-r),\]
and hence,
\[\srt(\sigmahat')  \leq (1-r)((-n+w+1) + nt).\]
On the other hand, 
\[ (1-r)((-n+w+1) + nt) = \grrt(g(\sigmahat')) \leq \srt(\sigmahat').\]
Hence $m_{r,t}(\sigmahat') = (1-r)n$ for all $t \in [t_0, 1)$ as desired. 
\end{proof}

\begin{lemma}
Let $t_0 \in [0,1)$. Let $\sigma \in \mathfrak{B}_n$ and $\sigma' \in \mathfrak{B}_{n'}$, and let $\sigma \sqcup \sigma' \in \mathfrak{B}_{n+n'}$ denote their horizontal composition. If $\sigma, \sigma' \in \Mt',$ then $\sigma \sqcup \sigmahat' \in \Mt'.$
\end{lemma}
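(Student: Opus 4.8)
The plan is to deduce the statement from additivity of $\srt$ under annular composition (Proposition \ref{prop:additive}) together with the slope bound in part \ref{thm:slopes} of Theorem \ref{thm:main-thm}. First I would note that horizontal composition of braids corresponds, after taking closures, to annular composition of annular links: $\widehat{\sigma \sqcup \sigma'} = \widehat{\sigma} \sqcup \widehat{\sigma'}$ as oriented annular links, and the braid-like orientation $o_\uparrow$ on $\widehat{\sigma \sqcup \sigma'}$ restricts to the braid-like orientations on $\widehat\sigma$ and $\widehat{\sigma'}$. Proposition \ref{prop:additive} then yields
\[ \srt(\widehat{\sigma \sqcup \sigma'}) = \srt(\widehat\sigma) + \srt(\widehat{\sigma'}) \]
for all $r, t \in [0,1]$.

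Next I would fix $r$; the case $r = 1$ is immediate since $\gr_{1,t} = \gr_h$ is independent of $t$, so assume $r \in [0,1)$. By part \ref{thm:piecewiselinear} of Theorem \ref{thm:main-thm}, each of $\srt(\widehat\sigma)$ and $\srt(\widehat{\sigma'})$ is continuous and piecewise-linear in $t$. Since $\sigma \in \Mt'$, its right-hand slope $m_{r,t}(\widehat\sigma)$ is identically $(1-r)n$ on $[t_0,1)$, and a continuous piecewise-linear function with constant one-sided slope on an interval is affine there; hence $\srt(\widehat\sigma) = s_{r,t_0}(\widehat\sigma) + (1-r)n(t - t_0)$ for $t \in [t_0,1)$, and similarly $\srt(\widehat{\sigma'}) = s_{r,t_0}(\widehat{\sigma'}) + (1-r)n'(t - t_0)$. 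Adding these via the displayed identity shows $\srt(\widehat{\sigma \sqcup \sigma'})$ is affine with slope $(1-r)(n+n')$ on $[t_0,1)$, i.e.\ $m_{r,t}(\widehat{\sigma \sqcup \sigma'}) = (1-r)(n+n')$ for all $t \in [t_0,1)$.

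Finally I would observe that the wrapping number of $\widehat{\sigma \sqcup \sigma'}$ is at most $n + n'$ (an arc from $\BBX$ to $\BBO$ in the standard annular closure diagram meets each of the $n+n'$ strands once), so part \ref{thm:slopes} of Theorem \ref{thm:main-thm} gives $m_{r,t}(\widehat{\sigma \sqcup \sigma'}) \le (1-r)(n+n')$; thus the slope computed above is maximal. Since $r \in [0,1)$ was arbitrary and $r = 1$ is automatic, $\sigma \sqcup \sigma' \in \Mt'$. The argument is essentially bookkeeping, and I do not expect a genuine obstacle: the only point requiring slight care is the elementary fact that having right-hand slope $(1-r)n$ at \emph{every} point of $[t_0,1)$ (rather than merely at $t_0$) upgrades piecewise-linearity to honest affineness on $[t_0,1)$ — which is precisely what membership in $\Mt'$ delivers — together with the mild observation that the braid-like orientations are compatible under horizontal composition, so that Proposition \ref{prop:additive} applies with the orientations we want.
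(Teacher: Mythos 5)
Your argument is correct and is essentially the paper's own (one-line) proof, which simply invokes Proposition~\ref{prop:additive}: since right-hand slopes of functions are additive, additivity of $\srt$ under horizontal composition immediately gives $m_{r,t}(\widehat{\sigma \sqcup \sigma'}) = m_{r,t}(\widehat\sigma) + m_{r,t}(\widehat{\sigma'}) = (1-r)(n+n')$ for all $t \in [t_0,1)$ and $r \in [0,1]$, which is precisely the defining condition for $\Mt'$-membership for a braid of index $n+n'$. The detour through affineness and the closing comparison with the wrapping-number bound are fine sanity checks but not logically needed, as the definition of $\Mt'$ asks for an equality of slopes rather than a maximality assertion.
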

\begin{proof}
This follows immediately from Proposition \ref{prop:additive}.
\end{proof}

The behavior of the $\srt$ invariant under positive \emph{destabilization} is currently unknown. As described in the introduction, a complete understanding of the behavior of $\srt$ under positive stabilization and destabilization could potentially yield a new transverse invariant. This invariant may or may not be \emph{effective}, or contain more information than the self-linking number. The question of whether Plamenevskaya's transverse link invariant $\psi$ \cite{Plamenevskaya-transverse} is effective remains open. It is shown in \cite[Theorem 1.2]{Baldwin-Plamenevskaya} that for braids representing Khovanov thin knot types, the vanishing of the transverse link invariant $\psi$ only depends on the $s$-invariant and the self-linking number. Recently, Martin \cite{Martin-dt} used $d_t$ to show that for 3-braid closures, the vanishing of Plamenvskaya's transverse link invariant $\psi$ depends only on the $s$-invariant and and the self-linking number. In this direction, it is natural to ask whether the $\srt$ invariant of a braid closure only depends on $s_{\F_2}$ and the self-linking number.

In another direction, we produce a lower bound on the \emph{band rank} $\rk_n$ of a braid \cite{Rudolph-band-rank}. Given $\beta \in \mathfrak{B}_n$, 
    \[
        \rk_n(\beta):= \min \left \{ c \in \Z^{\geq 0} \ \bigg \vert \  \beta = \prod_{j=1}^c \omega_j \sigma_{i_j}^\pm (\omega_j)^{-1} \text{ for some } \omega_j \in \mathfrak{B}_n \right \},
    \]
    where $\sigma_{i_j}$ denotes the elementary Artin generators. Note that band rank $\rk_n$ is a braid conjugacy class invariant. 
    Topologically, band rank $\rk_n(\beta)$ is the minimum number of half-twist bands (running perpendicularly to the strands) needed to construct a Seifert surface for $\bclosure$ from $n$ disks (the obvious Seifert surface for the identity braid closure in $\mathfrak{B}_n$). 

\begin{proposition}
Let $r \neq 1$. 
Given an oriented cobordism $F$ from $(L,o)$ to $(L',o')$ with $a_0$ annular even index critical points, $a_1$ annular odd index critical points, and $b_0$ non-annular even index critical points, we have
\[
 \left | \frac{s_{r,t}(\hat\beta)}{1-r} + n(1-t) \right | \leq rk_n(\beta).
 \]
 
\begin{proof} 
Recall $\mathbb{1}_n$ denotes the identity braid in $B_n$.
Observe that $s_{r,t}(\idclosure_n)  
= \gr_{r,t}(v_- \otimes v_- \otimes \cdots \otimes v_-)
= (1-r)(-n + tn) = -n(1-r)(1-t).$

Let $F$ be the cobordism from $\bclosure$ to $\idclosure_n$ realizing the band rank of $\bclosure$, i.e.\ $a_1 = \rk_n(\beta)$.  Then by Proposition \ref{prop:cob-filt-degrees},
\[
s_{r,t}(\bclosure) - s_{r,t}(\idclosure_n)
= s_{r,t}(\bclosure) + n(1-r)(1-t)
\leq (1-r) \rk_n(\beta).
\]
Since $-F$ is a cobordism from $\idclosure_n$ to $\bclosure$ with the same $a_1$, we have  $| s_{r,t}(\bclosure) - s_{r,t}(\idclosure_n) | \leq (1-r) \rk_n(\beta)$. Finally, we divide both sides of the inequality by $1-r > 0$.
\end{proof}
\end{proposition}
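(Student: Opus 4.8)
Since the displayed inequality only involves the braid $\beta$, the opening clause about a general cobordism $F$ introduces data that does not appear in the conclusion, and I would simply disregard it. Assume $r\in[0,1)$, so that $1-r>0$. The plan is to produce one explicit, well-behaved cobordism from $\bclosure$ to $\idclosure_n$ and feed it, together with the base computation for $\idclosure_n$, into the cobordism bound of Theorem~\ref{thm:main-thm}(\ref{cobordismproperty}). First I record, exactly as in the identity-braid example above, that
\[
 s_{r,t}(\idclosure_n)=\gr_{r,t}(v_-\otimes\cdots\otimes v_-)=(1-r)(-n+tn)=-n(1-r)(1-t);
\]
since $\srt$ is an annular link invariant (Theorem~\ref{thm:main-thm}(\ref{thm:annlinkinvt})), this value may be used no matter which diagram of $\idclosure_n$ arises below.

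Next, fix a band presentation $\beta=\prod_{j=1}^{c}\omega_j\sigma_{i_j}^{\pm}\omega_j^{-1}$ with $c=\rk_n(\beta)$, and draw the associated braid diagram of $\bclosure$ with its braid-like orientation, so that each of the $c$ conjugated Artin generators $\sigma_{i_j}^{\pm}$ appears as a single ``band crossing.'' Performing the oriented resolution at each of these $c$ crossings, one at a time, replaces the $j$-th factor $\omega_j\sigma_{i_j}^{\pm}\omega_j^{-1}$ by $\omega_j\mathbb{1}\omega_j^{-1}=\mathbb{1}$, so after all $c$ resolutions the end diagram is the closure of a braid word equal to $\mathbb{1}_n$, i.e.\ an annular diagram of $\idclosure_n$; along the way we have performed exactly $c$ oriented saddle moves and no births or deaths. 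This yields an oriented cobordism $F\colon(\bclosure,o_\uparrow)\to(\idclosure_n,o_\uparrow)$ with $a_0=b_0=0$ and $a_1=c=\rk_n(\beta)$, every saddle being annular (by transversality all saddles are annular).

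To apply Theorem~\ref{thm:main-thm}(\ref{cobordismproperty}) in its two-sided form I need every component of $F$ to meet both $\bclosure$ and $\idclosure_n$, and this is the one point that requires a genuine argument. Restricting the time coordinate to a component $C$ of $F$ gives a Morse function with only index-$1$ critical points, so along $C$ the number of circles in a transverse level set changes only at saddles; since $C$ carries no index-$0$ critical point, a level set of $C$ cannot become nonempty as time increases, hence $C$ meets the bottom $\bclosure\times\{0\}$, and dually (no index-$2$ critical point) $C$ meets the top $\idclosure_n\times\{1\}$. (Equivalently, this guarantees that the induced chain map carries $g(o_\uparrow)$ to a nonzero multiple of $g(o_\uparrow)$, as in the proof of Theorem~\ref{thm:main-thm} via BN-2 of Proposition~5.3 of~\cite{SSS}.) With this in hand, Theorem~\ref{thm:main-thm}(\ref{cobordismproperty}) applied to $F$ (and to $-F$, whose index-$1$ critical points are again $c$ annular saddles) gives
\[
 \bigl| s_{r,t}(\bclosure) - s_{r,t}(\idclosure_n) \bigr| \leq (1-r)\,\rk_n(\beta),
\]
and substituting $s_{r,t}(\idclosure_n)=-n(1-r)(1-t)$ and dividing by $1-r>0$ yields $\bigl|\tfrac{s_{r,t}(\bclosure)}{1-r}+n(1-t)\bigr|\leq \rk_n(\beta)$. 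I expect the only real obstacle to be the component condition just discussed; the remainder is bookkeeping with the cobordism-map filtration degrees of Proposition~\ref{prop:cob-filt-degrees}.
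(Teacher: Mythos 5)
Your proof is correct and follows the same route as the paper's: compute $s_{r,t}(\idclosure_n)$, take the cobordism from $\bclosure$ to $\idclosure_n$ realizing the band rank (so $a_0=b_0=0$, $a_1=\rk_n(\beta)$), and apply the cobordism bound of Theorem~\ref{thm:main-thm}(\ref{cobordismproperty}) in both directions before dividing by $1-r$. You are also right that the opening clause of the proposition statement (``Given an oriented cobordism $F$ \ldots'') is extraneous data that plays no role in the conclusion; it appears to be a leftover from an earlier draft.

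The one place you go beyond the paper's proof is in explicitly checking that the band cobordism satisfies the hypothesis of Theorem~\ref{thm:main-thm}(\ref{cobordismproperty}) that every component of $F$ has boundary in both $L$ and $L'$. Your Morse-theoretic argument is correct: a connected component with no index-$0$ (resp.\ index-$2$) interior critical point must have boundary on the bottom (resp.\ top), since otherwise the restricted time function would attain an interior minimum (resp.\ maximum). The paper elides this verification, so your version is actually a bit more complete. The only slight imprecision is the phrase ``a level set of $C$ cannot become nonempty as time increases'' --- what you mean is that with no index-$0$ critical points the image of the time function on $C$ must reach $0$, and with no index-$2$ points it must reach $1$; your parenthetical restatement in terms of the Morse extrema is the cleaner formulation.
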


\section{Functoriality}
\label{sec:functoriality}

\subsection{Strong functoriality}
In \cite{Saltz}, Saltz defines \emph{strong Khovanov-Floer theories} and proves that any \emph{conic} strong Khovanov-Floer theory is \emph{functorial}.

To motivate the definition of a strong Khovanov-Floer theory, we briefly discuss what functoriality should mean, and how we will use it.

An assignment $\CK: \catDiag \to \catFilt$ (see Definition \ref{def:cat-filt}) is a functor if the following conditions are met. 
First of all, $\CK$ must give a well-defined map on objects. So, if $\CK$ describes how to map a link diagram  $\D$ to a filtered chain complex $\CC$, it must map all equivalent link diagrams to the chain homotopy equivalence class of $\CC$ as well. 
Furthermore, oftentimes $\CK$ will be described using some auxiliary information attached to $\D$; we much also check that ultimately, the assignment $\CK(\D)$ did not depend on this auxiliary information.
Next, $\CK$ must also give a well-defined map on morphisms. Again, it must send two equivalent diagrammatic cobordisms to homotopic maps between filtered complexes. This assignment must also not depend on any auxiliary information used to describe the assignment.

The above ensures that the assignment $\CK$ is a functor. In the special case when $\CK$ resembles the Khovanov homology functor, some of the above functorial properties will follow automatically. Such a functor $\CK$ behaves as a categorification of some skein relations, and should satisfy:
\begin{itemize}
    \item $\CK$ agrees with $\Kh$ on the unknot diagram with no crossings. In terms of morphisms, this should have the structure of a Frobenius algebra with multiplication and comultiplication maps.
    \item $\CK$ sends disjoint unions to tensor products.
    \item $\CK$ is built from a cube of resolutions picture. This property is called conicity. 
\end{itemize}
 
These special structural constraints help prove the functoriality of $\CK$. For example, if $\CK$ is \emph{conic} (in Saltz's vocabulary, because it is built with mapping cones), $\CK$ is essentially a lift of a functor from the cube category, and these are very nice diagrams to understand. For example, handleswap invariance follows from the fact that faces commute in many of these cubical complexes. 

Once such a functor $\CK$ is established, we can consider well-defined homology classes in $\CK(L)$ as link invariants, and study their properties under cobordism. In particular, we may study the filtration gradings of some distinguished homology classes as in this paper.

With all this in mind, below is Saltz's definition of a strong Khovanov-Floer theory. If it seems abstract, we recommend cross-referencing with the proof of Proposition \ref{prop:sss-funct} in Section \ref{subsec:sss-functoriality} for the concrete application needed for our results.

\begin{definition}[Definition 3.8, \cite{Saltz}]
A \emph{strong Khovanov-Floer} theory is an assignment
\[
	\CK: \{\text{Link diagrams with auxiliary data}\} \to \{\text{filtered chain complexes}\} 
\]
satisfying the following conditions:

\begin{enumerate}
\item[(S-1)] 
For two collections $A_\alpha$ and $A_\beta$ of auxiliary data associated to a diagram $\D$, there is a chain homotopy equivalence
  \[
		a^\beta_\alpha: \CK(\D, A_\alpha) \to \CK(\D, A_\beta)
	\]
such that the collections $\{\CK(\D, A_\alpha)\}$ together with the maps $\{a_\alpha^\beta\}$ form a transitive system in $\catFilt$. Let $\CK(\D)$ denote the inverse limit, or the \emph{canonical representative} for this diagram $\D$. 
\item [(S-2)]
For a crossingless diagram of the unknot $\D$, we have $H_{tot}(\CK(\D)) \cong \Kh(\D)$.
\item [(S-3)] 
For a disjoint union of diagrams $\D \cup \D'$, we have a chain homotopy equivalence $\CK(\D \cup \D') \simeq \CK(\D) \otimes_{\F} \CK(\D')$.
\item [(S-4)] If $\D'$ is the result of a diagrammatic handle attachment to $\D$, 
    \begin{itemize}
        \item there is a function $\phi: \Aux(\D) \to \Aux(\D')$,
        \item there is a map $h_{A_\alpha, \phi(A_\alpha), B}: \CK(\D, A_\alpha) \to \CK(\D', \phi(A_\alpha))$ where $B$ is additional auxiliary data for the handle attachment,
        \item for fixed $B$, these maps extend to maps on transitive systems, therefore defining a map $h_B: \CK(\D) \to \CK(\D')$ on canonical representatives, 
        \item and for any two sets of additional auxiliary data $B$ and $B'$, we have $h_B \simeq h_{B'}$.
    \end{itemize}
\item [(S-5)] For $U$ a crossingless diagram of an unknot, $\CK(U)$ is a Frobenius algebra $\CA$ with the operations $\iota: \F \to \CA, \epsilon: \CA \to \F, m: \CA \otimes \CA \to \CA, $ and $\Delta: \CA \to \CA \otimes \CA$ given by diagrammatic birth, death, merge, and split, respectively.
\item [(S-6)] If $\D'$ is the result of a planar isotopy on $\D$, then $\CK(\D) \simeq \CK(\D')$.
\item [(S-7)] If $\D = \D_0 \sqcup \D_1$, $\D' = \D'_0 \sqcup \D'_1$, and $\Sigma$ is a diagrammatic cobordism $\D \to \D'$ that is a disjoint union of cobordisms $\Sigma_0: \D_0 \to \D'_0$ and $\Sigma_1: \D_1 \to \D'_1$, then $\CK(\Sigma) \simeq \CK(\Sigma_0) \otimes \CK(\Sigma_1)$.
\item [(S-8)] The handle attachment maps satisfy handleswap invariance and movie move 15, up to homotopy.
\end{enumerate}

\end{definition}

\begin{definition}[Definition 3.9, \cite{Saltz}]
Let $c$ be some crossing in a link diagram $\D$. For $i = 0,1$, let $\D_i$ be the diagram where $c$ is replaced by its $i$-resolution. Let $\gamma$ be the arc in $\D_0$ along which a 1-handle is attached to produce the diagram $\D_1$. 
A strong Khovanov-Floer theory $\CK$ is \emph{conic} if 
$\CK(\D) \simeq \mathrm{cone}(h_\gamma: \CK(\D_0) \to \CK(\D_1))$
where $h_\gamma$ is the handle attachment map.
\end{definition}

\begin{theorem}[\cite{Saltz}]
Conic, strong Khovanov-Floer theories are functorial. In other words, $\CK$ gives rise to a functor $\catDiag \to \Kom(\Vect)$, the homotopy category of complexes over $\F_2$.
\end{theorem}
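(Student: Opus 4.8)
The plan is to follow Saltz's argument \cite{Saltz}, which proceeds in two stages: first one shows $\CK$ is well-defined on objects of $\catDiag$ (invariance under planar isotopy, auxiliary data, and the Reidemeister moves, up to chain homotopy equivalence), and then one shows $\CK$ is well-defined on morphisms by verifying the Carter--Saito movie moves up to homotopy. The structural hypothesis \emph{conicity} is what makes both stages tractable: it presents $\CK(\D)$ as an iterated mapping cone over the cube of resolutions of $\D$, so that most verifications reduce to commutativity of faces in cubical diagrams, exactly as in the classical treatment of Khovanov homology.

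For invariance on objects: axiom (S-1) handles independence of the auxiliary data by passing to the inverse limit $\CK(\D)$ over the transitive system, and (S-6) gives planar isotopy invariance. For a Reidemeister move relating diagrams $\D$ and $\D'$, one uses conicity to write $\CK(\D) \simeq \cone(h_\gamma\colon \CK(\D_0) \to \CK(\D_1))$, identifies, using the Frobenius algebra structure of (S-5) together with (S-3), a summand of the relevant component of the differential that is an isomorphism, and cancels it by Gauss elimination; since Gauss elimination of an acyclic summand is a filtered chain homotopy equivalence, this yields $\CK(\D) \simeq \CK(\D')$. Iterating this and reducing both sides to a common smaller complex handles the Reidemeister III move.

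For invariance on morphisms: any diagrammatic cobordism decomposes as a composite of Reidemeister moves, whose induced maps are the chain homotopy equivalences constructed above, and diagrammatic handle attachments, whose induced maps are the maps $h_B$ of axiom (S-4), well-defined up to homotopy by the last clause of (S-4). By the Carter--Saito theorem it then suffices to check that each movie move is respected up to homotopy. The movie moves involving only Reidemeister moves reduce to naturality of the transitive-system equivalences; the moves mixing Reidemeister moves with handle attachments follow from conicity (commuting faces of the resolution cube) together with (S-3) and (S-7), which govern disjoint-union behavior; and handleswap invariance and movie move 15 are postulated outright in (S-8). Functoriality itself (composites to composites, identities to identities) is then formal.

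The main obstacle is the bookkeeping in the second stage: one must organize the full list of Carter--Saito moves, isolate precisely which primitive structural input each one requires, and confirm that conicity together with (S-1)--(S-8) supplies exactly those inputs — in particular that the handle maps interact correctly with the cone decompositions and with the Frobenius operations. This case analysis is the content of Saltz's proof, and we refer the reader to \cite{Saltz} for the details.
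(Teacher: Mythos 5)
The paper does not prove this theorem: it is stated with the citation \cite{Saltz} and no proof environment, and is used only as a black box to conclude functoriality of the \SSS\ theory once Proposition~\ref{prop:sss-funct} verifies the axioms (S-1)--(S-8) and conicity. Your proposal is therefore not really competing with an in-paper argument; it is a plausible high-level reconstruction of Saltz's external proof (reduce object-invariance to cancellation in the cube of resolutions via conicity and the Frobenius structure, reduce morphism-invariance to the Carter--Saito movie moves, with (S-8) supplying handleswap and movie move~15), which you then defer back to \cite{Saltz} for the actual case analysis. That is the same thing the paper does, just with more words. If you wanted to add value beyond the citation, the place to do it would be the verification that a given theory satisfies the axioms --- which is exactly what the paper's Proposition~\ref{prop:sss-funct} carries out for $\CC_{\ftot}$ --- rather than re-sketching Saltz's general argument.
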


\begin{remark}
Saltz dubbed this functoriality property ``strong" to differentiate from the functoriality of Khovanov-Floer spectral sequences described by Baldwin-Hedden-Lobb in \cite{BHL-funct}. The key difference is that even though under some finiteness conditions $E^\infty$ is isomorphic to the total homology of a filtered complex, the map induced between $E^\infty$ pages does not always agree with the map induced on homology.
\end{remark}

\subsection{Functoriality of the \SSS\ complex}
\label{subsec:sss-functoriality}

In \cite[Theorem 6.9]{Saltz}, Saltz proves that Szab\'o's geometric spectral sequence is a functorial link invariant. Here we prove that the \SSS\ complex is a functorial link invariant. 

\begin{proposition}
\label{prop:sss-funct}
The \SSS\ theory $\CK(\D) = (\CC_{ftot}, \partial_{ftot})$ is a conic, strong Khovanov-Floer theory, and is therefore functorial. 

\begin{proof}

First of all, conicity follows by \SSS's definition of saddle maps \cite[Definition 5.2]{SSS}.

\begin{enumerate}
    \item [(S-1)] 

	\setlength\tempfigdim{0.08\textheight}
      \begin{figure}
    \[
    \xymatrix@C=0.1\textwidth{
      \vcenter{
      \hbox{\includegraphics[height=\tempfigdim]{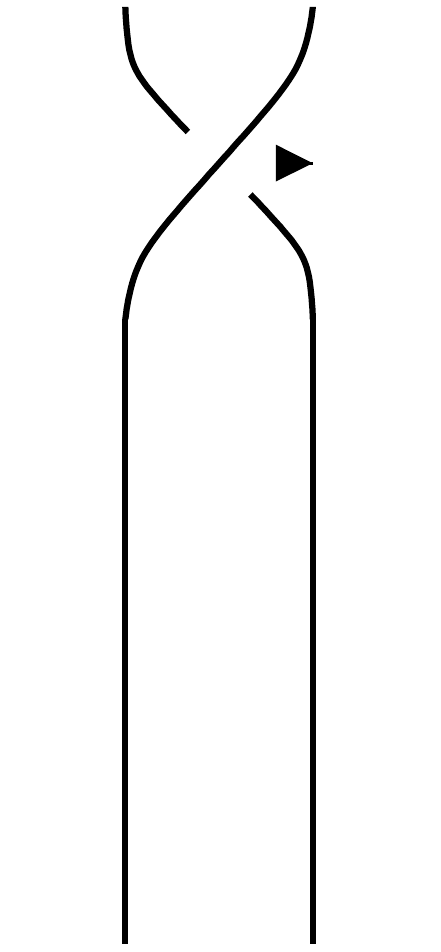}}
      \txt{($\D,  A_-$)} 
      } \ar@{->}[r] &
      \vcenter{\hbox{
      \includegraphics[height=\tempfigdim]{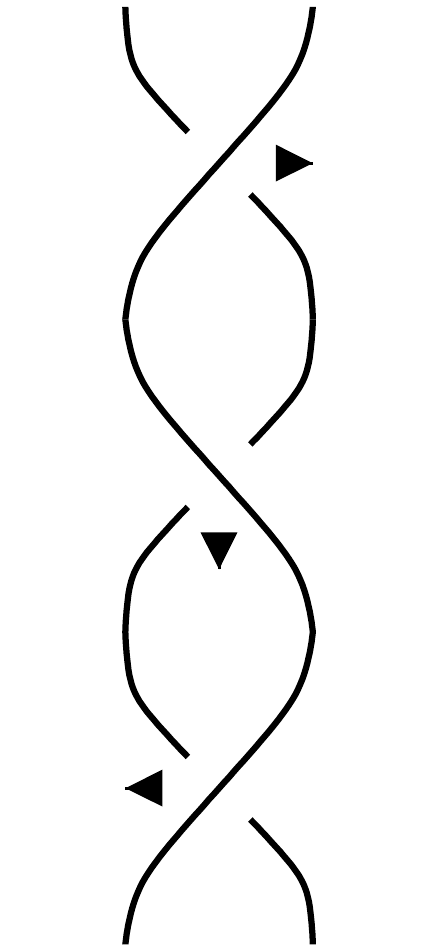}}
      \txt{($\tilde\D,  A_\pm$)} 
      }\ar@{->}[r] &
      \vcenter{\hbox{\includegraphics[height=\tempfigdim]{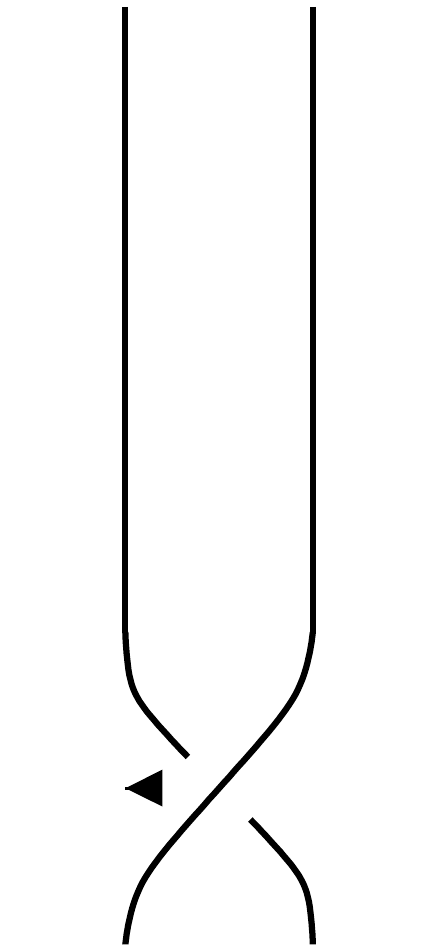}}
      \txt{($\D,  A_+$)}}
    }
    \]
    \caption{Changing the decoration at a single crossing.}
     \label{fig:change-decor}
  \end{figure}
    
    Suppose for a diagram $\D$ two sets of decorations $A_{-}$ and $A_{+}$ differ at exactly one crossing $c$; here we think of $-$ and $+$ as the two ways to decorate the crossing $c$. We form a diagram $\tilde\D$ differing from $\D$ in a small disk around $c$ that replaces $c$ with three crossings, by way of a Reidemeister II move, as in Figure~\ref{fig:change-decor}. We associate a decoration $A_{\pm}$ to $\tilde\D$ as follows: the two outer crossings are labeled with opposite decoration choices, labeled $-$ and $+$, and we choose an arbitrary decoration for the middle crossing. By \cite[Corollary 4.3]{SSS}, the complex $\CK(\tilde \D, A_{\pm})$ is chain homotopy equivalent to each of $\CK( \D, A_{-})$ and $\CK(\D, A_{+})$; \SSS\ use the cancellation lemma to show that these chain isomorphisms are cancellations of acyclic \emph{direct} summands. (A change of basis allows the complex to split.) The chain homotopy equivalence $a_-^+:  \CK( \D, A_{-}) \to \CK(\D, A_{+})$ is the composition
    \[a_-^+ = \widetilde{a}_\pm^+ \circ \widetilde{b}_-^\pm, \]
where $\widetilde{a}_\pm^-: \CK(\tilde \D, A_\pm) \to \CK( \D, A_-) $ and $\widetilde{a}_\pm^+: \CK(\tilde \D, A_\pm) \to \CK(\D, A_+)$ are the \SSS\ Reidemeister II chain homotopy equivalences described in Proposition 4.7 of \cite{SSS}, and $\widetilde{b}_-^\pm : \CK( \D, A_-)  \to \CK(\tilde \D, A_\pm) $ is the chain map induced by the identification of $( \D, A_-)$ with the $``01"$ partial resolution of $(\tilde \D, A_\pm)$ and satisfies $\widetilde{a}_\pm^- \circ \widetilde{b}_-^\pm  \simeq \text{id}$.

    Now for decorations $A_\alpha$ and $A_\beta$ on $\D$ differing at multiple crossings, define the map relating their complexes 
    \[ a_\alpha^\beta : \CK(\D, A_\alpha) \to \CK(\D, A_\beta)\]
    as a composition of the chain homotopy equivalences of single crossing changes. If this is well-defined (i.e.\ does not depend on the order of crossings we choose to compose the chain homotopy equivalences), then these maps form a transitive system.

Thus, without loss of generality, we may assume that the decorations $A_\alpha = A_{-, -}$ and $A_\beta = A_{+, +}$ differ at exactly two crossings, $c_1$ and $c_2$.  Let $A_{\eta'} = A_{+, -}$ and $A_\eta = A_{-, +}$ be the two decorations for $\D$ that differ from $A_\alpha$ by exactly one decoration change, at either $c_1$ or $c_2$, respectively. 
To show that the system of crossing change maps is transitive, we need to check that 
\[ a_{\eta'}^\beta \circ a_\alpha^{\eta'}  \simeq a_\eta^\beta \circ a_\alpha^\eta. \]

By decoration invariance of the \SSS\ complex, $\CK(\tilde{\tilde \D}, A_{\pm, \pm})$ is chain homotopy equivalent to each chain complex $\CK(\D, A_\alpha)$, $\CK(\D, A_\beta)$, $\CK(\D, A_\eta)$, and $\CK(\D, A_{\eta'})$. By symmetry, it suffices to show that the box below commutes up to homotopy (the diagram names are omitted but should be clear from the decoration notation):
    \begin{center}
    \begin{tikzcd}
    \CK(A_{\pm, \pm}) \arrow{r}{\widetilde{\widetilde{a_2}}_\pm^- } \arrow{d}{\widetilde{\widetilde{a_1}}_\pm^-} 
        & \CK(A_{\pm, -}) \arrow{d}{\widetilde{a_1}_\pm^-} \\
    \CK(A_{-, \pm}) \arrow{r}{\widetilde{a_2}_\pm^- } 
        & \CK(A_{-,-}) 
    \end{tikzcd}
    \end{center}
where the chain isomorphism subscripts, $1$ or $2$, indicate the crossing, $c_1$ or $c_2$, associated to the Reidemeister II chain map. A detailed description of each composition requires drawing the 4-dimensional cube of $2^4$ vertices in the partial cube of resolutions corresponding to the four crossings we wish to eliminate, and performing cancellations according to the usual proof (described in Proposition 4.7 of \cite{SSS}), twice, as illustrated in Figure \ref{fig:Reid2-cube}. Ultimately, we find that both chain homotopy equivalences identify $\CK(A_{-, -})$ with the complex $\CK(A_{\pm, \pm})_{(1,0,1,0)}$, situated at the vertex $(1,0,1,0)$ of this 4-dimensional cube.

\setlength\tempfigdim{0.05\textheight}
\begin{figure}
  \[
  \xymatrix{
  \vcenter{\hbox{\includegraphics[height=\tempfigdim]{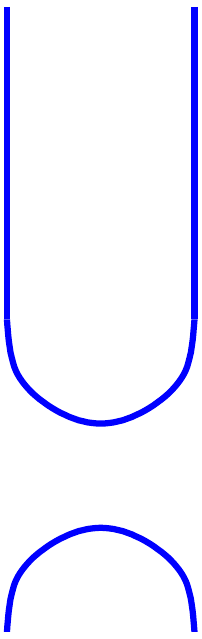}}}
  \quad
    \vcenter{\hbox{\includegraphics[height=\tempfigdim]{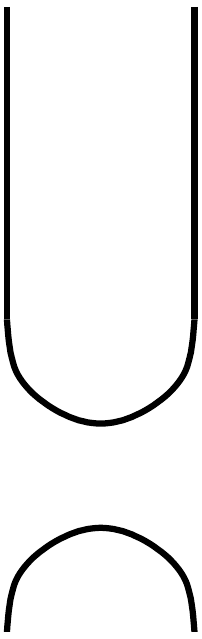}}} \ar[r]\ar[d]
    &
    \vcenter{\hbox{\includegraphics[height=\tempfigdim]{reider2-00_blue}}}
     \quad
    \vcenter{\hbox{\includegraphics[height=\tempfigdim]{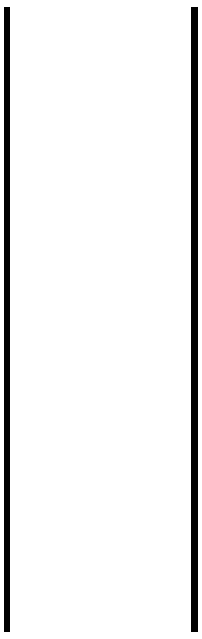}}} \ar[d]
    &&&
    \vcenter{\hbox{\includegraphics[height=\tempfigdim]{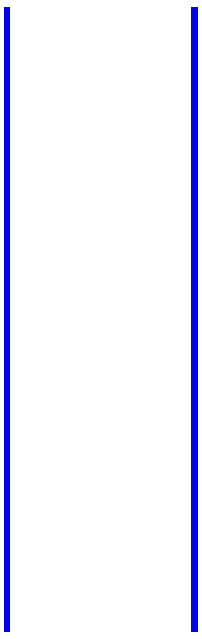}}}
    \quad
    \vcenter{\hbox{\includegraphics[height=\tempfigdim]{reider2-00}}}\ar[r]\ar@{=>}@[violet][d]
    &
    \vcenter{\hbox{\includegraphics[height=\tempfigdim]{reider2-01_blue}}}
    \quad
    \vcenter{\hbox{\includegraphics[height=\tempfigdim]{reider2-01}}}\ar[d]
    \\ 
    \vcenter{\hbox{\includegraphics[height=\tempfigdim]{reider2-00_blue}}}
    \quad
    \vcenter{\hbox{\includegraphics[height=\tempfigdim]{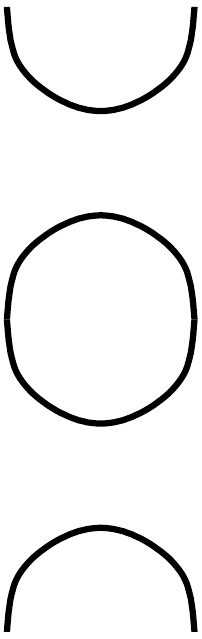}}}\ar[r]
    &
    \vcenter{\hbox{\includegraphics[height=\tempfigdim]{reider2-00_blue}}}
    \quad
    \vcenter{\hbox{\includegraphics[height=\tempfigdim]{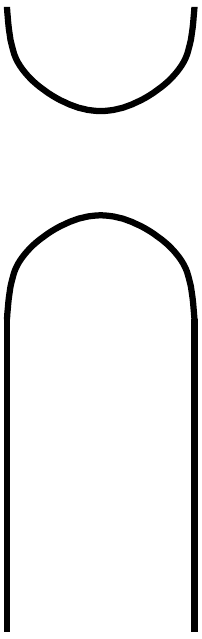}}}
    & \ar@[orange][r] & &
    \vcenter{\hbox{\includegraphics[height=\tempfigdim]{reider2-01_blue}}}
    \quad
    \vcenter{\hbox{\includegraphics[height=\tempfigdim]{reider2-10}}}\ar@{=>}@[violet][r]
    & 
    \vcenter{\hbox{\includegraphics[height=\tempfigdim]{reider2-01_blue}}}
    \quad
    \vcenter{\hbox{\includegraphics[height=\tempfigdim]{reider2-11}}}
    \\ 
    & \ar@{=>}@[orange][d] &  & &  \ar@[orange][d]  &
    \\
    & & & & &
    \\ 
        \vcenter{\hbox{\includegraphics[height=\tempfigdim]{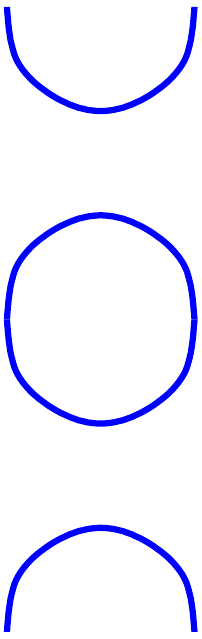}}}
        \quad
        \vcenter{\hbox{\includegraphics[height=\tempfigdim]{reider2-00}}}\ar[r]\ar[d]
    &
    \vcenter{\hbox{\includegraphics[height=\tempfigdim]{reider2-10_blue}}}
        \quad
    \vcenter{\hbox{\includegraphics[height=\tempfigdim]{reider2-01}}}\ar[d]
    &\ar@{=>}@[orange][r] &&
    \vcenter{\hbox{\includegraphics[height=\tempfigdim]{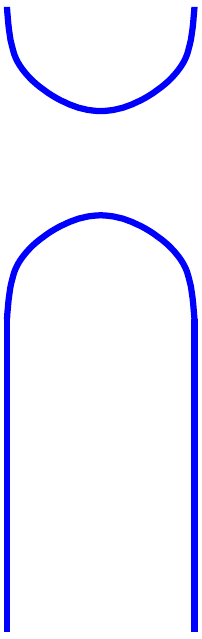}}}
    \quad
    \vcenter{\hbox{\includegraphics[height=\tempfigdim]{reider2-00}}}\ar[r]\ar[d]
    &
    \vcenter{\hbox{\includegraphics[height=\tempfigdim]{reider2-11_blue}}}
    \quad
    \vcenter{\hbox{\includegraphics[height=\tempfigdim]{reider2-01}}}\ar[d]
    \\ 
    \vcenter{\hbox{\includegraphics[height=\tempfigdim]{reider2-10_blue}}}
        \quad
    \vcenter{\hbox{\includegraphics[height=\tempfigdim]{reider2-10}}}\ar[r]
    &
    \vcenter{\hbox{\includegraphics[height=\tempfigdim]{reider2-10_blue}}}
        \quad
    \vcenter{\hbox{\includegraphics[height=\tempfigdim]{reider2-11}}}
    &&&
    \vcenter{\hbox{\includegraphics[height=\tempfigdim]{reider2-11_blue}}}
    \quad
    \vcenter{\hbox{\includegraphics[height=\tempfigdim]{reider2-10}}} \ar[r]
    &
    \vcenter{\hbox{\includegraphics[height=\tempfigdim]{reider2-11_blue}}}
    \quad
    \vcenter{\hbox{\includegraphics[height=\tempfigdim]{reider2-11}}}
  }
  \]
  \caption{The partial cube of resolutions of $\widetilde{\widetilde{\D}}$ for the RII
    invariance. We first cancel along the orange double arrows, then along the violet double arrows between the $``01**"$ resolutions, leaving a subquotient
    complex isomorphic to the complex for $D$.}
    \label{fig:Reid2-cube}
\end{figure}

    \item[(S-2)] This is clear by definition at the chain level. Since there are no differentials, this also holds in homology.
    
    \item[(S-3)] This holds at the chain level by definition. Furthermore, there are no cross-differentials (i.e.\ differentials are of the form $\phi \otimes \psi$) between the two tensor factors, so this also holds for homology. 

    \item[(S-4)] Let $\D$ and $\D'$ differ by a diagrammatic handle-attachement, and let $\Aux(\D)$ denote the set of decorations for the diagram $\D$. There is a canonical bijection $\phi: Aux(\D) \to Aux(\D')$, in the sense that $\D$ and $\D'$ differ only on a disk on which there are no crossings to decorate. For a 0- or 2-handle attachment, there is no additional auxiliary data $B$ for the cobordism. For a 1-handle attachment, the auxiliary data $B$ is consists of the oriented arc $\gamma$ along which surgery is performed. Let $-\gamma$ denote the oppositely oriented arc.
    
    Let $A_\alpha$ and $A_\beta$ denote decorations for a diagram $\D$. Let $\D_\gamma$ be the diagram obtained from $\D$ by adding an extra crossing as follows: the directed arc $\gamma$ is replaced with a crossing whose 0-resolution is $\D_{\gamma;0}$ and whose 1-resolution is $\D_{\gamma;1}$, decorated to agree with the orientation of $\gamma$. Define $\D_{-\gamma}$ similarly; the only difference is the decoration at the new crossing. Thus, $\D = \D_{\gamma; 0}$ and $\D' = \D_{\gamma;1}$ differ by a diagrammatic one-handle attachment. Associated to the additional auxiliary data of the oriented arc $\gamma$, there are chain homotopy equivalences $a_{\alpha;0}^\beta$ and $a_{\alpha;1}^\beta$ as defined in (S-1), and there exist chain maps $f_{\alpha, \gamma}$ and $f_{\beta, \gamma}$ as in the following diagram:
        \begin{center}
    \begin{tikzcd}
    \CK(\D_{\gamma;0}, A_\alpha) \arrow{r}{a_{\alpha;0}^\beta} \arrow[swap]{d}{f_{\alpha, \gamma}} 
        & \CK(\D_{\gamma;0}, A_\beta) \arrow{d}{f_{\beta, \gamma}} \\
    \CK(\D_{\gamma;1}, \phi(A_\alpha)) \arrow{r}{a_{\alpha; 1}^\beta} 
        & \CK(\D_{\gamma;1}, \phi(A_\beta)) \\
    \end{tikzcd}
    \end{center}
    We will show the above diagram commutes up to homotopy. 
    Observe that the left and right columns are the mapping cone complexes $\cone(f_{\alpha, \gamma}) \simeq \CK(\D_{\gamma}, A_\alpha)$ and $\cone(f_{\beta, \gamma}) \simeq \CK(\D_{\gamma} , A_\beta)$, respectively. Here we abuse notation by using $A_\alpha$ to denote the decoration on $\D_\gamma$ associated to adding the decoration induced by $\gamma$ on the additional crossing. By decoration invariance of the \SSS\ complex up to chain homotopy, as in (S-1) we have a chain homotopy equivalence $a_\alpha^\beta : \CK(\D_{\gamma}, A_\alpha) \to \CK(\D_{\gamma} , A_\beta) $. 
    
    \begin{claim}
    As a chain homotopy equivalence between mapping cones, the chain map $a_\alpha^\beta : \CK(\D_{\gamma}, A_\alpha) \to \CK(\D_{\gamma} , A_\beta) $ is of the form $
    		\begin{bmatrix}
    			a_{\alpha; 0}^\beta & a_{10} \\
    			a_{01} & a_{\alpha;1}^\beta
   		\end{bmatrix}.$
	\begin{proof}
		Write $a_\alpha^\beta = \begin{bmatrix} a_{00} & a_{10} \\ a_{01} & a_{11} \end{bmatrix}$ and consider the diagonal entries. The decoration changes between $A_\alpha$ and $A_\beta$ occur at only crossings that do not include the crossing associated to $\gamma$. Thus, the (Reidemeister II invariance) cancellation data used to define $a_\alpha^\beta$ are the union of the arrows used to define $a_{\alpha; 0}^\beta$ and $a_{\alpha;1}^\beta$. Any zigzag differentials induced by these cancellations must travel between resolutions where crossing $\gamma$ is 0-resolved to resolutions where crossing $\gamma$ is 1-resolved, and therefore do not affect either of the diagonal entries. 
	\end{proof}
    \end{claim}

    In particular, 
    $a_\alpha^\beta = \begin{bmatrix}
        a_{\alpha;0}^\beta & a_{10} 
         \\
        a_{01} & a_{\alpha;1}^\beta 
    \end{bmatrix}$ 
    is a chain map, where $a_{01}: \CK(\D_{\gamma;0}, A_\alpha) \to \CK(\D_{\gamma;1}, \phi(A_\beta))$. Thus, $a_\alpha^\beta \circ \partial_{\D_{\gamma, \alpha}} + \partial_{\D_{\gamma, \beta}} \circ a_\alpha^\beta =0 $, or 
    \[ 
    \begin{bmatrix}
        a_{\alpha;0}^\beta & a_{10} 
         \\
        a_{01} & a_{\alpha;1}^\beta 
    \end{bmatrix}
    \begin{bmatrix}
        \partial_{\gamma;0}^\alpha & 0 
         \\
        f_{\alpha,\gamma} & \partial_{\gamma;1}^\alpha 
    \end{bmatrix}
    +
    \begin{bmatrix}
        \partial_{\gamma;0}^\beta & 0 
         \\
        f_{\beta,\gamma} & \partial_{\gamma;1}^\beta 
    \end{bmatrix}
    \begin{bmatrix}
        a_{\alpha;0}^\beta & a_{10}  
         \\
        a_{01} & a_{\alpha;1}^\beta 
    \end{bmatrix} 
    = 
    0.
    \]
    The bottom left entry of the resulting matrix shows that the square commutes up to homotopy, via the homotopy $a_{01}$. 

    Then, by Lemma 3.6 of \cite{Saltz}, the maps $f_{A_\alpha, \phi(A_\alpha), \gamma}$ and $f_{A_\beta, \phi(A_\beta), \gamma}$ can be extended to a map $f_\gamma$ on the canonical representative of $\CK(\D)$.
    Hence we may write $f_{\gamma}$ (and  $f_{-\gamma}$) as a map $\CK(\D_{\gamma;0}) \to \CK(\D_{\gamma; 1})$, and it remains to show that $f_\gamma \simeq f_{-\gamma}$.

    We write 
    $H_\gamma = \begin{bmatrix}
        0 & 0 
         \\
        h_{\{ \gamma \}}  & 0 
    \end{bmatrix}$ 
    for the change of decoration isomorphism at the crossing corresponding to $\gamma$ (and $-\gamma$), where $h_{\{ \gamma \}} : \CK(\D_{\gamma;0}) \to \CK(\D_{\gamma; 1})$ is the map in Definition 3.6 of \cite{SSS}.
    We will first show that
    \[ \delta_{\ftot, \gamma} + \delta_{\ftot, -\gamma} = H_\gamma \circ \delta_{\ftot, \gamma} + \delta_{\ftot, \gamma} \circ H_\gamma \]

    Recall $\delta_{\ftot, \gamma} = \ddgamma+ \hgamma$, where $\ddgamma = \sum_i d_{i, \D_\gamma} $ and $\hgamma = \sum_i h_{i, \D_\gamma}$. 
    By the proof of Lemma 3.10 \cite{SSS}, 
    \[ \ddgamma + \dd_{\D_{-\gamma}} = H_\gamma \circ \ddgamma + \ddgamma \circ H_\gamma \]
    By Corollary 3.9 of \cite{SSS}, $H_\gamma$ commutes with $\hgamma$. Moreover, the map $\hgamma$ is independent of the choice of orientation of $\gamma$, and hence, $\hgamma = \h_{\D_{-\gamma}}$. Thus, 
    \[ \delta_{\ftot, \gamma} + \delta_{\ftot, -\gamma} = \ddgamma + \hgamma  +  \dd_{\D_{-\gamma}} + \h_{\D_{-\gamma}}=  H_\gamma \circ \delta_{\ftot, \gamma} + \delta_{\ftot, \gamma} \circ H_\gamma. \]
 This immediately implies that 
    \[ f_{ \gamma} + f_{ -\gamma} = h_{\{\gamma\}} \circ \delta_{\ftot, \gamma;0} + \delta_{\ftot, -\gamma;1} \circ h_{\{\gamma\}}  \]
hence $f_\gamma \simeq f_{-\gamma}$.

    \item[(S-5)] The Frobenius algebra is $\F[x]/(x(x+1))$, since the differential for a diagram with only one crossing is just $d_1 + h_1$ which is the Bar-Natan differential.
    
    \item[(S-6)] This condition follows from the proof that the \SSS\ theory $\CC_{tot}$ (and thus, $\CC_\ftot$) is invariant under Reidemeister moves. See Corollary 4.3 of \cite{SSS}. 
    
    \item[(S-7)] Resolution configurations that don't share any active circles won't give cross differentials between the two tensor factors. In the language of \SSS\ the differentials satisfy the \emph{extension rule} (see \cite[Lemma 3.3]{SSS}. We can view the disjoint union cobordism $\Sigma = \Sigma_0 \sqcup \Sigma_1$ as a composition of cobordisms $\Sigma_0 \sqcup \text{id}_{\D_1})$ and $\text{id}_{\D_0'} \sqcup \Sigma_1)$. By the extension rule, we have 
    \[\CK(\Sigma) \simeq (\text{id}_{\D_0'} \otimes \CK(\Sigma_1)) \circ ( \CK(\Sigma_0) \otimes \text{id}_{\D_1}) = \CK(\Sigma_0) \otimes \CK(\Sigma_1). \]
    
    \item[(S-8)]  
    Movie move 15 invariance follows from conicity and Proposition 6.2 of \cite{Saltz}. One can also check this directly, by comparing the identity map to a birth followed by a merge; dually, compare the identity map to a split followed by a death. (Recall that the birth map sends $x \mapsto x \otimes v_+$.)
    
    Handleswap invariance follows from conicity and Proposition 6.1 of \cite{Saltz}. Both of the cobordism maps $h_{\gamma'} \circ h_{\gamma}$ and $h_{\gamma} \circ h_{\gamma'}$ are given by the sum of all the differentials in the complex $\D_{\gamma \gamma'} = \D_{\gamma' \gamma}$ from the $00$-resolution to the $11$-resolution. 
\end{enumerate}
\end{proof}
\end{proposition}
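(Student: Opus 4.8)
The plan is to verify that $\CK(\D) = (\CC_{ftot}, \partial_{ftot})$ satisfies Saltz's axioms (S-1)--(S-8) together with conicity, and then invoke Saltz's theorem that conic strong Khovanov--Floer theories are functorial \cite{Saltz}. Conicity is immediate from the \SSS\ definition of the saddle map at a crossing \cite[Definition 5.2]{SSS}: that map is set up precisely so that $\CK(\D)$ is the mapping cone of the handle-attachment map $h_\gamma \colon \CK(\D_0) \to \CK(\D_1)$. Several of the remaining axioms are also routine. Axiom (S-2) holds because a crossingless unknot diagram carries no differential and its underlying group is the Khovanov group; (S-3) and (S-7) follow from the \SSS\ extension rule \cite[Lemma 3.3]{SSS}, which guarantees there are no cross-differentials between the two tensor factors attached to a disjoint union, so a disjoint-union cobordism factors as a composition of the two component cobordisms tensored with identities; (S-5) holds because the total differential for a one-crossing diagram is $d_1 + h_1$, the Bar--Natan differential, whose Frobenius algebra is $\F[x]/(x(x+1))$; (S-6) follows from Reidemeister (in particular planar-isotopy) invariance of the \SSS\ complex \cite[Corollary 4.3]{SSS}; and (S-8), handleswap invariance together with movie move 15, follows formally from conicity via \cite[Propositions 6.1, 6.2]{Saltz}, or by directly comparing the relevant cobordism maps (e.g.\ the identity versus a birth followed by a merge).

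The two axioms requiring genuine work are (S-1) and (S-4), both concerning auxiliary data; here the auxiliary data at a diagram is the choice of \emph{decoration} at each crossing in the sense of \cite{SSS}. For (S-1) I would first treat a single decoration change: replacing a crossing $c$ by a Reidemeister II triple $\tilde\D$ with its two outer crossings carrying opposite decorations, the \SSS\ cancellation lemma \cite[Corollary 4.3, Proposition 4.7]{SSS} identifies $\CK(\tilde\D)$ with each of $\CK(\D, A_-)$ and $\CK(\D, A_+)$ by cancelling acyclic \emph{direct} summands, and composing produces the homotopy equivalence $a_-^+$. For a general pair of decorations one defines $a_\alpha^\beta$ as a composite of single-crossing-change equivalences; the substance is independence of the order of the crossings, which is exactly what makes $\{\CK(\D,A_\alpha)\}$ a transitive system. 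By symmetry it suffices to treat decorations differing at two crossings: one expands both into Reidemeister II triples, obtaining a partial cube of resolutions on $2^4$ vertices, and checks that performing the two rounds of cancellation in either order identifies $\CK(\D,A_{-,-})$ with the same subquotient complex, sitting at a fixed vertex of the $4$-cube.

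For (S-4), the bijection $\phi\colon\Aux(\D)\to\Aux(\D')$ is canonical because $\D$ and $\D'$ differ only on a crossingless disk, and the only extra auxiliary data is the oriented surgery arc $\gamma$ of a $1$-handle. I would first verify that the decoration-change equivalences commute up to homotopy with the handle map, so that the maps assemble to a map $f_\gamma$ on canonical representatives: writing $\D_\gamma$ for the diagram with $\gamma$ promoted to a crossing, a decoration change at any crossing \emph{other than} $\gamma$ leaves the diagonal blocks of the cancellation data untouched, so the change-of-decoration map is block lower triangular on the mapping cone and its off-diagonal block is the desired homotopy. It then remains to show $f_\gamma \simeq f_{-\gamma}$, the independence of the orientation of $\gamma$. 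This is the one place where \SSS-specific input is essential: by \cite[proof of Lemma 3.10]{SSS} the Szab\'o pieces satisfy $\ddgamma + \dd_{\D_{-\gamma}} = H_\gamma\circ\ddgamma + \ddgamma\circ H_\gamma$, where $H_\gamma$ is the decoration-change map at the new crossing, and by \cite[Corollary 3.9]{SSS} the $h_i$ pieces commute with $H_\gamma$ and are insensitive to the orientation of $\gamma$, so $\hgamma = \h_{\D_{-\gamma}}$; adding these gives $\delta_{\ftot,\gamma} + \delta_{\ftot,-\gamma} = H_\gamma\circ\delta_{\ftot,\gamma} + \delta_{\ftot,\gamma}\circ H_\gamma$, which restricts to a chain homotopy between $f_\gamma$ and $f_{-\gamma}$.

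I expect the main obstacle to be the transitivity bookkeeping in (S-1): tracking which acyclic summands are cancelled, and in which order, inside the $4$-dimensional partial cube of resolutions, and confirming that the two cancellation orders terminate at the same vertex with the same induced subquotient complex. Once this is in hand, the rest reduces either to citing \SSS's Reidemeister and decoration-invariance and extension-rule lemmas or to Saltz's general theory of conic strong Khovanov--Floer theories.
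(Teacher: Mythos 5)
Your proposal follows the same structure as the paper's proof: conicity from the \SSS\ saddle-map definition, (S-2), (S-3), (S-5)--(S-8) by the same routine observations and citations, (S-1) via the single-decoration-change Reidemeister II gadget together with the $4$-dimensional partial-cube transitivity check, and (S-4) via the mapping-cone block decomposition plus the identity $\delta_{\ftot,\gamma} + \delta_{\ftot,-\gamma} = H_\gamma\circ\delta_{\ftot,\gamma} + \delta_{\ftot,\gamma}\circ H_\gamma$ derived from \cite[Lemma 3.10, Corollary 3.9]{SSS}. The only cosmetic difference is that you assert the change-of-decoration map between mapping cones is block lower triangular, whereas the paper merely identifies its diagonal blocks and lets the bottom-left entry of the chain-map equation supply the needed homotopy; either way the argument goes through identically.
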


\bibliographystyle{alpha}
\bibliography{biblio}

\end{document}